\newtheorem{prop}{Proposition}
\theoremstyle{plain}
\theoremstyle{definition}
\algrenewcommand\textproc{}
\DeclareMathOperator*{\argmax}{arg\,max}
\newlist{steps}{enumerate}{1}
\setlist[steps, 1]{label = Step \arabic*:}
\providecommand{\keywords}[1]{\textbf{\textit{Keywords---}} #1}
\def\lc{\left\lceil}   
\def\rc{\right\rceil}
\newcommand{\Tau}{\mathrm{T}}
\begin{document}


\title{Demand-Adaptive Route Planning and Scheduling for Urban Hub-based High-Capacity Mobility-on-Demand Services\\
\large \textit{Submit to ISTTT 24}}

\author[1]{Xinwu Qian}
\affil[1]{Lyles School of Civil Engineering, Purdue University\\qian39@purdue.edu}

\author[2]{Jiawei Xue}
\affil[2]{Lyles School of Civil Engineering, Purdue University\\xue120@purdue.edu}

\author[3]{Satish V. Ukkusuri}
\affil[3]{Lyles School of Civil Engineering, Purdue University\\sukkusur@purdue.edu}

\date{}
\maketitle

\begin{abstract}
In this study, we propose a three-stage framework for the planning and scheduling of high-capacity mobility-on-demand services (e.g., microtransit and flexible transit) at urban activity hubs. The proposed framework consists of (1) the route generation step to and from the activity hub with connectivity to existing transit systems, and (2) the robust route scheduling step which determines the vehicle assignment and route headway under demand uncertainty. Efficient exact and heuristic algorithms are developed for identifying the minimum number of routes that maximize passenger coverage, and a matching scheme is proposed to combine routes to and from the hub into roundtrips optimally. With the generated routes, the robust route scheduling problem is formulated as a two-stage robust optimization problem. Model reformulations are introduced to solve the robust optimization problem into the global optimum. In this regard, the proposed framework presents both algorithmic and analytic solutions for developing the hub-based transit services in response to the varying passenger demand over a short-time period. To validate the effectiveness of the proposed framework, comprehensive numerical experiments are conducted for planning the HHMoD services at the JFK airport in New York City (NYC). The results show the superior performance of the proposed route generation algorithm to maximize the citywide coverage more efficiently. The results also demonstrate the cost-effectiveness of the robust route schedules under normal demand conditions and against worst-case-oriented realizations of passenger demand. 
\end{abstract}
\keywords{High-capacity mobility-on-demand, demand adaptive, route generation, route scheduling, robust optimization}

\section{Introduction}
High-capacity public transit such as bus and metro serves as an affordable mobility solution to urban commuters. When properly planned and operated, public transit may significantly reduce commuters' dependence on private vehicles and play a vital role in a sustainable mobility system in dense populated urban areas~\cite{zhang2010can}. Unfortunately, the configuration and infrastructure of existing fixed-route public transit systems may no longer be attractive to urban travelers due to their fast-changing mobility needs. One notable evidence is the recent rise of the ride-hailing industry and subsequent loss of public transportation ridership. For instance, in New York City (NYC), the bus ridership declined by 1.3\%, 5.1\%, and 5.8\% from years 2016 to 2018~\cite{nycMTA2018} while the number of for-hire vehicles (FHV) trips had increased by 300\% during the time~\cite{nycfhv2019}. This leads to more congestion, emissions, and energy consumption~\cite{qian2020impact} in cities, and poses a significant need for a better balance and connectivity between emerging mobility options and public transportation. 

One challenge associated with the planning of the transit system is the trade-off between accessibility and service coverage. It is often impractical for high-capacity urban transit such as fixed-route buses to provide door-to-door service to meet the varying mobility needs of the general public. Nevertheless, with flexible transit of adaptive service routes and schedules, it is possible to offer a more effective transit service for serving passengers sharing similar origins and destinations~\cite{malucelli1999demand}. One appealing and applicable scenario is the public transit rooted in major urban activity hubs such as shopping malls, financial districts, railway stations, and airports. These locations have high passenger volume, and it is likely to find travelers to and from these hubs of similar travel routes. We show in Figure~\ref{fig:cum_arrival_departure} the empirical evidence that supports the applicability of hub-based transit service, where the cumulative pickups and drop-offs of both taxis and FHVs are plotted at the taxi-zone level. In particular, we can verify that around 20\% of the taxi zones account for over 60\% of total pickups and drop-offs for both taxis and FHV. Moreover, we observe from the data that over 25\% of yellow taxi trips and 15\% of FHV trips have either the pickup or drop-off at one of the airports in NYC during peak hours. These observations reveal huge opportunities for travel mileage reduction if individual trips can be optimally consolidated, and motivate us to investigate the design of the hub-based high-capacity mobility-on-demand (HHMoD) service as a demand-adaptive transit system at urban activity hubs. The HHMoD is a special case of the ridesharing scheme, and the main idea is to satisfy passenger demand at activity hubs with high capacity vehicles that use optimized routes with minimum deviation from the shortest travel routes. But different from the ad-hoc ridesharing with fewer passengers, the high capacity feature and the higher and more stable passenger demand at activity hubs allow the routes and schedules of the HHMoD to be planned proactively. This avoids the unnecessary waiting for the realization of requests from many passengers and is also more attractive to potential passengers as they will be informed of the scheduled services ahead of time. And the routes and schedules of the HHMoD are adaptive since they can be reoptimized from time to time (e.g., every 30 minutes or one hour) based on the predicted future demand to ensure the planned services are flexible and aligned with the time-varying mobility needs. 
\begin{figure}[ht!]
    \centering
    \subfloat[Drop-offs]{\includegraphics[width=0.3\linewidth]{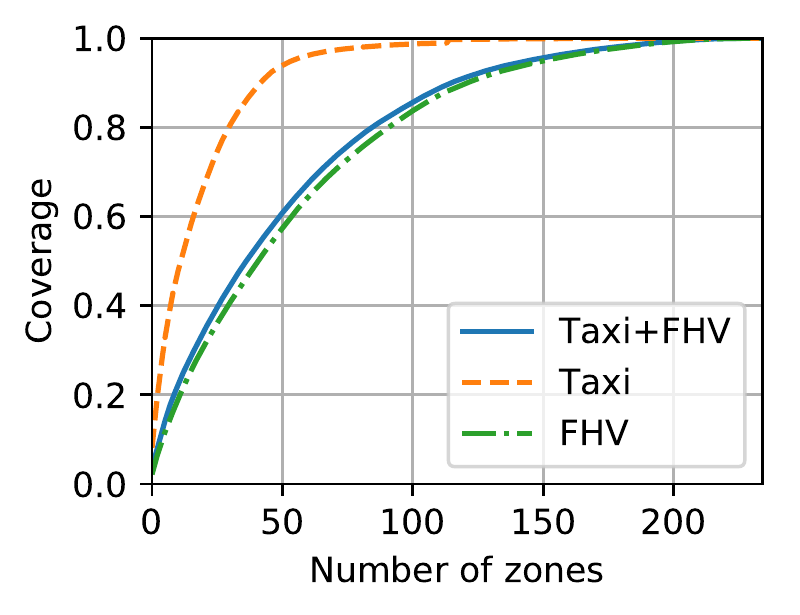}}
    \subfloat[Pick-ups]{\includegraphics[width=0.3\linewidth]{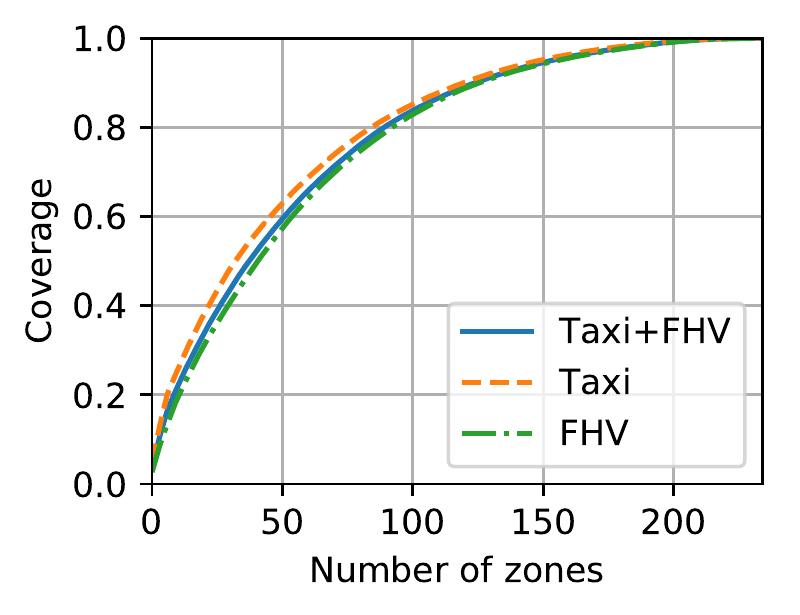}}
    \caption{Zonal-level taxi and FHV pickups and drop-offs in NYC. There are 263 taxi zones.}
    \label{fig:cum_arrival_departure}
\end{figure}

To promote the idea of the HHMoD, this study concerns the modeling framework for the route planning and scheduling of the HHMoD over a short time period, which can be regarded as a novel variant of the classical transit route network design problem (TRNDP). There exists broad literature on TRNDP and readers may refer to~\cite{transitglobal,trn2009,trn2013} for comprehensive reviews of related studies. In general, the TRNDP is intrinsically difficult and the analytical framework cannot be applied to solve real-world problems. In this regard, heuristic approaches constitute the mainstream for solving TRNDP, which compromises sub-problems of the design of operation routes and the scheduling of transit service~\cite{trn2009}. There are two main directions to tackle the sub-problems. The first direction divides the process of transit planning into route generation and headway optimization. Most of the TRNDP studies followed this direction and various methods for route generation were proposed. For instance, Lampkin and Saalmans~\cite{lampkin1967} proposed the route generation method that starts with random control points and consecutively adds stops between these points. Silman et al.~\cite{Silman1974} divided the city into zones and used a heuristic algorithm for generating routes that minimize walking time. Recently, Cipriano et al.~\cite{cipriani2012} used Genetic Algorithms (GA) to generate candidate route sets, which were evolved according to the fitness measure of travel time. Nikolic and Teodorovic~\cite{bee2013} developed the route generation process based on Bee Colony Optimization, where random routes got improved by modeling each route as a bee and modifying its route with a pheromone objective function. The second direction follows an iterative route generation and improvement approach where route and headway are jointly determined in an iterative process. Studies that follow the second direction include Drezner et al.~\cite{drezner2002}, Fan and Wei~\cite{fan2004optimal}, Chakroborty and Partha~\cite{genetic2003}, where GA was widely adopted in these studies to create routes and evolve the routes over iterations, and travel links or control points were chosen as genes which may change during the search of an optimal solution. 
    

It can be seen from the literature that, due to the difficulties in solving TRNDP exactly, heuristic and metaheuristic approaches are heavily used to obtain the candidate route set and determine the frequency setting. Nevertheless, there is no guarantee on the solution quality with these approaches and the complicated parameter tuning is often required in order to obtain a satisfactory solution. Moreover, even for a fine-tuned heuristic method, the performance may vary significantly with respect to different passenger demand realizations, which creates a critical barrier for using such a framework to solve the time-varying flexible route transit design problem. We also note that there are several recent studies that investigated the design of flexible route transit and the demand-adaptive transit, such as the simulation model for demand responsive transit design~\cite{quadrifoglio2008simulation}, the demand-responsive system with compulsory and optional stops~\cite{crainic2012designing}, the flexible transit for low demand areas in grid networks~\cite{nourbakhsh2012structured}, the semi-flexible transit systems~\cite{errico2013survey}, the feeder transit with fixed and flexible routes~\cite{li2010feeder}, the point-to-point high coverage transit system~\cite{cortes2002design}, and the paired-line hybrid transit with radial route structure~\cite{chen2017analysis,luo2020paired} and the flexible line length bus system~\cite{pei2019real}. There exist three major gaps based on the results from the previous studies. First, as the flexible route service was primarily proposed for sparsely populated areas, the effectiveness of the existing framework remains unknown if implemented in populated urban areas. Second, existing studies rely on the simulation framework~\cite{quadrifoglio2008simulation,cortes2002design}, heuristics~\cite{li2010feeder} or metaheuristics~\cite{chen2017analysis} for obtaining the optimal design due to the high computational cost. Finally,
the demand-responsive mobility services were mainly investigated under deterministic or revealed demand. But an effective demand-responsive system needs to account for future passenger realizations under uncertainty. To the best of our knowledge, an analytical framework for the route generation and robust route scheduling of urban mobility-on-demand services has yet been established.  

In light of the fundamental challenges and the practical usefulness of the HHMoD for serving the mobility needs in populated urban areas, this study establishes the framework for operating HHMoD that is tailored to the spatiotemporally varying mobility needs of urban travelers based on the real-time trip and traffic data. The primary goal of the planned HHMoD is to find the set of routes that maximize passenger coverage and identify the optimal schedules of the routes that minimize the worst-case oriented operation cost under demand uncertainties. Specifically, we take advantage of the hub-based structure and propose both exact and heuristic algorithms that generate the round-trip HHMoD routes with maximum passenger coverage. The solution algorithms are further extended to incorporate connections to existing public transportation systems. Given the candidate routes, a two-stage robust optimization model is then developed to schedule the vehicle assignment and route headway optimally. The computational performances and the effectiveness of the proposed HHMoD solution are demonstrated with large-scale numerical experiments at the JFK airport in NYC with 200 candidate stops and connectivity to the NYC subway system. 


The rest of the study is organized as follows. The second section gives an overview of the research framework for developing the HHMoD system. The third section discusses the route generation problems and
proposes exact and heuristic algorithms for obtaining the candidate route
set. A heuristic shortest-path based route generation mechanism in the literature is also introduced as the benchmark algorithm. Afterward, we present the route combination model for joining planned routes to and from the hub. The fourth section develops the two-stage robust optimization model in light of the worst passenger realization for the optimal scheduling given the generated route set. The fifth presents comprehensive numerical experiments on planning the HHMoD service at JFK airport in NYC. Finally, we conclude our study with major findings and future directions in the last section.
\section{Problem overview}
\begin{figure}[h!]
    \centering
    \includegraphics[width=0.8\linewidth]{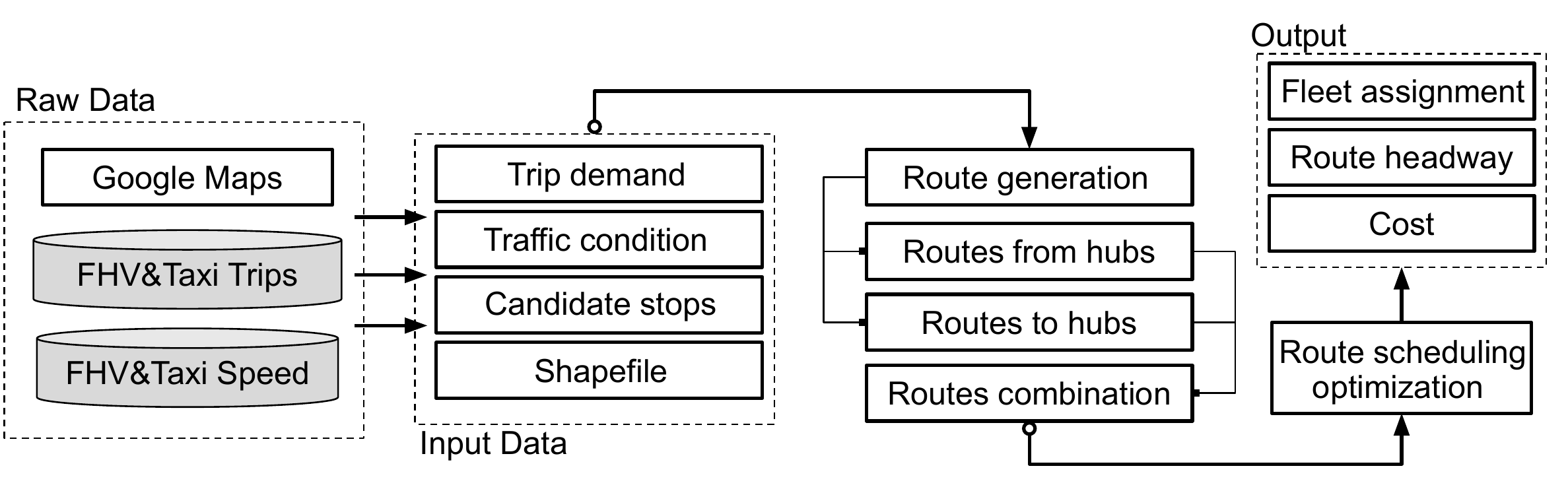}
    \caption{HHMoD Planning framework}
    \label{fig:framework}
\end{figure}
In this study, we follow the transit planning framework as summarized in~\cite{trn2009} by first generating the set of candidate routes and then developing the optimal service frequency under demand uncertainty. We propose a three-stage framework for developing the HHMoD system, as shown in Figure~\ref{fig:framework}. Our framework is different from the route generation step in existing studies - we propose an innovative route generation algorithm that is objective-driven (maximizing passenger coverage). One may choose to find a predefined number of routes with maximum passenger coverage or find the minimum number of routes that cover all passenger demand. Besides, the route generation algorithm offers additional flexibility for connecting the HHMoD system to existing transportation facilities. The overall framework takes the set of candidate stop locations, the trip demand level, and road traffic conditions as the inputs. It outputs the set of operating routes, the assigned number of vehicles to each route, and each route's corresponding operating frequency (headway). Here we briefly summarize the three major steps for the development of the HHMoD system:
\begin{enumerate}
    \item In the first stage, we develop the route generation algorithm which builds the set of $K$ \textbf{feasible} candidate routes that maximize the coverage of potential passengers, where $K$ is the user-specified number. A route is considered as feasible if the trip time of all passengers is no larger than $\lambda$ times the minimum trip time by taking alternative travel modes. Given the candidate stops, trip demand and road traffic condition, the route generation algorithm will first create two sets of feasible routes for incoming and outgoing trips at activity hubs respectively.
    \item The second step then identifies the optimal combination of the two sets of routes to and from the hub. The route combination step focuses on balancing the passenger demand between two directions and is subject to the vacant trip distance constraint. The constraint helps to avoid excessive empty miles for connecting two distant routes and will force the vehicle to return to the hub if there are no matching routes in the opposite direction within the distance constraint.
    \item The integrated round trip routes will then serve as the input for the third stage problem, which is to optimize the vehicle assignment and route service frequency under demand uncertainty. For the route scheduling optimization problem, our objective is to minimize the weighted combination of the total operation cost, passenger waiting time together with the penalty for unsatisfied passenger demand. A two-stage robust optimization problem is developed to identify the worst-case oriented optimal fleet and headway setting. 
\end{enumerate}

As the problem requires the information on candidate stop locations and the associated demand level, such knowledge can be directly mined from large-scale GPS data~\cite{chen2013b}. And the real-time travel time information can also be obtained from various sources such as Google Maps. With the availability of this information, the proposed method can be used to deploy the HHMoD service over a short-time period (e.g., 30 minutes or an hour interval) based on predicted demand or the distribution of future passenger demand from historical observations. This offers great flexibility to align the available fleet resources with the passengers' time-varying mobility needs, and the framework may, therefore, advance the planning and operation of urban MoD with high capacity vehicles. We next present the details of the three-stage framework. 

\section{Route generation}
\subsection{Problem description}
As the first step of the HHMoD development, the route generation problem focuses on identifying the set of $K$ candidate routes that yield the highest possible passenger coverage. We term this target set of $K$ routes as the K maximum coverage routes (K-MCR). We consider the study area consists of $|N|$ candidate stops and we denote $G(N,E,Q,W,\lambda)$ as the network representation of the study area, with $N$ being the set of stops, $e_{i,j}\in E$ denoting the shortest travel path between stop $i$ and stop $j$, $q_i\in Q$ being the potential passenger demand (to hub or from hub) at stop $i$, and $w_{i,j}\in W$ being the weight (shortest travel time) on edge $e_{i,j}$. We make the following reasonable assumptions to conceptualize the routing generation problem:
\begin{enumerate}
    \item We assume that the HHMoD is a station-based system where the candidate stops and the associated demand level at each stop is given.
    \item The travel time between two stops are known (e.g., obtained from historical or real-time traffic data). 
    \item We consider that passengers arriving at station $i$ are covered by the HHMoD if there exists a route that stops at $i$\label{assumption: disjoint}.  
    \item The HHMoD service will use the shortest travel time path when travelling between two stops.
    \item The shortest travel time on different segments between stations should satisfy the triangular inequality constraint\label{assumption:tria}. 
    \item Passengers using the HHMoD are time sensitive. 
\end{enumerate}
The first four assumptions set the scope of the route generation problem with candidate stop locations, known passenger demand and travel time information. The fifth assumption imposes a practical constraint on travel time, $t_{i,j}+t_{j,k}>t_{i,k}$, so that the route travel time should strictly increase with additional intermediate stops. And the sixth assumption suggests that passengers may use alternate travel mode with trip time $t^m$ (such as subway and taxis) if the travel time of the proposed HHMoD service exceeds a certain threshold. This motivates us to take the route travel time into consideration while deriving the MCR, where we can translate the time constraints for passengers who arrive at stop $i$ using route $k$ as:
\begin{equation}
    t_{i}^k\leq \lambda \min_m  t_{i}^m 
    \label{eq:time_constraint}
\end{equation}
so that the planned route $k\in \mathcal{R}_K$ for passengers at stop $i$ should result in the travel time no greater than $\lambda$ times that of the alternative modes of lowest travel time. As an example, with $\lambda=1.5$, passengers will not use the planned HHMoD service if the travel time is 50\% higher than the travel time of hailing a taxi. In this regard, $\lambda$ is a hyper-parameter of the route generation model and can be adjusted to account for the willingness to wait for potential passengers.  

Based on the above constraints, the K-MCR generation problem can be mathematically formulated as:
\begin{equation}
\begin{aligned}
    \text{maximize} & \sum_{i=1}^{|N|} \delta_i^k s_kq_i\\
    \text{s.t.}\quad\quad  t_i^k&\leq \lambda \text{min}  t_{i}^m\, ,\forall i,k\\
    \sum_{k} s_k&\leq K, s_k\in\{0,1\}
\end{aligned}
\label{eq:opt_route}
\end{equation}
where $\delta_i^k$ and $s_k$ are both indicator variables. $s_k=1$ if route $k$ is selected for the K-MCR and 0 otherwise. $\delta_i^k$ takes the value of 1 if stop $i$ is served by route $k$.

Problem~\ref{eq:opt_route} is NP-hard. A straightforward proof is that we can reduce the longest path problem, a known NP-complete problem, to Problem~\ref{eq:opt_route} under $K= 1$ by setting $\lambda$ to be an arbitrary large value. With such a $\lambda$, all travel time constraints are directly satisfied and $K=1$ is a special case of the original problem. The problem becomes more difficult with active travel time constraints and $K>1$. And it is impractical to solve it as an optimization problem with commercial solvers as the explicit formulation may require the enumeration of all candidate routes. In the following sections, we establish exact and heuristic solution algorithms to solve the K-MCR generation problem. 

\subsection{Exact algorithm}
While Problem~\ref{eq:opt_route} represents a difficult combinatorial problem, we can still solve it efficiently by utilizing the topological properties of the network and specific hub-based nature of the problem.

First, denote $\mathcal{R}_K$ as an optimal solution of problem~\ref{eq:opt_route} and the route $r_i\in \mathcal{R}_K$ as the ordered set of stops it traverses through, e.g. $r_i=\{n_1,n_2,n_3,...n_{L_i}\}$ and $n_1,n_2,...n_{L_i}\in N$. Based on Assumption~\ref{assumption: disjoint}, $\mathcal{R}_K$ can be reduced to $\mathcal{R}_K'$ where for any $r_i,r_j\in \mathcal{R}_K'$, we have $r_i\bigcap r_j=\emptyset$. And we term $\mathcal{R}_K'$ as the mutually disjoint K-MCR. To see this, let $r_i\bigcap r_j=S \neq \emptyset$ and we may remove the set of nodes in $S$ from either $r_i$ or $r_j$. This step will not affect the coverage of passengers as the removed stops from one route is still served by the other route. In addition, the route after the stop removal is still feasible since the route travel time strictly decreases with the removal of an intermediate stop following the triangular inequality in Assumption~\ref{assumption:tria}. Based on this property, we have the subsequent proposition that allows for solving the K-MCR generation problem by sequentially identifying the highest passenger coverage route and generating the mutually disjoint MCRs.
\begin{prop}
Let $r_1,r_2,...,r_K$ be the set of mutually disjoint routes generated as sequentially identified MCR, then $\mathcal{R}_K=\{r_1,r_2,...,r_K\}$ is the K-MCR set that gives the highest passenger coverage. 
\end{prop}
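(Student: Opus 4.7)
The plan is to prove the proposition by induction on $K$, exploiting the mutually-disjoint reduction established just before the statement. The base case $K=1$ is immediate since $r_1$ is, by definition, the feasible single route of maximum passenger coverage, hence the optimal $1$-MCR. For the inductive step, assume the result for $K-1$, and let $\mathcal{O}=\{o_1,\dots,o_K\}$ be any optimal mutually disjoint $K$-MCR. I would project $\mathcal{O}$ onto the residual problem on the stop set $N\setminus r_1$ by setting $\tilde o_j = o_j\setminus r_1$. The triangular-inequality argument invoked right before the proposition shows that feasibility is preserved under stop deletion, so each $\tilde o_j$ is a feasible route; moreover the collection $\{\tilde o_1,\dots,\tilde o_K\}$ is mutually disjoint and entirely disjoint from $r_1$. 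By construction, $(r_2,\dots,r_K)$ is precisely the greedy $(K-1)$-MCR sequence produced by restarting the greedy on $N\setminus r_1$, and the inductive hypothesis applies directly to this residual problem.

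Applying the inductive hypothesis to the best size-$(K-1)$ sub-family of the $\tilde o_j$'s, i.e.\ dropping an appropriately chosen minimizer $j^\star$, would yield $\sum_{i\geq 2}\mathrm{cov}(r_i)\geq \sum_{j\neq j^\star}\mathrm{cov}(\tilde o_j)$. Adding $\mathrm{cov}(r_1)$ to both sides and decomposing $\mathrm{cov}(\mathcal{O})=\sum_j[\mathrm{cov}(\tilde o_j)+\mathrm{cov}(o_j\cap r_1)]$, the claim $\sum_i\mathrm{cov}(r_i)\geq \mathrm{cov}(\mathcal{O})$ reduces to the single inequality $\mathrm{cov}(r_1)\geq \mathrm{cov}(o_{j^\star})+\sum_{j\neq j^\star}\mathrm{cov}(o_j\cap r_1)$. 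Two ingredients are available to combine: the MCR property at step~1, which gives $\mathrm{cov}(r_1)\geq \mathrm{cov}(o_j)$ for every $j$, and the mutual disjointness of the $o_j$'s, which yields $\sum_j\mathrm{cov}(o_j\cap r_1)\leq \mathrm{cov}(r_1)$.

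The main obstacle is exactly this last inequality. A single application of the MCR property absorbs only one of the $\mathrm{cov}(o_j)$ terms, while the residual overlaps $\sum_{j\neq j^\star}\mathrm{cov}(o_j\cap r_1)$ may collectively be large if $r_1$ intersects several of the optimal routes. The resolution I would pursue is to pick $j^\star$ as the index maximizing $\mathrm{cov}(o_j\cap r_1)$, and then to use downward closure under stop deletion to extract from $r_1$ an explicit feasible witness sub-route on $r_1\setminus\bigcup_{j\neq j^\star}o_j$ that, when swapped for $o_{j^\star}$ in $\mathcal{O}$, produces another mutually disjoint $K$-family; optimality of $\mathcal{O}$ then supplies the needed magnitude bound. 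Rigorously executing this exchange---verifying that the substituted family remains mutually disjoint and feasible while avoiding double-counting across the overlap regions---is the delicate step, and it is where both the triangular-inequality closure and the hub-based structure of the problem are genuinely used.
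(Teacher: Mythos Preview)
Your exchange argument produces the inequality in the wrong direction. Swapping $o_{j^\star}$ for the sub-route $r_1':=r_1\setminus\bigcup_{j\neq j^\star}o_j$ inside $\mathcal{O}$ and invoking optimality of $\mathcal{O}$ gives $\mathrm{cov}(r_1')\le\mathrm{cov}(o_{j^\star})$, i.e.\ $\mathrm{cov}(r_1)-\sum_{j\neq j^\star}\mathrm{cov}(o_j\cap r_1)\le\mathrm{cov}(o_{j^\star})$, which is precisely the \emph{reverse} of the bound you isolated as needed. No choice of $j^\star$ repairs this, and the ``hub-based structure'' supplies nothing beyond the downward closure under stop deletion that you have already exploited.

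This is not a fixable technicality: under the paper's assumptions the proposition is false as stated. The procedure is exactly greedy maximum coverage over a downward-closed family of feasible stop-sets, and greedy is only a $(1-1/e)$-approximation there. A concrete instance inside the model: hub $H$, stops $A,B,C,D$ with $q_A=q_B=3$, $q_C=q_D=2$; take $w_{H,i}=10$ for each $i$, $w_{AB}=w_{AC}=w_{BD}=3$, $w_{CD}=5$, $w_{AD}=w_{BC}=5.9$, and $\lambda=1.3$ (all strict triangle inequalities hold). The feasible multi-stop routes are exactly those visiting $\{A,B\}$, $\{A,C\}$, or $\{B,D\}$; $\{C,D\}$ is infeasible and no three-stop route is feasible. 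With $K=2$, the sequential MCR picks $\{A,B\}$ (coverage $6$) and then a singleton from $\{C,D\}$ (coverage $2$), total $8$; the disjoint pair $\{A,C\},\{B,D\}$ achieves $10$. The paper's own induction has the same defect in cruder form: the step from $N-1$ to $N$ tacitly assumes that some optimal $N$-MCR must contain the greedy $(N-1)$-MCR, which is the very claim at issue and already fails in this example.
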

\begin{proof}
The proposition can be proved by induction. First, we verify that the proposition holds for $K=1$. Assuming that $\mathcal{R}_{N-1}$ is the (N-1)-MCR, and $r_N$ is the mutually disjoint MCR identified at step $N$ that shares no overlapping stops with the routes in $\mathcal{R}_{N-1}$. Let us consider that $\mathcal{R}_{N-1}\bigcup \{r_N\}$ is not the N-MCR, then there must be a route $r'\neq r_N$ that yields higher passenger coverage than $r_N$. But this contradicts the fact that $r_N$ is a mutually disjoint MCR. Hence $\mathcal{R}_{N}\bigcup\{r_N\}$ gives the N-MCR.\\
\end{proof}

On the other hand, finding the mutually disjoint route with maximum passenger coverage itself is still nontrivial. In the worst case, one may have to enumerate all possible routes in the network and select the route with the highest passenger coverage. However, the time constraint in Problem~\ref{eq:opt_route} introduces a promising direction for reducing the search space for finding MCR. In particular, starting from a stop $i$, the MCR can be obtained by tracking the potential passenger coverage of the set of stops that are reachable to and from the hub. And at stop $i$, we define \textbf{potential passenger coverage} of a stop $j$ as the largest possible passenger demand that can be covered by extending the route from stop $i$ to stop $j$. Let $\Gamma(i)$ denote the operator for the maximum potential passenger coverage of a route starting from location $i$, we can express the MCR generation as a dynamic programming problem following:
\begin{equation}
    \Gamma(i)=q_i+ \text{max}_{j\in Reach(i)} \Gamma(j)
    \label{eq: dp_equation}
\end{equation}
In the equation, $Reach(i)$ represents the set of stops that are reachable from stop $i$, where $j$ is said to be \textbf{reachable} from $i$ if the travel time of the extended route from $i$ to $j$ will not exceed $\lambda$ times the corresponding shortest travel time as shown in Problem~\ref{eq:opt_route}. Instead of enumerating all routes and select the best one, we can therefore build the solution algorithm for sequentially identifying
mutually disjoint MCR following equation~\ref{eq: dp_equation}. We note that the MCR generation can still be expensive with the worst-case complexity of $\mathcal{O}((e-1)N!)$ as shown in Proposition~\ref{prop:complexity}. However, in practice, the $Reach(i)$ may be significantly smaller in each step due to the travel time constraints and the exact MCR can still be generated efficiently with reasonable values of $\lambda$ (e.g., $\lambda<=1.4$ which represents a 40\% deviation from the fastest alternate mode). We also observe the exact algorithm to generate K-MCR efficiently in our NYC case study. As a consequence, we can tackle the K-MCR problem by utilizing the hub-based structure and the travel time constraints, and the complete algorithm with details for the potential calculation and reachable set identification are summarized in Algorithm~\ref{alg:det}. The algorithm starts with all stops in the study area, finds the MCR by constructing the recursion tree based on the reachable set, and then sets the potential of the stops that are covered in the MCR to 0. This ensures that the routes generated each step are mutually disjoint MCRs. The algorithm terminates when all K routes are generated or can be terminated early if all stops have been visited. And the algorithm can be applied for passengers to and from the hubs separately, with the only difference being that the corresponding travel cost $w_{i,j}$ to be switched to $w_{j,i}$ and that the $t_j^m$ to be set to minimal travel time to and from the hub respectively. 

\begin{prop}
The worst case time complexity for identifying the MCR with $N$ stops is $\mathcal{O}((e-1)N!)$.
\label{prop:complexity}
\end{prop}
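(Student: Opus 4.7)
The plan is to bound the number of nodes in the recursion tree that Algorithm~\ref{alg:det} explores when evaluating $\Gamma(i)=q_{i}+\max_{j\in Reach(i)}\Gamma(j)$, and then invoke the Maclaurin series for $e$ to obtain the stated constant.

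\textbf{Step 1: reduce to a worst-case instance.} First I would argue that the worst case is realized when $\lambda$ is so large (or the network so ``symmetric'' in travel times) that the constraint~\eqref{eq:time_constraint} is never binding and every unvisited stop lies in $Reach(\cdot)$ at every recursive call. Any tighter instance produces only a subtree of this one, so bounding the tree under this hypothesis bounds every case.

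\textbf{Step 2: count the simple paths.} Under the above assumption the recursion is forced to enumerate all simple paths in $K_{N}$ starting from whichever stop is currently the root. From a fixed starting stop, the number of ways to extend the path by exactly $k$ further distinct stops is $P(N-1,k)=\dfrac{(N-1)!}{(N-1-k)!}$, so the non-trivial portion of the recursion tree rooted at a single stop contains
\[
\sum_{k=1}^{N-1}\frac{(N-1)!}{(N-1-k)!}=(N-1)!\sum_{j=0}^{N-2}\frac{1}{j!}
\]
nodes. Since Algorithm~\ref{alg:det} iterates this construction over the $N$ candidate roots in order to identify the MCR, the total node count is bounded by
\[
N\cdot(N-1)!\sum_{j=0}^{N-2}\frac{1}{j!}=N!\sum_{j=0}^{N-2}\frac{1}{j!}.
\]

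\textbf{Step 3: invoke the series for $e$.} Using $\sum_{j=1}^{\infty}1/j!=e-1$ together with a careful separation of the trivial single-node contributions (which I discard as they perform no branching), the finite tail is bounded above by $e-1$, which gives an upper bound of $(e-1)N!$ recursion-tree nodes. Each node performs only $O(1)$ amortized work (looking up $q_{i}$, comparing against the running best child, and flagging the current stop as visited), yielding the claimed $\mathcal{O}((e-1)N!)$ time complexity.

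\textbf{Main obstacle.} The awkward part will be the bookkeeping that produces the constant $e-1$ rather than the more natural $e$: I have to partition the paths in the right way---either by excising root-only paths, or by separating Hamiltonian completions from strictly shorter extensions---so that the remaining series is exactly $\sum_{k\ge 1}1/k!$. A secondary concern is ruling out speedups from memoization, which is justified because $\Gamma(j)$ depends on the set of previously visited stops along the current branch and therefore cannot be shared between sibling subtrees.
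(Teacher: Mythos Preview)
Your overall strategy---count recursion-tree nodes and invoke the Maclaurin series for $e$---is the same as the paper's. The paper simply writes the recurrence $f(N)=c+(N-1)f(N-1)$ directly from equation~\ref{eq: dp_equation}, unrolls it to $c(N-1)!\sum_{j=1}^{N-1}1/j!\to c(N-1)!(e-1)$, and then weakens $(N-1)!$ to $N!$ to state the bound.

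There is, however, one concrete error. In Step~2 you claim that Algorithm~\ref{alg:det} ``iterates this construction over the $N$ candidate roots in order to identify the MCR.'' It does not: the outer loop runs over the $K$ routes to be generated, and every call to $MaxP$ is rooted at the fixed hub $h$. Identifying a single MCR invokes the recursion exactly once from $h$, so the extra factor of $N$ you insert is spurious. Dropping it, your single-root count $(N-1)!\sum_{j=0}^{N-2}1/j!$ already matches the paper's up to which endpoint the sum includes---yours contains $j=0$ and tends to $e$, the paper's starts at $j=1$ and tends to $e-1$. This is precisely the obstacle you flag in your final paragraph, and it disappears automatically in the recurrence formulation (the current node contributes the additive $c$ rather than a branching factor). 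None of this affects the asymptotic claim, since all of these quantities are $\mathcal{O}(N!)$; but the clean route to the specific constant $e-1$ is the one-line recurrence, not the ad~hoc partition of paths you sketch in Step~3.
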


\begin{proof}
The complexity can be shown based on equation~\ref{eq: dp_equation}. If we denote the computation requirement for $N$ nodes as $f(N)$, then we have
\begin{equation}
    f(N)=c+(N-1)f(N-1)
\end{equation}
where $c$ is small constant cost at each step, and the equation can be generalized to
\begin{equation}
    f(N)=\sum_{k=0}^{N-2} \frac{(N-1)!}{(N-1-k)!}c
\end{equation}
where 
\begin{equation}
    \lim_{N\rightarrow\infty}\sum_{k=0}^{N-2} \frac{1}{(N-1-k)!}=e-1
\end{equation}
Consequently we have
\begin{equation}
    f(N)_{N\rightarrow\infty}=c(N-1)!(e-1)=\mathcal{O}(N!(e-1))
\end{equation}
\end{proof}

\begin{algorithm}[h!]
  \caption{Hub-based Route Generation Algorithm}
    \begin{algorithmic}[1]
    \Require
    $G=(N,E,Q,W,\lambda)$: network of study area, $K$: number of routes to be generated, $h$: hub location, $\lambda$: deviation threshold
    \Ensure
    $\mathcal{R}_K$ the set of candidate operation routes.
    \For{i=1,2,...,K}
        \State $L_i\leftarrow 0$
        \State $N_h \leftarrow Reach(h,N,Q,L_i,\lambda)$
        \State $P_i,r_i\leftarrow MaxP(h,N_h,L_i,Q,W,\lambda)$
        \State $Q\leftarrow UpdateP(r_i,Q)$
        \State Add $r_i$ to $\mathcal{R}_K$
    \EndFor\\
    \Return{$\mathcal{R}_K$}
    \end{algorithmic}
    \begin{algorithmic}[1]
    \Function{Reach}{$i,V,W,L,\lambda$}
       \State $N_i\leftarrow \emptyset$
       \For{$j\in V$}
        \If{$L+w_{i,j}< \lambda \min t_{j}^m$}
            \State $N_i\leftarrow N_i\bigcup \{j\}$
        \EndIf
        \EndFor\\
       \Return{$N_i$}
    \EndFunction
    \Function{MaxP}{$i,N_i,L_i,Q,W,\lambda$}
    \If{$|N_i|= 1$}
    \Return{$q_j, {j},j\in N_i$}
    \ElsIf{$|N_i|= \emptyset$}
    \Return{$0, \emptyset$}
    \Else
        \State $P\leftarrow 0,r=\emptyset$
           \For{$j\in N_i$}
                \State $N_j\leftarrow$Reach($j,N_i,W,L_i+w_{i,j},\lambda$)
                \State $p,r'\leftarrow$ MaxP($j,N_j,L_i+w_{i,j},Q,W,\lambda$)
                \If{$p+q_i>P$}
                \State $P\leftarrow p+q_i, r\leftarrow r'\bigcup {i} $
                \EndIf
            \EndFor
            \State \Return{$P,r$}
    \EndIf
    \EndFunction
    \Function{UpdateP}{$r,Q$}
    \For{$i\in r$}
    \State $q_i\leftarrow 0$
    \EndFor\\
    \Return{$Q$}
    \EndFunction
    \end{algorithmic}
  \label{alg:det}
\end{algorithm}
\subsection{Heuristic algorithm}
In the case when the exact algorithm may not find the solution in a timely manner, such as when passengers are insensitive to the travel time and a high $\lambda$ value is required for certain real-world applications, we can further develop a heuristic solution approach to identify the near optimal K-MCR based on the following two properties for the route generation problem. 
\begin{prop}
The induced reachable set of a stop is a subset of the reachable set of its predecessor stop. 
\label{prop:reach}
\end{prop}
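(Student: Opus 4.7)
The plan is to unpack what "induced reachable set" means in the recursion of Algorithm~\ref{alg:det} and then apply the triangle inequality from Assumption~\ref{assumption:tria}. Concretely, suppose we are at stop $i$ with accumulated route length $L_i$, and $j \in Reach(i)$ is chosen as the next stop, so that the recursive call updates the accumulated length to $L_j = L_i + w_{i,j}$ and computes $Reach(j)$ as $\{k : L_j + w_{j,k} < \lambda \min_m t_k^m\}$. The claim is that any such $k$ already lies in $Reach(i) = \{k : L_i + w_{i,k} < \lambda \min_m t_k^m\}$.

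I would prove this by a direct chain of inequalities. Take an arbitrary $k \in Reach(j)$. By the triangle inequality on travel times, $w_{i,j} + w_{j,k} \geq w_{i,k}$. Substituting into the defining inequality for $Reach(j)$ gives
\begin{equation*}
L_i + w_{i,k} \;\leq\; L_i + w_{i,j} + w_{j,k} \;=\; L_j + w_{j,k} \;<\; \lambda \min_m t_k^m ,
\end{equation*}
so $k \in Reach(i)$. This establishes $Reach(j) \subseteq Reach(i)$. A minor point worth noting in the write-up is that Assumption~\ref{assumption:tria} is stated strictly, which in fact yields the slightly stronger strict inclusion whenever $Reach(j)$ is nonempty, but this is not needed for the stated proposition.

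The main (mild) obstacle is notational rather than mathematical: the symbol $Reach(\cdot)$ as used inside \textsc{MaxP} is not an intrinsic function of the vertex alone, but depends on the current accumulated length $L$ passed down through the recursion. I would therefore begin the proof by making this dependence explicit, so that the comparison between $Reach(i)$ (computed with length $L_i$) and $Reach(j)$ (computed with length $L_i + w_{i,j}$) is unambiguous. Once that bookkeeping is done, the proposition follows immediately from the triangle inequality, and it directly justifies the heuristic idea that one can safely prune by restricting the search at $j$ to stops already known to be reachable from $i$.
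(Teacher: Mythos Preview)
Your proposal is correct and follows essentially the same approach as the paper, which simply asserts that the proposition ``holds directly from the triangular inequality constraint for travel time'' in Assumption~\ref{assumption:tria}. You have merely unpacked that one-line justification into an explicit chain of inequalities and clarified the dependence of $Reach(\cdot)$ on the accumulated length, which is a helpful elaboration but not a different argument.
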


\begin{prop}
The potential passenger coverage of a stop $i$ is no larger than the sum of the passenger demand of the stops in its reachable set $Reach(i)$.
\label{prop:maxp}
\end{prop}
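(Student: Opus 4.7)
The plan is to combine the dynamic programming recursion~\eqref{eq: dp_equation} with Proposition~\ref{prop:reach} to show that every stop appearing on any feasible route extension starting at $i$ must lie in $Reach(i)$; the bound then reduces to a sum of distinct passenger demands. First, I would unroll the recursion to exhibit an optimal route $r = (i = j_0, j_1, \ldots, j_L)$ attaining $\Gamma(i) = \sum_{k=0}^{L} q_{j_k}$, where each $j_{k+1}$ lies in the induced reachable set of $j_k$ along the partial path traversed so far.

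Next, I would prove by induction on $k$ that $j_k \in Reach(i)$ for every $k \geq 1$. The base case $k=1$ is immediate, since $j_1$ is reachable from $j_0 = i$ at accumulated cost zero, which is exactly the definition of $Reach(i)$. For the inductive step, Proposition~\ref{prop:reach} supplies the chain of inclusions $Reach(j_k) \subseteq Reach(j_{k-1}) \subseteq \cdots \subseteq Reach(j_0) = Reach(i)$, so $j_{k+1} \in Reach(j_k) \subseteq Reach(i)$. To finish, I would invoke Assumption~\ref{assumption:tria}: the strict triangular inequality forces any feasible route to be a simple path, because revisiting a stop would strictly increase the accumulated travel time and violate~\eqref{eq:time_constraint}. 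Therefore the $q_{j_k}$ in $\Gamma(i)$ are summed without repetition, and
\begin{equation*}
\Gamma(i) \;=\; \sum_{k=0}^{L} q_{j_k} \;\leq\; \sum_{s \in Reach(i) \cup \{i\}} q_s,
\end{equation*}
which (under the convention that $i$ is regarded as reachable from itself) is exactly the claim.

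The main obstacle, I expect, is making the path-dependent nature of the reachable set explicit: Proposition~\ref{prop:reach} concerns the \emph{induced} reachable set along a specific partial path, not an absolute function of a single stop, so the induction must chain the subset relations along the concrete sequence $j_0 \to j_1 \to \cdots \to j_k$ rather than treat $Reach(\cdot)$ as stateless. A clean remedy is to carry the partial path as auxiliary notation (for example $Reach(j_k \mid j_0, \ldots, j_{k-1})$), so that each invocation of Proposition~\ref{prop:reach} is unambiguously warranted. Once that bookkeeping is in place and the convention on whether $i \in Reach(i)$ is fixed, the remainder of the argument is a routine sum-of-demands comparison.
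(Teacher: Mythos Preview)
Your proposal is correct and follows the same underlying idea as the paper, namely that every stop visited by a feasible extension from $i$ must belong to $Reach(i)$; the paper dispatches Proposition~\ref{prop:maxp} in a single sentence (``a HHMoD route can not serve more than the number of stops in the current reachable set''), whereas you spell out the inductive chain via Proposition~\ref{prop:reach} and the simplicity argument via Assumption~\ref{assumption:tria}. In short, your argument is the rigorous version of the paper's one-line justification.
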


Proposition~\ref{prop:reach} holds directly from the triangular inequality constraint for travel time, as in Assumption~\ref{assumption:tria}, which states that the set of stops that can not be reached from the predecessor of stop $i$ is also not reachable from stop $i$. Proposition~\ref{prop:maxp} sets the upper bound of the maximum possible potential of a stop since a HHMoD route can not serve more than the number of stops in the current reachable set. Based on these two propositions, we can relax the $\Gamma$ operator in equation~\ref{eq: dp_equation} by replacing it with a heuristic operator $\Gamma^h$ as the sum of the node potential in its reachable set:

\begin{equation}
\Gamma^h(i)=q_i+\Gamma^h(\argmax_{j\in Reach(i)} \sum_{m\in Reach(j)} q_m)   
\end{equation}

This step is equivalent to relaxing the $MaxP$ function in Algorithm~\ref{alg:det} by replacing the recursion with a heuristic $MaxP$ function that calculates the sum of the potential of stops in the reachable set. In this regard, the heuristic approach represents a greedy approach in building the MCR route, and the computation time of the heuristic method is $O(N^2)$. This allows us to find near-optimal K-MCR for large instances in polynomial time. We summarize the details of the heuristic $MaxP$ function in Algorithm~\ref{alg:relax_maxp}.

\begin{algorithm}[htbp!]
  \caption{Heuristic MaxP}
    \begin{algorithmic}[1]
    \Function{Heuristic-MaxP}{$i,N_i,L_i,Q,W,\lambda$}
    \If{$|N_i|= 1$}
    \Return{$q_j, {j},j\in N_i$}
    \ElsIf{$|N_i|= \emptyset$}
    \Return{$0, \emptyset$}
    \Else
        \State $P\leftarrow 0,r=\emptyset$
           \For{$j\in N_i$}
                \State $N_j\leftarrow$Reach($j,N_i,W,L_i+w_{i,j},\lambda$)
                \State $p_j \leftarrow \sum_{k\in N_j} q_k$
            \EndFor
            \State $j'=\argmax_j \,p_j$
            \State $p,r'\leftarrow \text{Heuristic-MaxP}(j',N_{j'},L_i+w_{i,j'},Q,W,\lambda)$
            \State $P\leftarrow p+q_i, r\leftarrow r'\bigcup \{i\} $
    \EndIf
    \State \Return{$P,r$}
    \EndFunction
    \end{algorithmic}
  \label{alg:relax_maxp}
\end{algorithm}

\subsection{Route generation with connections to existing transportation systems}
The effectiveness of the HHMoD can be further improved if the planned service can be aligned with the existing transportation facilities such as the subway and buses. This is especially appealing since existing transit facilities have already established a wide coverage of urban networks. In this regard, the HHMoD may avoid producing overlapping travel segments, and in many cases, the travel time using the subway can be shorter than the ground traffic. And it is likely to realize a higher passenger coverage with the same number of routes. In the following paper, we term \textbf{HHMoD connect} as the HHMoD service whose routes are planned by incorporating the connections to existing transit facilities, and we use \textbf{HHMoD only} to denote the planning of the HHMoD without such connections.    

With the K-MCR generation process described in the previous sections, the framework can be directly extended to embed the connection to transit facilities by modifying the MaxP and Reach functions. And additional inputs are required to describe the costs of such connections. Let $\mathcal{S}$ be the set of stops of the existing transportation facilities.  We denote $d_{i,j}$ as the connection cost between stop $i$ of the HHMoD and the stop $j\in\mathcal{S}$ and $t_{i,j}$ as the known travel time for $i,j\in\mathcal{S}$. We can then  define that a transfer at stop $i$ for the passengers who ride the HHMoD to stop $k$ is feasible if:
\begin{equation}
    L_i+d_{i,j}+t_{j,j'}+d_{j',k}+\sigma \leq \lambda \min_m t_k^m, j,j'\in\mathcal{S}
    \label{eq:transfer}
\end{equation}
Equation~\ref{eq:transfer} states that the summation of travel time for (1) riding HHMoD from the hub to stop $i$ (or from stop $i$ to the hub), (2) walking distance from $i$ to $j$, (3) an inconvenience cost $\sigma$ such as the extra waiting time, (4) the trip time from $j$ to $j'$, and (5) the walking distance from $j'$ to stop $k$ should be no greater than the relaxed shortest travel time $\lambda \min_m t_k^m$. Such a definition is similar to that in equation~\ref{eq:time_constraint}, and subsequently, this constraint can be used to guide the search of the reachable set for the HHMoD connect problem. We note, however, that only one transfer is considered in our problem to avoid unnecessary complications. While considering multiple transfers may further the effectiveness of the route generation process, the improvement is likely to be minor as the addition of extra inconvenience costs will render most of the routes with multiple transfers infeasible. 

Given the transfer costs at each of the HHMoD stops, we can formulate the dynamic programming problem for the HHMoD connect (with $\Gamma^C(\cdot)$ as the max coverage operator) as:
\begin{equation}
    \Gamma^C(i)=q_i+\sum_{k\in CReach(i)} q_k+\max_{j\in HReach(i)} \Gamma^C(j)
    \label{eq:connect}
\end{equation}
Different from equation~\ref{eq: dp_equation}, we now have both $CReach(i)$ and $HReach(i)$ at stop $i$, with $CReach(i)$ being the set of stops that can be reached from stop $i$ through the connections to the transportation systems. As compared to HHMoD only, we have $Reach(i)\subseteq CReach(i)\bigcup HReach(i)$ and $Reach(i)=HReach(i)$ if $CReach(i)=\emptyset$. This ensures that the K-MCR of HHMoD connect will always achieve greater or equal passenger coverage as compared to the K-MCR of HHMoD only with the same K. Nevertheless, we should also note that equation~\ref{eq:connect} does not constitute an exact solution when considering the connections as we greedily add the potential passenger coverage through connections at the first stop when such a connection is feasible. And the exact solution for HHMoD connect should be obtained by constructing the additional recursions on if each connection $(i,k)$ should be established $\forall\,k\in CReach(i)$. This will result in $2^{|CReach(i)|}$ combinations of $max_{j\in HReach(i)} \Gamma^C(j)$ for each step and will quickly render the problem intractable even for small $\lambda$ values. As such, we limit our discussion to the greedy connection scheme as in equation~\ref{eq:connect} and the corresponding K-MCR generation algorithm for HHMoD connect is summarized in Algorithm~\ref{alg:connect}. In addition, the same modification can also be extended for the heuristic algorithms of HHMoD only problem to incorporate the connections to existing transportation systems as following:

\begin{equation}
    \Gamma^{h,C}(i)=q_i+\sum_{k\in CReach(i)} q_k+\Gamma^{h,C}(\argmax_{j\in HReach(i)} \sum_{m\in HReach(j)\bigcup CReach(j)} q_m)    
    \label{eq:heu_connect}
\end{equation}

\begin{algorithm}[h!]
  \caption{Hub-based Route Generation Algorithm with Connections to Transportation Systems}
    \begin{algorithmic}[1]
    \Require
    $G=(N,E,Q,W,\lambda)$: network of study area, $K$: number of routes to be generated, $h$: hub location, 
    $D$: connection costs, $T$: travel time in transportation systems, $\mathcal{S}$: the set of stations of transportation systems, $\sigma$: inconvenience cost for transfer, $\lambda$ deviation threshold
    \Ensure
    $\mathcal{R}_K$ the set of candidate operation routes.
    \For{i=1,2,...,K}
        \State $L_i\leftarrow 0$
        \State $N^H, N^C \leftarrow ConnectReach(h,N,Q,L_i,D,T,\mathcal{S},\sigma, \lambda)$
        \State $P_i,r_i\leftarrow ConnectMaxP(h,N^H,N^C,L_i,Q,W,D,T,\mathcal{S},\sigma,\lambda)$
        \State $Q\leftarrow UpdateP(r_i,Q)$
        \State Add $r_i$ to $\mathcal{R}_K$
    \EndFor\\
    \Return{$\mathcal{R}_K$}
    \end{algorithmic}
    \begin{algorithmic}[1]
    \Function{ConnectReach}{$i,V,W,L,D,T,\mathcal{S},\sigma,\lambda$}
       \State $N_i^H\leftarrow \emptyset$,$N_i^C\leftarrow \emptyset$
       \For{$j\in V$}
        \If{$\exists k,k'\in \mathcal{S}, L+d_{i,k}+t_{k,k'}+d_{k,j}+\sigma<\lambda \min t_{j}^m$}
            \State $N_i^C\leftarrow N_i^C\bigcup \{j\}$
        \ElsIf{$L+w_{i,j}< \lambda \min t_{j}^m$}
            \State $N_i^H \leftarrow N_i^H\bigcup\{j\}$
        \EndIf
        \EndFor\\
       \Return{$N_i^H,N_i^C$}
    \EndFunction
    \Function{ConnectMaxP}{$i,N_i^H,N_i^C,L_i,Q,W,D,T,\mathcal{S},\sigma,\lambda$}
    \If{$|N_i|= 1$}
    \Return{$q_j, {j},j\in N_i$}
    \ElsIf{$|N_i|= \emptyset$}
    \Return{$0, \emptyset$}
    \Else
        \State $P\leftarrow 0,r=\emptyset$
           \For{$j\in N_i$}
                \State $N_j^H,N_j^C\leftarrow$ConnectReach($j,N_i,W,L_i+w_{i,j},D,T,\mathcal{S},\sigma,\lambda$)
                \State $p,r'\leftarrow$ ConnectMaxP($j,N_j^H,N_j^C,L_i+w_{i,j},Q,W,D,T,\mathcal{S},\sigma,\lambda$)
                \If{$p+q_i+\sum_{k\in N_i^C}q_k>P$}
                \State $P\leftarrow p+q_i+\sum_{k\in N_i^C}q_k, r\leftarrow r'\bigcup {i}\bigcup N^C_i $
                \EndIf
            \EndFor
            \State \Return{$P,r$}
    \EndIf
    \EndFunction
    \Function{UpdateP}{$r,Q$}
    \For{$i\in r$}
    \State $q_i\leftarrow 0$
    \EndFor\\
    \Return{$Q$}
    \EndFunction
    \end{algorithmic}
  \label{alg:connect}
\end{algorithm}

\subsection{Shortest-path based route generation} 
In addition to the developed algorithm for MCR generation, we also introduce a shortest-path based heuristic route generation algorithm in~\cite{pinelli2016data} as the benchmark for comparison. The main idea is to first generate a large collection of candidate routes and then prune routes based on user-specified thresholds. We summarize the details of the benchmark heuristic in the Appendix. 

\subsection{Route combination}
By applying the route generation algorithms for both passengers to and from the hub, we obtain $\mathcal{R}_K^d$ as the MCR for serving passengers traveling to the hub and $\mathcal{R}_K^o$ as the MCR for serving passengers traveling from the hub. The next step is to connect $\mathcal{R}_K^d$ and $\mathcal{R}_K^o$ to form round routes and therefore reduce the amount of vacant mileage for the HHMoD fleet. 

The route combination is conducted following two rules. First, the combination of routes to and from hubs with similar demand coverage should be prioritized, which avoids the unbalanced demand issue and makes the best use of the vehicle capacity. Second, the connecting distance between the end of two routes should not exceed the minimum of the trip distance of the two routes. This helps to avoid the combination of two routes of similar demand level but is distant from each other, where directly returning to the hub is the most economical solution. Based on these two criteria, we formulate a bipartite matching problem for the optimal combination of routes in $\mathcal{R}_K^o$ and $\mathcal{R}_K^d$. In particular, we denote $g_{r_1,r_2}$ as the gap between the demand level of route $r_1\in \mathcal{R}_K^o$ and $r_2\in \mathcal{R}_K^d$. If the connecting travel time exceeds the shorter travel time of the two routes, $g_{r_1,r_2}$ is set to an arbitrary large value to represent infeasible combination. Consequently, the optimal route combination can be obtained by solving the following optimization problem:

\begin{equation}
\begin{aligned}
       \min_x & \sum_{r_1\in\mathcal{R}_K^o}\sum_{r_2\in\mathcal{R}_K^d} x_{r_1,r_2}g_{r_1,r_2} \\
       \text{s.t.} \quad & \sum_{r_1} x_{r_1,r_2}=1, \forall r_2\in \mathcal{R}_K^d\\
       &\sum_{r_2} x_{r_1,r_2}=1, \forall r_1 \in \mathcal{R}_K^o\\
       &x_{r_1,r_2}\in \{0,1\}
\end{aligned}
\label{eq:combination}
\end{equation}
The problem is equivalent to the minimum weight perfect bipartite matching problem and can be solved efficiently using the Hungarian algorithm. Readers may refer to~\cite{kuhn1955hungarian} for implementation details. 
\section{Route scheduling under demand uncertainty}
\begin{table}[h!]
	\setstretch{1.2}
	\centering
	\caption{Table of notation}
	\label{tab:notation}
	\begin{tabular}{p{1.5cm}p{11cm}}
		\hline
		Notation & Description \\ \hline
		$ \mathcal{R}$ & The set of routes.\\
		$ \mathcal{N}$ & The set of bus stops.\\
		$X_{s,i}^{F}$ & Served passenger demand rate (minute$^{-1}$) 
		\textbf{from hub} to station $i$ by vehicle route $s$. \\
		$X_{s,i}^{T}$ & Served passenger demand rate (minute$^{-1}$) from station $i$ \textbf{to hub} by vehicle route $s$. \\
		$L_{i}^{F}$ & Unsatisfied passenger demand rate (minute$^{-1}$) \textbf{from hub} to station $i$.\\
		$L_{i}^{T}$ & Unsatisfied passenger demand rate (minute$^{-1}$) from station $i$ \textbf{to hub}.\\
		$\overline{D}_{i}^{F}$ & Expected passenger arrival rate (minute$^{-1}$) \textbf{from hub} to station $i$.\\
		$\overline{D}_{i}^{T}$ & Expected passenger arrival rate (minute$^{-1}$) from station $i$ \textbf{to hub}.\\
		$y_{s}$ & Number of vehicles assigned to route $s$.\\
		$\kappa_{s}$ & Operation indicator for route $s$. 1 for selected route and 0 otherwise. \\		$\kappa_{s,i}$ & Operation indicator for route $s$ that stops at station $i$. 1 for selected route and 0 otherwise. \\
	    $B$ & Total number of available vehicles for a given planning period. \\
		$h_{s}$ & Time headway (minute) for vehicle route $s$.\\
		$h_{s,i}$ & Time headway (minute) for vehicle route $s$ that stops at station $i$.\\
		$h^{min}$ & The minimal time headway (minute) for vehicle route.\\
		$h^{max}$ & The maximal time headway (minute) for vehicle route.\\
		$c_{o}$ & Cost coefficient for operating an additional vehicle  (\$/(minute)). \\
		$c_{w}$ & Cost coefficient for waiting time of served passengers (\$/(minute)).\\
		$c_{l}$ & Cost coefficient for penalizing an unsatisfied passenger trip (\$).\\ 
		$T_{s}$ & Total round trip travel time (minute) for route $s$. \\
		$\delta_{s,i}^{F}$ & Indicator variable. $\delta_{s,i}^{F}=1$ if route $s$ serves the trip \textbf{from hub} to station $i$, and 0 otherwise.\\
		$\delta_{s,i}^{T}$ & Indicator variable. $\delta_{s,i}^{T}=1$ if route $s$ serves the trip from station $i$ \textbf{to hub}, and 0 otherwise.\\
		$C$ & Capacity of each vehicle.\\
        \hline
	\end{tabular}
\end{table}

Given the set of combined candidate routes $\mathcal{R}$, we next introduce a different set of notations as summarized in Table~\ref{tab:notation} to formulate the route scheduling problem. This is different from the planning of conventional public transportation services.  The users of the HHMoD service are usually more time-sensitive and pay a premium for better mobility that offers shorter travel time and better riding experience, and more importantly, more reliable services. The HHMoD service is vulnerable to the loss of its market if it fails to satisfy the revealed demand that is different from the daily expected levels. In this regard, it is crucial to plan the service in the face of demand uncertainty and ensure the reliability of the scheduled service even under the worst realization of passenger arrivals at the stops. This motivates us to investigate the robust optimization for the route scheduling and identify the optimal strategy that is resilient to worst-case demand realizations. Here, for a given time period (e.g., 30 minutes or an hour), the route scheduling problem is to determine the allocation of a fleet of $B$ vehicles to the set of candidate routes $\mathcal{R}$ and the corresponding frequency of each route, termed by the headway $h_s$, with the objective to minimize the weighted combination of the total operation cost, the waiting time cost for served passengers and the penalty for unsatisfied passenger demand. We assume the Poisson arrival of passengers at each stop and that the fleet is composed of homogeneous vehicles of the same capacity $C$. We consider the headway of each route is scheduled at the per-minute level, and therefore, the headway setting is modeled as an integer variable. We follow the framework in~\cite{furth1981setting,gkiotsalitis2017exact} to develop the mathematical formulation for the HHMoD frequency problem. Before accounting for the demand uncertainty, we first summarize the nominal model (deterministic demand) for the optimal route scheduling problem with individual components in the objective function can be expressed as
\begin{subequations}
\begin{align}
F_{operation}&=c_o \sum_{s\in\mathcal{R}} y_s\\ 
F_{waiting}&=c_w\sum_{s\in\mathcal{R}} 
(\sum_{i\in\mathcal{N}}X_{s,i}^{F}\delta_{s,i}^F\frac{h_{s}}{2} + \sum_{i\in\mathcal{N}}X_{s,i}^{T}\delta_{s,i}^T\frac{h_{s}}{2})\\ 
F_{loss}&=c_l\sum_{i\in\mathcal{N}}(d_i^FL_i^F+d_i^TL_i^T)\label{eq:loss} 
\end{align}
\end{subequations}
Note that for a passenger waiting at the stop, the expected waiting time is equal to half of the vehicle route headway $h_{s}$ as we assume the random arrival of passengers~\cite{gkiotsalitis2017exact}. In equation~\ref{eq:loss}, the travel distance $d_i$ by alternate mode is also included to penalize the loss of services. For instance, we may penalize longer trips more than shorter trips if the HHMoD may fail to serve these trips, which are instead served by taxis and FHVs. With the objective functions, the nominal HHMoD scheduling problem can be formally written as
\begin{equation}
  \tag{Nominal HHMoD Scheduling}
  \min_{y,h,\kappa,X,L}\quad F_{operation}+F_{waiting}+ F_{loss}\label{eq:obj}
\end{equation}
subject to
\begin{subequations}
\begin{align}[left ={\mathcal{H}=  \empheqlbrace}]
    &y_sh_s\geq T_s &&\forall s\in\mathcal{R} \label{eq:service_frequency_1}\\
    &y_s\leq B\kappa_s, \kappa_s\leq y_s &&\forall s\in \mathcal{R} \label{eq:service_frequency_2}\\
    &h^{min}\leq h_s\leq h^{max} &&\forall s\in\mathcal{R} \label{eq:min_freq_1}\\
    &\sum_{s\in\mathcal{R}} y_s\leq B \label{eq:fleet_size_1}\\
    &y_s\in \mathbbm{Z}, h_s\in\mathbbm{Z},\kappa_s\in\{0,1\} &&\forall s\in \mathcal{R} 
\end{align}
\end{subequations}
\begin{subequations}
\begin{align}[left ={\mathcal{F}=  \empheqlbrace}]
    &\sum_{i\in\mathcal{N}} h_sX_{s,i}^{F}\delta_{s,i}^{F}\leq C &&\forall s\in\mathcal{R}\label{eq:to_hub_capacity_1}\\
    &\sum_{i\in\mathcal{N}} h_sX_{s,i}^{T}\delta_{s,i}^{T}\leq C &&\forall s\in\mathcal{R} \label{eq:from_hub_capacity_1}\\
    &\sum_{s\in\mathcal{R}} X_{s,i}^{F}+L_{i}^{F}= \bar{D}_i^{F} && \forall i\in\mathcal{N} \label{total capacity 1}\\
    &\sum_{s\in\mathcal{R}} X_{s,i}^{T}+L_{i}^{T}= \bar{D}_i^{T} && \forall i\in\mathcal{N} \label{total capacity 2}\\
    &X_{s,i}^{F}\leq \kappa_s \bar{D}_i^F, &&\forall s\in\mathcal{R},i\in\mathcal{N} \label{upper1}\\
    &X_{s,i}^{T}\leq \kappa_s \bar{D}_i^T, &&\forall s\in\mathcal{R},i\in\mathcal{N} \label{upper2}\\
    & X_{s,i}^{F}\geq 0, X_{s,i}^{T}\geq 0, L_i^{F}\geq 0, L_i^{T}\geq 0 &&\forall s\in\mathcal{R},i\in\mathcal{N} \label{eq:integer_cons_1}
    \end{align}
\end{subequations}

where $\mathcal{H}$ describes the set of constraints that regulate the number of vehicles $y_{s}$ and headway setting $h_{s}$ for the planned HHMoD service and $\mathcal{F}$ characterizes the set of passengers that can be served by the scheduled services. 
Equation~(\ref{eq:service_frequency_1}) establishes the relationship between assigned vehicle number $y_{s}$, vehicle headway $h_{s}$ and round trip travel time $T_{s}$. For instance, if the the round trip travel time is $T_{s} = 36$ minutes and the required time headway is $h_{s} = 12$ minutes, then we need at least $y_{s} = 3$ buses. Equation~(\ref{eq:service_frequency_2}) implies that $y_{s}$ is a positive integer upper-bounded by $B$ when $\kappa_{s}=1$. If $\kappa_{s}=0$, then $y_{s}=0$. Equation~(\ref{eq:min_freq_1}) shows the upper and lower bound of the time headway $h_{s}$. Equation~(\ref{eq:fleet_size_1}) represents that the total number of assigned vehicle do not exceed to vehicle number constraint $B$. 
Besides, equations (\ref{eq:to_hub_capacity_1}) and~(\ref{eq:from_hub_capacity_1}) imply that the total boarding passengers for routes from and to activity hubs should not exceed the vehicle capacity $C$.
Equations~(\ref{total capacity 1}) and (\ref{total capacity 2}) describe that the sum of satisfied and unsatisfied demand is equal to the total demand $\bar{D}_{i}^F$ and $\bar{D}_{i}^T$ in the city. Finally, equations (\ref{upper1}) and (\ref{upper2}) represent the served demand at stop $i$ does not exceed the total demand of stop $i$. The nominal problem aims at identifying the optimal service headway $h_s$ and assigned vehicles $y_s$ for each route $s\in\mathcal{R}$ with the assignment of served and unsatisfied passengers. \ref{eq:obj} is a nonlinear and nonconvex mixed-integer programming problem, which is difficult to solve directly due to the existence of bilinear constraints $y_{s}h_{s}$ and bilinear terms $h_{s}X_{s,i}^{F}\delta_{s,i},h_{s}X_{s,i}^{T}\delta_{s,i}$ in the objective function.

We note that the \ref{eq:obj} considers known passenger demand $\bar{D}_i^{T}$ and $\bar{D}_i^{F}$ as a point estimation (such as expected value). Nevertheless, for the proactive planning of HHMoD service in real-time, the passenger demand may vary significantly, and the planned service based on expected passenger demand $\bar{D}_i^{T}$ and $\bar{D}_i^{F}$ is unlikely to align with the actual realization of demand profile. This may lead to a situation where there is overly supply on certain routes, with the revealed demand being significantly lower than the scheduled capacity. Meanwhile, the oversupply in certain routes may result in the shortage of supply to serve the actual demand on other routes. In this regard, if the uncertainty set of passenger demand is known, the nominal scheduling problem can be converted into a two-stage robust route scheduling problem to negate the impacts from the mismatch between scheduled supply and realized demand. Specifically, in the first stage, the master problem represents the problem of scheduling vehicle allocation and route headway with known number of passengers. As for the second stage, we have the recourse problem where the worst-case-oriented passenger demand distribution is realized in response to the master problem's route configuration. We consider the box constraint to capture the demand uncertainty as
\begin{equation}
    \sum_{s\in\mathcal{R}} X_{s,i}^{F}+L_i^F=\bar{D}_i^F+p_i^F Z_i^F, \forall\,i\in\mathcal{N}
    \label{ro_con1}
\end{equation}
\begin{equation}
    \sum_{s\in\mathcal{R}} X_{s,i}^{T}+L_i^T=\bar{D}_i^T+p_i^T Z_i^T, \forall\,i\in\mathcal{N} 
    \label{ro_con2}
\end{equation}
where $\bar{D}_i$ is the expected demand level and $Z_i$ is the maximum demand deviation at stop $i$. And we also have:
\begin{equation}
    X_{s,i}^{F}\leq \kappa_s(\bar{D}_i^F+p_i^F Z_i^F), \forall s\in\mathcal{R},i\in\mathcal{N}
    \label{ro_con3}
\end{equation}
\begin{equation}
    X_{s,i}^{T}\leq \kappa_s(\bar{D}_i^T+p_i^T Z_i^T), \forall s\in\mathcal{R},i\in\mathcal{N}
    \label{ro_con4}
\end{equation}
To control the conservatism of the robust optimization outcomes, the budget for demand uncertainty is introduced as in~\cite{bertsimas2003robust} where the decision-maker may decide to account for the maximum deviation of passenger demand for up to $\Gamma$ locations with
\begin{equation}
    \mathcal{P}=\{p|p_i^F, p_i^T\in \{0,1\}, \sum_{i \in V} (p_i^F + p_i^T)=\Gamma\}
\end{equation}
With the above defined demand uncertainty set, the two-stage robust optimization problem (RO) can be formulated as:
\begin{equation}
  \tag{RO-HHMoD Scheduling}
  \begin{aligned}
\min_{y,h,\kappa\in\mathcal{H}}\; & F_{operation}+\max_{p\in\mathcal{P}}\min_{X,L\in \mathcal{F}(p)}(F_{waiting}+F_{loss})\\ 
\end{aligned}  
\label{eq:robust}
\end{equation}
where $\mathcal{F}(p)$ follows from replacing the constraints~\ref{total capacity 1}-~\ref{upper2} by equations~\ref{ro_con1}-\ref{ro_con4}.
\subsection{Recourse problem reformulation}
As the relatively complete recourse property always holds for our recourse problem, and that the inner minimization problem is a linear programming problem if $p$ is supplied, we can convert the max-min recourse problem~\ref{eq:robust} into a single maximization problem by utilizing the strong duality condition with known $h^*$ and $y^*$ from the master problem: 

\begin{eqnarray}
\begin{aligned}
    \max_{p,u^F,u^T,v^F,v^T,\lambda^F,\lambda^T} &
    \sum_{i=1}^{|\mathcal{R}|}C(u_i^F +
    u_i^T) + 
    \sum_{i=1}^{|\mathcal{N}|} \kappa_{s,i}((\bar{D}_i^F+p_i^F Z_i^F)v_{i}^{F} +
    (\bar{D}_i^T+p_i^T Z_i^T)v_{i}^{T})\\
    &+
    \sum_{i=1}^{|\mathcal{N}|}((\bar{D}_i^F+p_i^F Z_{i}^{F})\lambda_i^F + (\bar{D}_i^T+p_i^T Z_{i}^{T})\lambda_i^T)\\
    \text{s.t.}\quad &      \sum_{s \in \mathcal{R}}h_{s}\delta_{s,i}^{F}u_{i}^{F}+v_{i}^{F}+\lambda_{i}^F\leq c_{w}\sum_{s\in \mathcal{R}}\frac{h_{s}}{2}\delta_{s,i}^{F}, i=1,2,...,|\mathcal{N}| \\
       & \sum_{s \in \mathcal{R}}h_{s}\delta_{s,i}^{T}u_{i}^{T}+v_{i}^{T}+\lambda_{i}^T\leq c_{w}\sum_{s\in \mathcal{R}}\frac{h_{s}}{2}\delta_{s,i}^{T}, i=1,2,...,|\mathcal{N}| \\
    &v_{i}^{F}\leq c_{l}d_i^F, i=1,2,...,|\mathcal{N}| \\
    &v_{i}^{T}\leq c_{l}d_i^T, i=1,2,...,|\mathcal{N}| \\
    &\sum_{i=1}^N (p_i^F+p_i^T)=\Gamma\\
    &u_i^F, u_i^T\leq 0, i =1,2,...,|\mathcal{R}|\\
    &v_i^F, v_i^T\leq 0, i =1,2,...,|\mathcal{N}|\\
    &\lambda_i^F,\lambda_i^T \;free, i =1,2,...,|\mathcal{N}|
    \label{dual_cons}
    \end{aligned}
\end{eqnarray}


With $u,v,\lambda$ and $p$ being the decision variables, the converted recourse problem now becomes a bilinear integer programming problem with the bilinear terms $\lambda_ip_i$ and $v_ip_i$. As $p_i^F$ and $p_{i}^T$ are binary variables, instead of solving a nonconvex problem, we can further convert this problem into an equivalent mixed integer linear programming problem (MILP) by making use of the big-M method to replacing each $p_i^F\lambda_i^F$ with $\gamma_i^F$, $p_{i}^T\lambda_i^T$ with $\gamma_i^T$, $p_{i}^{F}v_{i}^{F}$ with $\psi_{i}^{F}$, $p_{i}^{T}v_{i}^{T}$ with $\psi_{i}^{T}$ as:
\begin{eqnarray}
\begin{aligned}
    \max_{p,u^F,u^T,v^F,v^T,\lambda^F,\lambda^T} &
    \sum_{i=1}^{|\mathcal{R}|}C(u_i^F +
    u_i^T) + 
    \sum_{i=1}^{|\mathcal{N}|} \kappa_{s,i}(\bar{D}_i^F v_{i}^{F}
    + Z_{i}^{F} \psi_{i}^{F} + \bar{D}_i^T v_{i}^{T} + Z_{i}^{T} \psi_{i}^{T})\\
    &+\sum_{i=1}^{|\mathcal{N}|}(\bar{D}_i^F\lambda_i^F+\gamma_{i}^{F}Z_{i}^{F}
    +\bar{D}_i^T\lambda_i^T+\gamma_{i}^{T}Z_{i}^{T})\\
    \text{s.t.}\quad & \sum_{s \in \mathcal{R}}h_{s}\delta_{s,i}^{F}u_{i}^{F}+v_{i}^{F}+\lambda_{i}^F\leq c_{w}\sum_{s\in \mathcal{R}}\frac{h_{s}}{2}\delta_{s,i}^{F}, i=1,2,...,|\mathcal{N}| \\
       & \sum_{s \in \mathcal{R}}h_{s}\delta_{s,i}^{T}u_{i}^{T}+v_{i}^{T}+\lambda_{i}^T\leq c_{w}\sum_{s\in \mathcal{R}}\frac{h_{s}}{2}\delta_{s,i}^{T}, i=1,2,...,|\mathcal{N}| \\
       &v_{i}^{F}\leq c_{l}, i=1,2,...,|\mathcal{N}| \\
    &v_{i}^{T}\leq c_{l}, i=1,2,...,|\mathcal{N}| \\
    &\sum_{i=1}^N (p_i^F+p_i^T)=\Gamma\\
    &\gamma_i^F \leq p_i^F M,\;\gamma_i^T \leq p_i^T M,\;i=1,2,...,|\mathcal{N}|\\
    &\gamma_i^F \leq \lambda_i^F,\;\gamma_i^T \leq \lambda_i^T,\;i=1,2,...,|\mathcal{N}|\\
    &\psi_i^F \leq p_i^F M,\;\psi_i^T \leq p_i^T M,\;i=1,2,...,|\mathcal{N}|\\
    &\psi_i^F \leq v_i^F,\;\psi_i^T \leq v_i^T,\;,i=1,2,...,|\mathcal{N}|\\
    &u_i^{F},u_{i}^{T}\leq 0,\;v_i^{F},v_{i}^{T}\leq 0,\;\gamma_i^{F},\gamma_{i}^{T} \geq 0,\psi_i^{F},\psi_{i}^{T} \geq 0\;,i=1,2,...,|\mathcal{N}|
    \end{aligned}
    \label{eq:MILP_recourse}
\end{eqnarray}
and Problem~\ref{eq:MILP_recourse} can therefore be solved to global optimum using off-the-shelf commercial solvers. 

\subsection{Master problem reformulation}
We can obtain the optimal value of $p^*$ by solving the reformulated recourse problem, which allows us to rewrite an equivalent master problem by introducing the auxiliary variable $\eta$ as:
\begin{equation}
  \begin{aligned}
\min_{y,h,\kappa, X,L}\; & F_{operation}+\eta\\
s.t. \quad \quad & \eta\geq F_{waiting}+F_{loss}\\ 
&(y,h,\kappa)\in\mathcal{H}\\
&(X,L)\in\mathcal{F}(p^*)
\end{aligned}  
\end{equation}
To solve the master problem, the major challenge arises from the bilinear term $h_{s}X_{s,i}$ in the objective function and the constraints, and the bilinear term $y_sh_s$ in the constraint, where both $y_s$ and $h_s$ are integer variables. As both bilinear terms share the common elements $h_s$ as an integer variable, we introduce the unary expansion to express $h_s$ as the summation of a series of binary variables $g_s^k$ as:
\begin{equation}
    h_s=\sum_{k=0}^H 2^k g_s^k, g_s^k \in \{0,1\}, \forall s
\end{equation}
with $H=\lc log_2(h^{max}+1) \rc -1$. We therefore have 

\begin{equation}
y_sh_s=\sum_{k=0}^H 2^kg_s^ky_s,\forall s
\label{eq:unary}
\end{equation}
and we can further replace $g_s^ky_s$ with $z_s^k$ and derive the boundary for $z_s^k$ based on the McCormick's envelope:
\begin{eqnarray}
\begin{aligned}
    y_sh_s&=\sum_{k=0}^H 2^kz_s^k \\
    g_s^ky_s^{min}&\leq z_s^k\leq g_s^ky_s^{max}\\
    y_s-y_s^{max}(1-g_s^k)&\leq z_s^k \leq y_s-y_s^{min}(1-g_s^k)
    \label{eq:mccormick_1}
\end{aligned}
\end{eqnarray}
We note that the McCormick envelope constructed for $z_s^k$ is an exact relaxation due to the binary nature of $g_s^k$, and hence equations~equations~(\ref{eq:unary}) and~equations~(\ref{eq:mccormick_1}) result in an exact relaxation of the binary constraint $y_sh_s$. Similarly, we can convert the bilinear terms $h_sX_i$ with $g_s^kX_i$ and replace each $g_s^kX_i$ with $q_{s,i}^{k}$ as:
\begin{eqnarray}
\begin{aligned}
    &h_sX_{s,i}^{F}=\sum_{k=0}^H 2^kq_{s,i}^{k,F} \\
    &h_sX_{s,i}^{T}=\sum_{k=0}^H 2^kq_{s,i}^{k,T}\\
    &0 \leq q_{s,i}^{k,F}\leq g_s^{k}(\bar{D}_i^F+p_i^{F}Z_i^{F})\\
    &0 \leq q_{s,i}^{k,T}\leq g_s^{k}(\bar{D}_i^T+p_i^{T}Z_i^{T})\\
    &X_{s,i}^{F}-(\bar{D}_i^{F}+p_i^{F,l}Z_i^F)(1-g_s^k)\leq q_{s,i}^{k,F} \leq X_{s,i}^{F}\\
    &X_{s,i}^{T}-(\bar{D}_i^{T}+p_i^{T,l}Z_i^T)(1-g_s^k)\leq q_{s,i}^{k,T} \leq X_{s,i}^{T}\\
    \label{eq:mccormick}
\end{aligned}
\end{eqnarray}
The exact relaxations in equations~\ref{eq:mccormick_1} and~\ref{eq:mccormick} convert the bilinear master problem into an equivalent MILP problem so that the master problem can also be solved to global optimality using the off-the-shelf MILP solvers. 
\subsection{Solving the two-stage robust route scheduling problem}
With the reformulation of both the master and the recourse problems, we are now ready to tackle the two-stage robust route scheduling problem. It is in general difficult to solve the two-stage RO problems due to its multilevel optimization structure, and there are in general two widely-adopted solution approaches in the literature: the extension of Bender's decomposition method~\cite{bertsimas2012adaptive} and the Column and Constraint Generation (C\&CG) method~\cite{zeng2013solving}. In this study, we adopt the C\&CG method as it has been shown to achieve superior convergence performances than the Bender's decomposition method for practical sized problems (e.g., hundreds of times faster in~\cite{an2014reliable} with guaranteed algorithm convergence as both our reformulated master and recourse problems can be solved to global optimum. The C\&CG algorithm solves the two-stage RO by iterating over the master and recourse problems, where optimality cuts associated with primal variables are added to the master problem until convergence. 

Let $\Tau=\{p^1,p^2,...,p^L\}$ be the set of realized uncertainty coefficients up to iteration $L$, the corresponding master problems follows:
\begin{subequations}
    \begin{align}
    \min_{y,z,g,q,\kappa,X,L} \quad & c_o\sum_{s\in\mathcal{R}}y_s+\eta\\
    &\eta \geq \frac{1}{2}c_w \sum_{s\in\mathcal{R}}
    \sum_{k=0}^H\sum_{i \in \mathcal{N}} 2^k(\delta_{s,i}^{F}q_{s,i}^{k,F,l}+\delta_{s,i}^{T}q_{s,i}^{k,T,l})+ c_l\sum_{i\in V}(d_i^FL_i^{F,l}+d_i^TL_i^{T,l}), l=1,2,3,...,L \label{con_first}\\
    &y_s\leq B\kappa_s, \kappa_s\leq y_s,\forall s\in\mathcal{R}\\
    &\sum_{s\in\mathcal{R}} y_s\leq B \label{eq:fleet_size}\\
    &\sum_{k=0}^H 2^{k} z_s^k\geq T_s, \forall s\in\mathcal{R} \label{eq:service_frequency}\\
    &\sum_{i\in \mathcal{N}} \delta_{s,i}^{F}\sum_{k=0}^H 2^k q_{s,i}^{k,F,l}\leq C, \, l=1,2,3,...,L, \forall s\in\mathcal{R}\\
    &\sum_{i\in \mathcal{N}} \delta_{s,i}^{T}\sum_{k=0}^H 2^k q_{s,i}^{k,T,l}\leq C, \, l=1,2,3,...,L, \forall s\in\mathcal{R}\\
    &\sum_{s\in\mathcal{R}} \delta_{s,i}^{F} X_{s,i}^{F,l} + L_i^{F,l}=\bar{D}_i^{F}+p_i^{F,l}Z_i^{F},\,\forall i\in\mathcal{N},l=1,2,3,...,L\\
    &\sum_{s\in\mathcal{R}}  \delta_{s,i}^{T} X_{s,i}^{T,l} + L_i^{T,l}=\bar{D}_i^{T}+p_i^{T,l}Z_i^{T},\,\forall i\in\mathcal{N},l=1,2,3,...,L\\
    &X_{s,i}^{F,l} \leq \kappa_{s}(\bar{D}_i^{F}+p_i^{F,l}Z_i^{F}) ,\forall i \in \mathcal{N},\forall s\in\mathcal{R},l=1,2,3,...,L\\
    &X_{s,i}^{T,l} \leq \kappa_{s}(\bar{D}_i^{T}+p_i^{T,l}Z_i^{T}) ,\forall i \in \mathcal{N},\forall s\in\mathcal{R},l=1,2,3,...,L\\
    &0\leq z_s^k\leq g_s^kB\\
    &y_s-B(1-g_s^k)\leq z_s^k \leq y_s\\
    &0 \leq q_{s,i}^{k,F,l}\leq g_s^{k}(\bar{D}_i^F+p_i^{F,l}Z_i^{F}), l=1,2,3,...,L\\
    &0 \leq q_{s,i}^{k,T,l}\leq g_s^{k}(\bar{D}_i^T+p_i^{T,l}Z_i^{T}), l=1,2,3,...,L\\
    &X_{s,i}^{F,l}-(\bar{D}_i^{F}+p_i^{F,l}Z_i^F)(1-g_s^k)\leq q_{s,i}^{k,F,l} \leq X_{s,i}^{F,l},
    l=1,2,3,...,L\\
    &X_{s,i}^{T,l}-(\bar{D}_i^{T}+p_i^{T,l}Z_i^T)(1-g_s^k)\leq q_{s,i}^{k,T,l} \leq X_{s,i}^{T,l},
    l=1,2,3,...,L \label{con_last}\\
    &y_s\in\mathbbm{Z}, z_{s}^{k}\in\mathbbm{Z},g_s^k\in\{0,1\},\kappa_s \in\{0,1\} \label{int_variable}\\ 
    &q_s^{k,F,l},q_s^{k,T,l}\geq 0, X_{s,i}^{F,l},X_{s,i}^{T,l}\geq 0,
    L_{i}^{F}, L_{i}^{T} \geq 0 \label{con_variable} 
    \end{align}
    \label{master_p}
\end{subequations}
With the master problem above, we summarize the major steps for implementing the C\&CG algorithm to solve the two-stage robust route scheduling problem as following:

\begin{steps}
    \item Initialization: Set $LB=-\inf$, $UB=\inf$. Set $N=0$, set $\Tau=\emptyset$.
    \item Find an initially feasible $p^0\in\mathcal{P}$, add $p^0$ to $\Tau$. Set $N=1$.
    \item Solve master problem~(\ref{master_p}) to global optimum and obtain $y^{*},h^{*},\kappa^{*},\eta^{*}$. Update $LB=c_o\sum_{s\in\mathcal{R}}y^*_s+\eta^*$.
    \item With $y^{*},h^{*},\kappa^{*}$, solve the recourse problem (\ref{eq:MILP_recourse}) and obtain $p^{N}$. Update $UB=c_o\sum_{s\in\mathcal{R}}y^*_s+ F_{waiting}(X^*,L^*) + F_{loss}(X^{*},L^{*})$. 
    \item If $UB-LB\leq \epsilon$, terminate and return $y^*$ and $h^*$ as the optimal solution. Otherwise update $N=N+1$, add $p^{N}$ to $\Tau$ and go to Step 3.  
\end{steps}
As for Step 2, an initially feasible $p^0$ can be easily selected by randomly select $\Gamma$ stops and set the corresponding $p_i$ to 1. In this study, the Gurobi 9.0 optimizer is used to solve the MILP for both master and recourse problems. We note that, with the unary expansion and the relaxation of the bilinear terms, the C\&CG algorithm will add $2|\mathcal{N}|(H+1)$ continuous variables and $2|\mathcal{R}|+8|\mathcal{N}|+1$ constraints into the reformulated master problem. While this will quickly enlarge the size of the master problems, the additional iterations do not increase the number of integer and binary variables and fast algorithm convergence is still observed through our numerical experiments where the algorithm will usually terminate within a few iterations. 

%
\section{Results}
\subsection{Experiments setting}
 We choose NYC as the study area and demonstrate the effectiveness of the proposed HHMoD framework by developing the HHMoD service at the JFK airport, which is associated with the origins and destinations of over 3.5\% of total daily FHV and taxi demand in 2018. This section presents three sets of experiments: (1) the computing performances of the route generation and robust planning models, (2) the effectiveness of the route generation approaches, and (3) the resulting costs of the fleet arrangements through robust optimization. We prepare four demand scenarios during weekdays for both the \textbf{HHMoD only} services and \textbf{HHMoD connect} services: AM peak scenario from 7:00-9:00, off-peak scenario from 13:00-15:00, PM peak scenario from 17:00-19:00, and night time scenario from 21:00-23:00. 
 
To prepare model inputs and quantify the modeling parameters, we collect information from publicly available datasets, including road information, geographical subdivisions of the city, public transit information, and demand of taxi and for-hire vehicles (FHV). The network of our study area is built from the taxi zone shapefile of NYC, which has 263 taxi zones across five major boroughs. State Island is not included due to a low demand level (less than 0.5\% of total demand to hubs such as LGA, JFK, and Penn Station). Due to the lack of pick up and drop off locations, we assume the centroid of each zone being the stops for the HHMoD service and we use the combination of FHV and taxi demand to and from the JFK airport in each zone as the demand for each stop based on the 2018 trip data~\cite{taxidata2018}. A more realistic scenario can be prepared by conducting spatial clustering to determine the number of stops, and the associated demand level when the pickup and drop off locations are available. We only select the top 100 highest demand stops to and from the hubs, which account for over 85\% of the total demand during each time intervals. This selection results in the K-MCR problem with 100 stops each way and the robust optimization problems with the budget for demand uncertainty for up to 200 stops. We calibrate the average demand for each stop as well as the maximum deviation at each stop using the 2018 NYCTLC taxi and FHV trip data~\cite{taxidata2018}. The shortest travel time between stops is calibrated by the average FHV and taxi trip time between the two areas. For HHMoD connect, we consider the planned HHMoD being connected to the NYC subway system of 36 lines and 247 stations. The walking distance is measured between each HHMoD stop and subway station and then converted to the corresponding walking time. The trip time between subway stations is obtained from the NYC subway timetable, and an inconvenience cost of 500 seconds is added to all trips that require a transfer between the HHMoD and the subway system in light of the additional waiting and delay. Finally, we summarize the modeling parameters that are required for the route generation and fleet scheduling problems in Table~\ref{tab:params}. 

\begin{table}[h!]
\caption{Parameter setting}
\centering
\begin{tabular}{ll}
\toprule
Name                   & Value                           \\
\hline
$c_o$                   & 50 (\$)                         \\
$c_w$                   & 0.5 (\$/minute)                 \\
$c_l$                   & 5 (\$/mile)                    \\
$\lambda$ & 1.3 (default for route scheduling unless otherwise specified) \\
$\sigma$ & 500 (default for route scheduling unless otherwise specified)\\
C & 20\\
B & 200 \\
$h^{min}$ & 3 (minute)\\
$h^{max}$ & 30 (minute)\\
\bottomrule
\end{tabular}
\label{tab:params}
\end{table}

\subsection{Route generation}
We first demonstrate the computational performances for the route generation, and the results are summarized in Figure~\ref{fig:cpu_routes}, where we vary the time threshold value $\lambda$ and compare how the computational time scales with increasing time threshold. An inconvenience cost of 500 seconds is added to the HHMoD connect approach. And the results reported here are for the computational time required to cover all passenger demand in the study area. For both HHMoD connect and HHMoD only cases, the heuristic methods can generate the K-MCR in only a few seconds. Whereas in the exact method, the computational time for route generation scales exponentially with increasing time threshold since a higher threshold will result in more number of accessible nodes in the reachable set at each stop. As for HHMoD connect, we observe that it is much more efficient than the HHMoD only scenario, though the computational time still scales exponentially. The reason is that, by connecting to the subway system, certain candidate stops in the reachable set can be directly included in the final solution as long as they can be reached through a transfer to the existing transit system. This helps to reduce the number of nodes to be further branched on, thus saving computational time. However, the extent of the reduction that can be achieved may vary depending on the time threshold of $\lambda$, the inconvenience cost for transfers, and also the layout of the existing transit systems. In Figure~\ref{fig:cpu_routes_incov}, we also visualize how the computational time may change with inconvenience cost ranging from 100 seconds to 1200 seconds while holding the $\lambda=1.3$. We observe that the increase in computation time for HHMoD connect has a superlinear relationship concerning the increase in inconvenience cost until it reaches the value of around 900 seconds. Beyond this point, the transfer to the subway system is no longer viable, and the computational time is equivalent to that of the HHMoD only case. On the other hand, we report that it will take more than 30 minutes for the shortest path based heuristic to generate the set of candidate paths, and the generated path is unable to fulfill all the passenger demand in the study area (as can be seen in Figure~\ref{fig:coverage_routes_compare}). In this regard, the shortest-path-based heuristic may not apply to the route generation problem in our case. Instead, the developed exact approaches can generate high-quality solutions efficiently with reasonable $\lambda$ values. In the case where a high $\lambda$ is required, one can use the developed heuristic approaches to generate the K-MCR greedily in real-time. 

\begin{figure}[h!]
    \centering
    \subfloat[AM peak]{\includegraphics[width=0.33\linewidth]{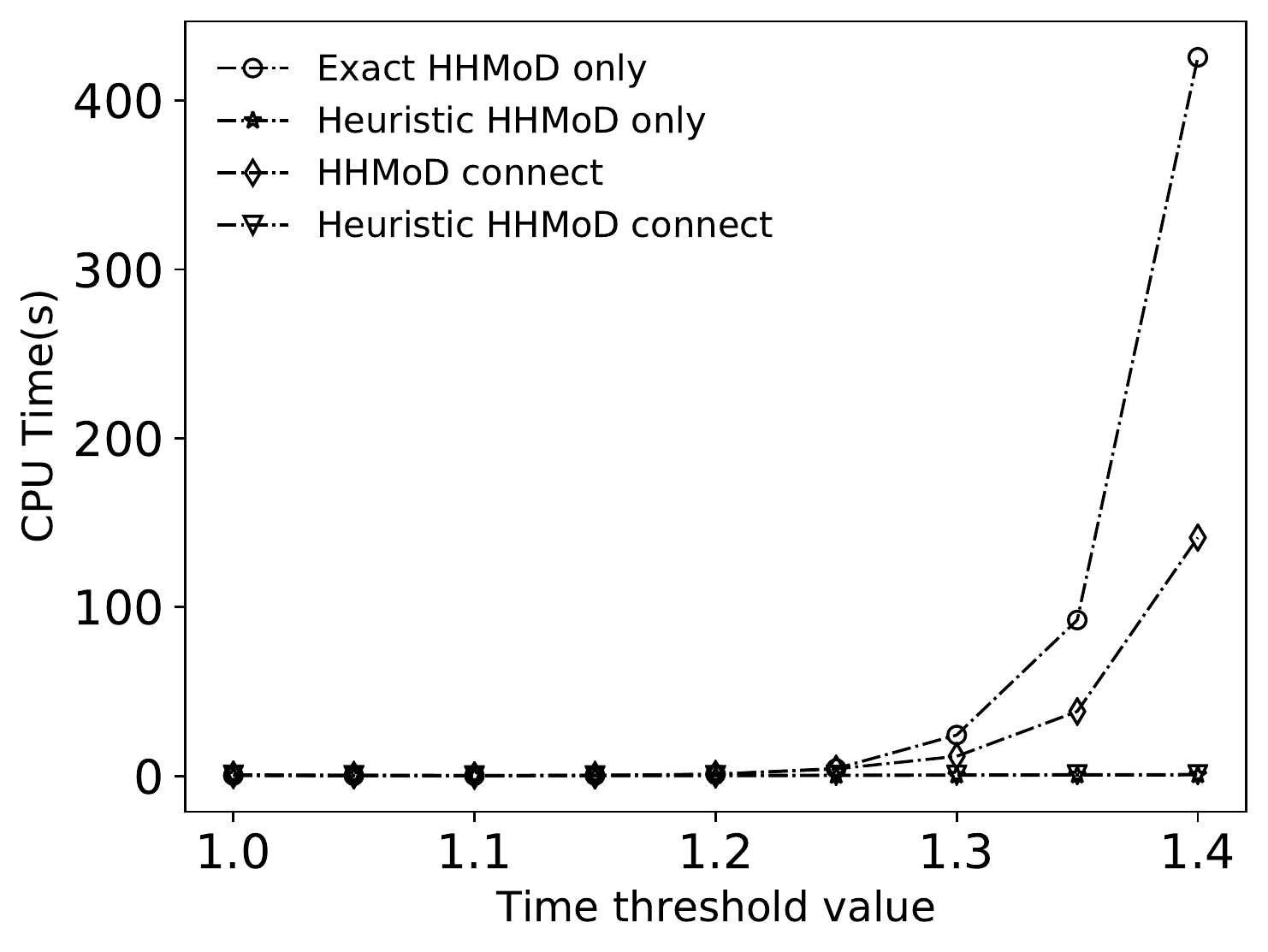}}  
    \subfloat[PM peak]{\includegraphics[width=0.33\linewidth]{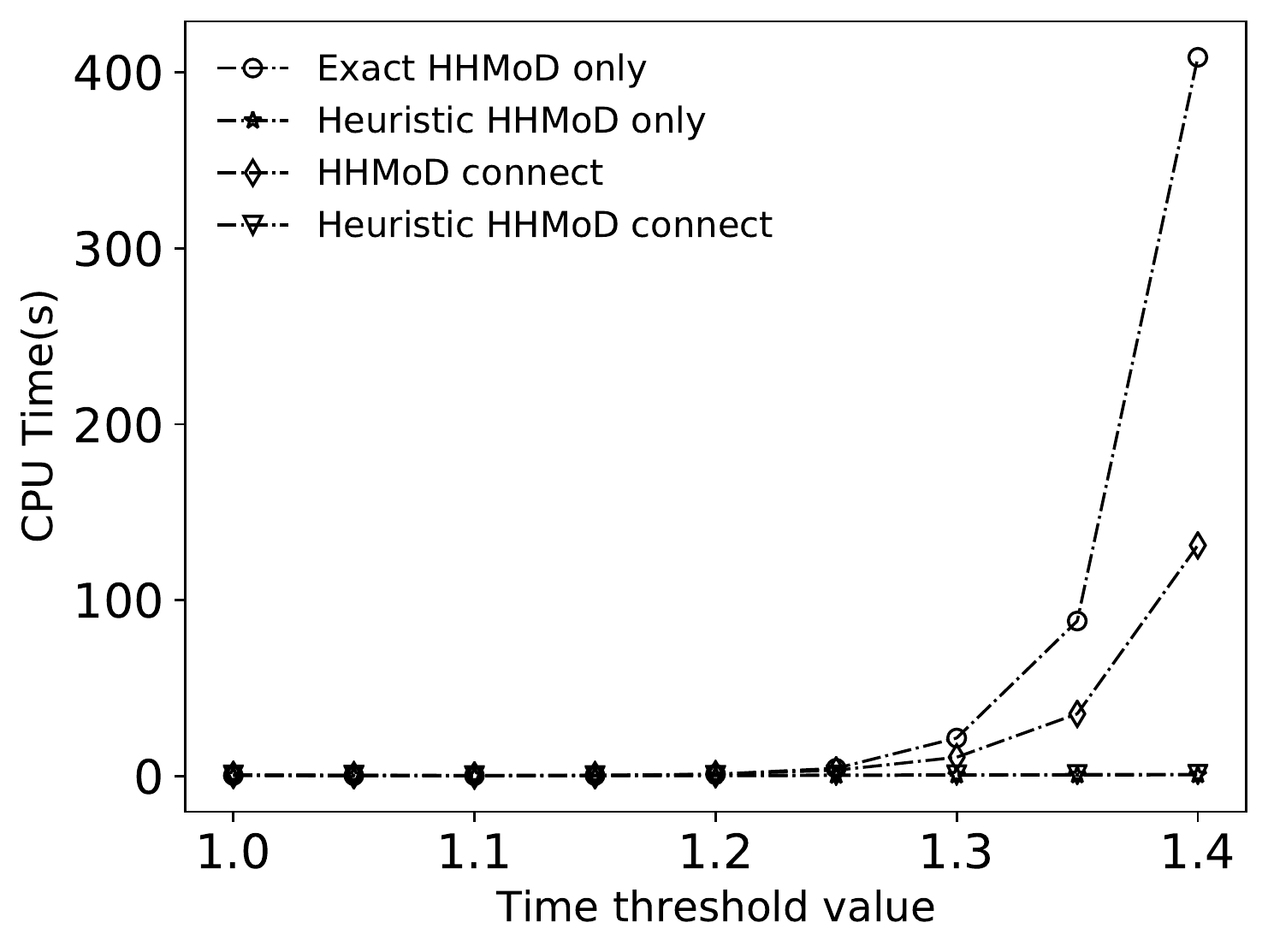}}
    \subfloat[Impact of inconvenience cost for PM peak scenario ($\lambda=1.3$)\label{fig:cpu_routes_incov}]{\includegraphics[width=0.33\linewidth]{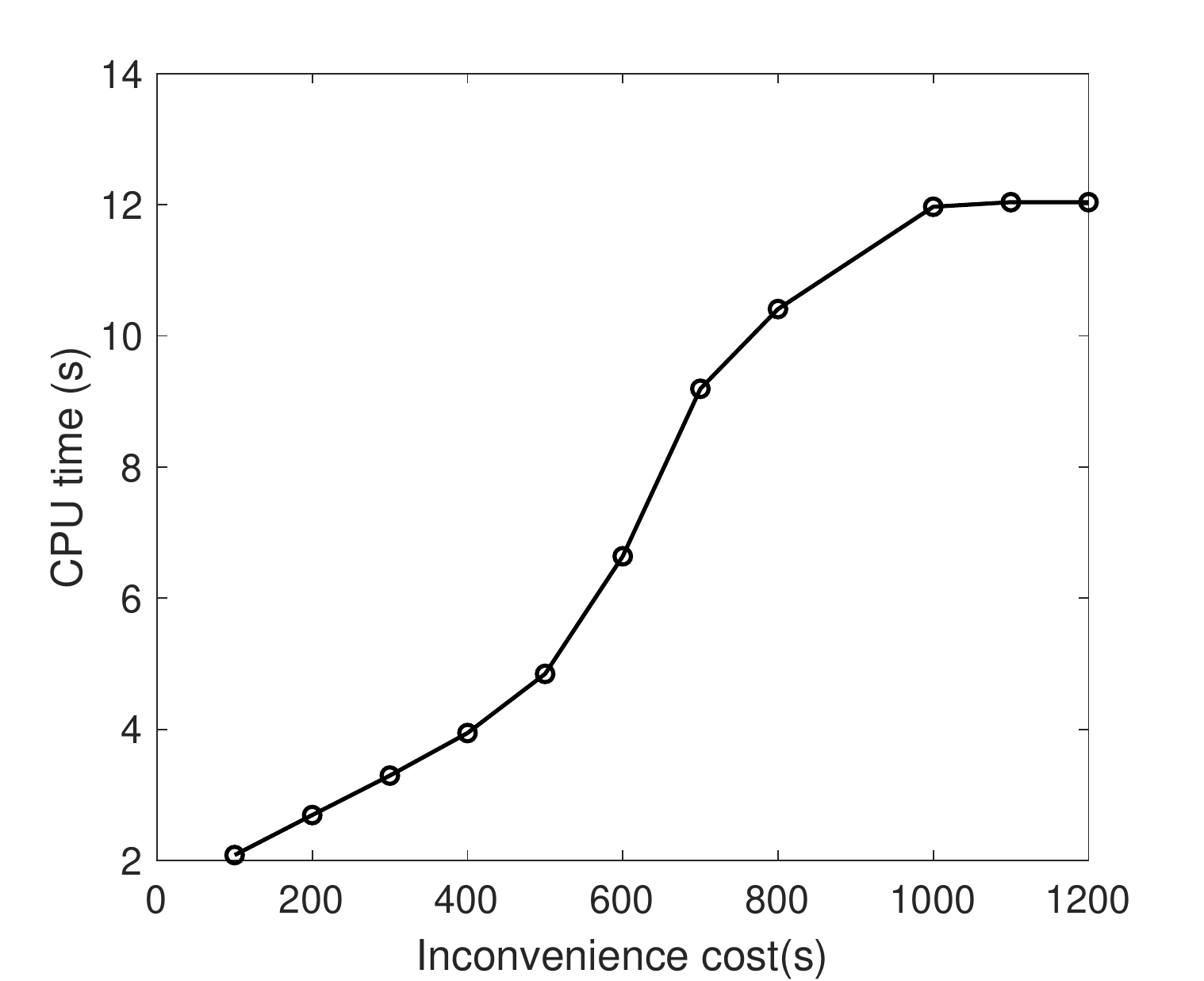}}
    \caption{Comparison of computation performances of the four different algorithms with varying time threshold values}
    \label{fig:cpu_routes}
\end{figure}

We next present the effectiveness of the route coverage algorithms for the four different times of the day and the results are summarized in Figure~\ref{fig:coverage_routes_compare}. The figure visualizes the cumulative passenger coverage rate with an increasing number of identified routes. We find that all exact and heuristic approaches in our study have superior performances compared to the shortest path based heuristic method. While the shortest path based heuristic may achieve similar passenger coverage for the first few generated routes, there is a significant gap in passenger coverage with additional routes generated. Specifically, the shortest path heuristic plateau at the passenger coverage of around 70\% and is unable to reach 100\% passenger coverage even with a large number of routes generated. In contrast, the exact algorithms can reach 100\% passenger coverage with around 32 candidate routes, whereas the heuristic algorithm can also cover all the stops with 38 to 40 routes. And all exact and heuristic algorithms are found to deliver consistent performances with varying passenger demand profiles at different times of the day.

\begin{figure}[h!]
    \centering
    \subfloat[AM peak]{\includegraphics[width=0.33\linewidth]{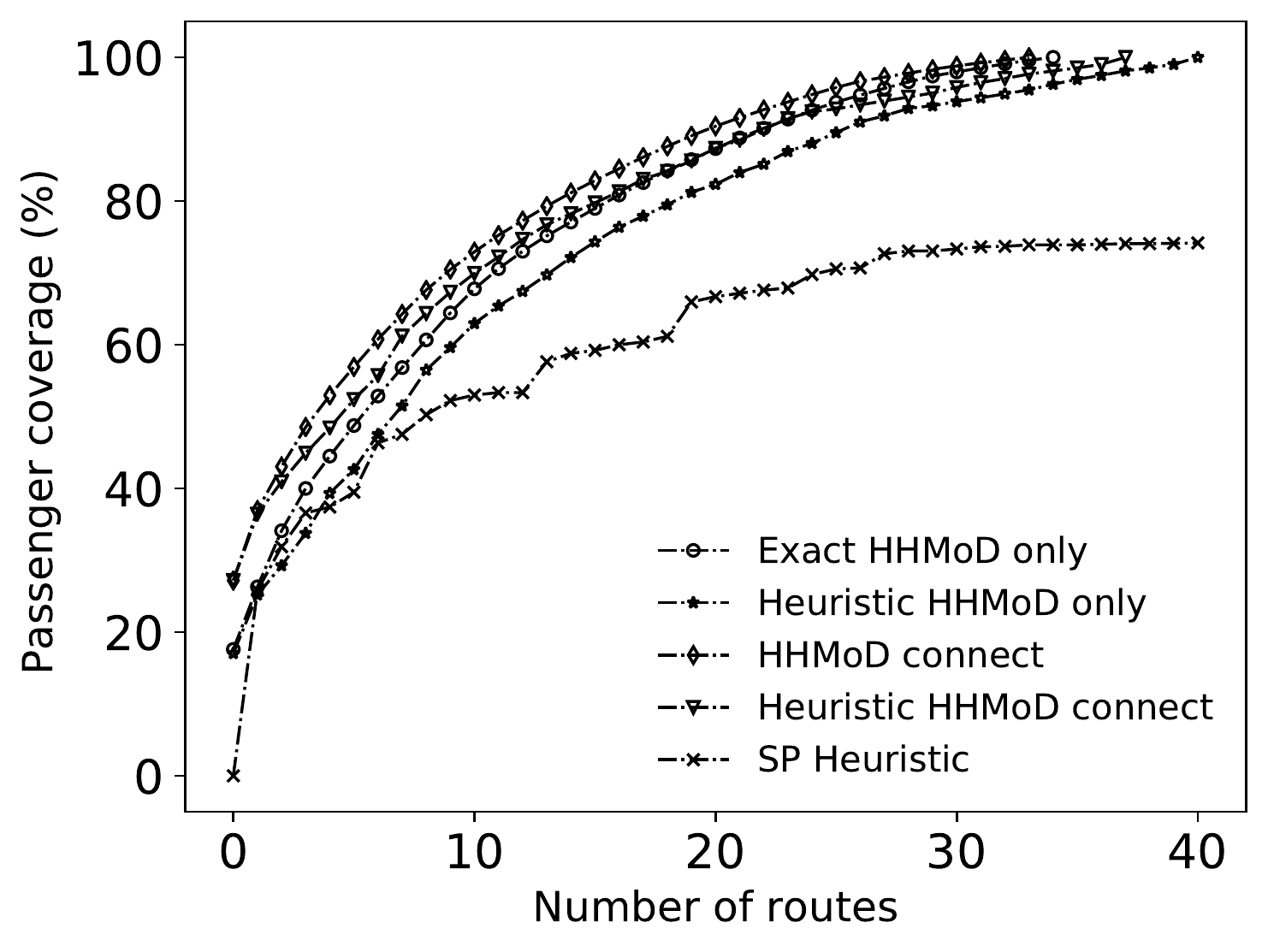}}
    \subfloat[off-peak]{\includegraphics[width=0.33\linewidth]{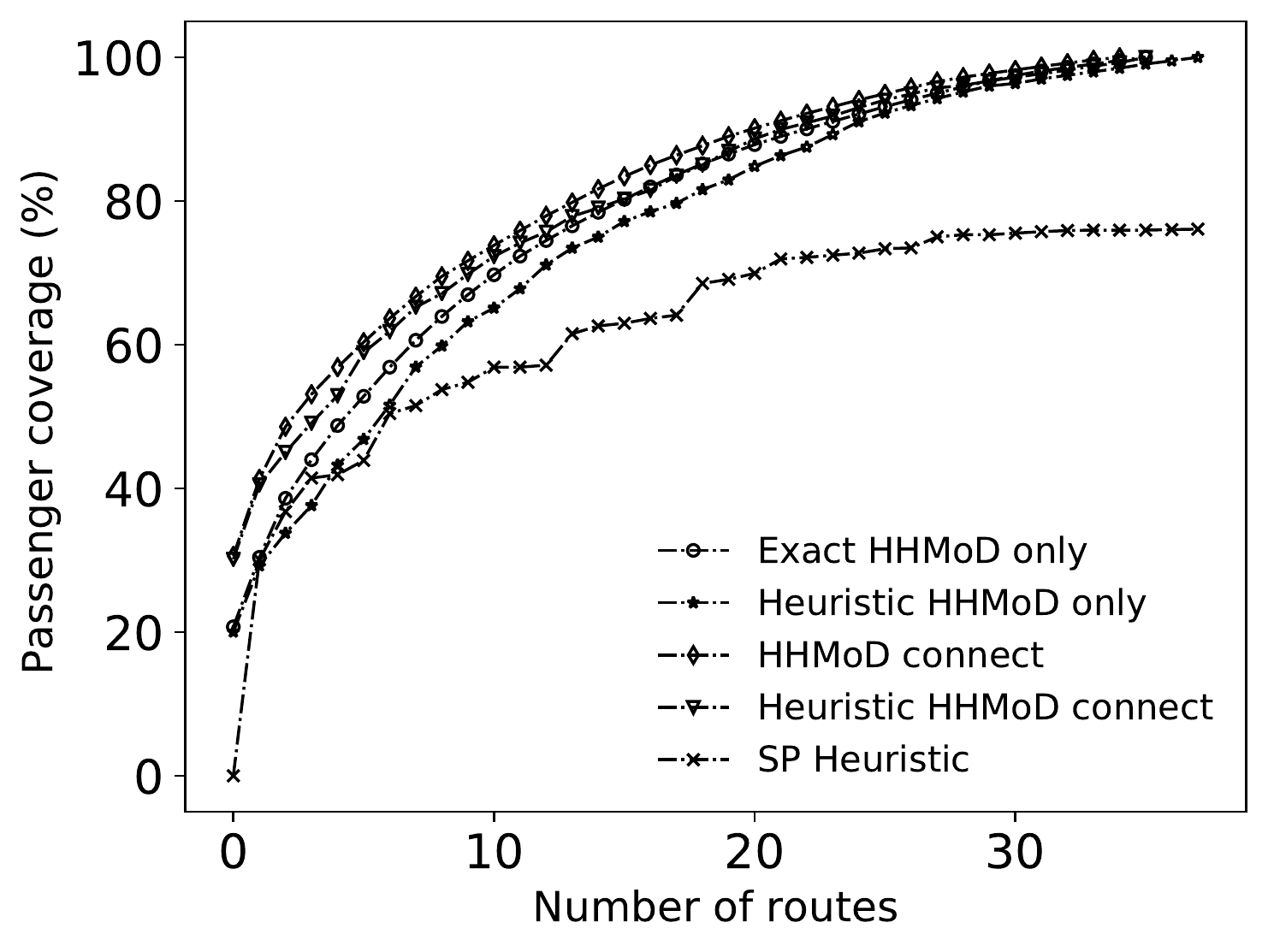}}\\
    \subfloat[PM peak]{\includegraphics[width=0.33\linewidth]{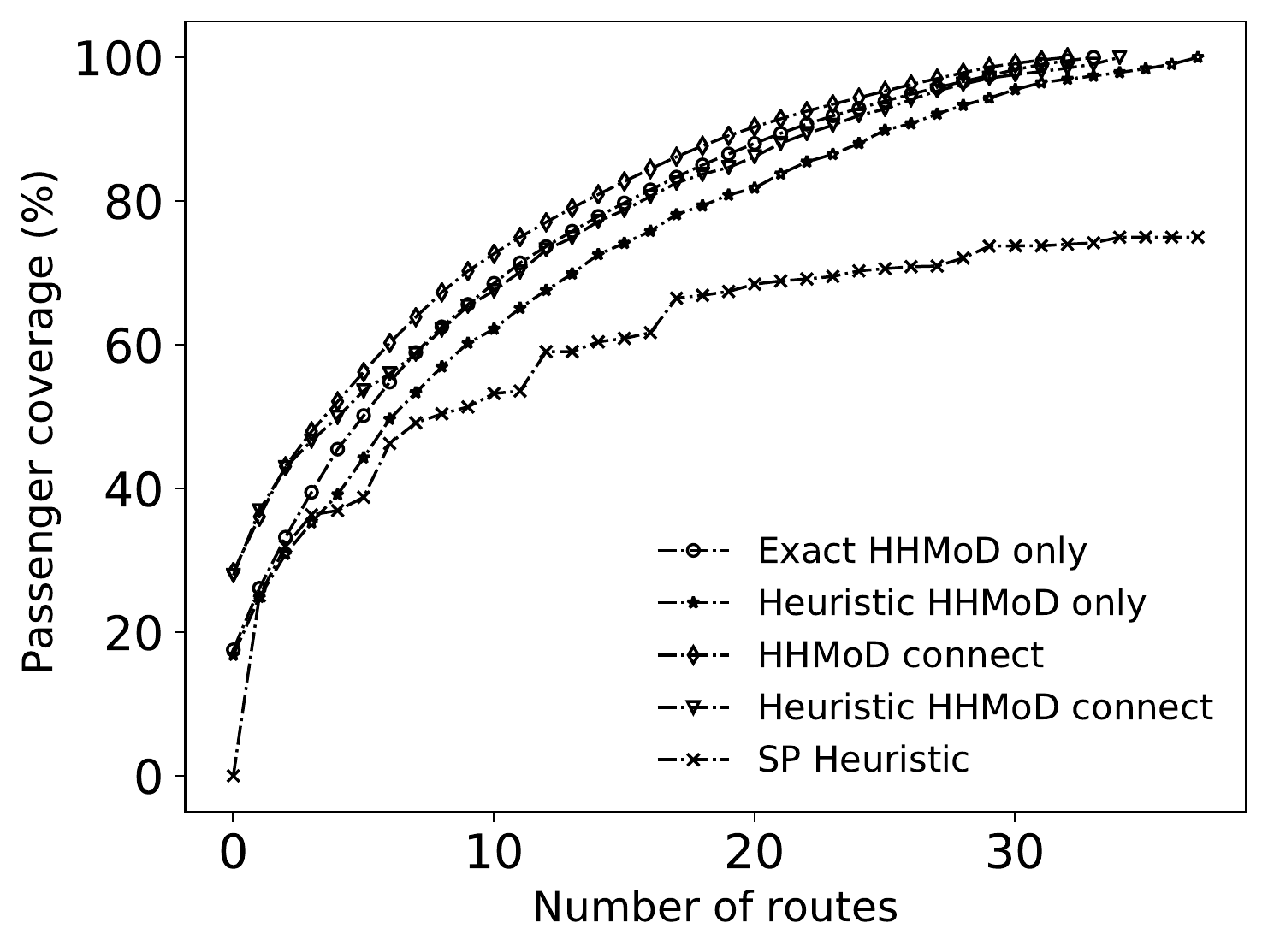}}
    \subfloat[Night time]{\includegraphics[width=0.33\linewidth]{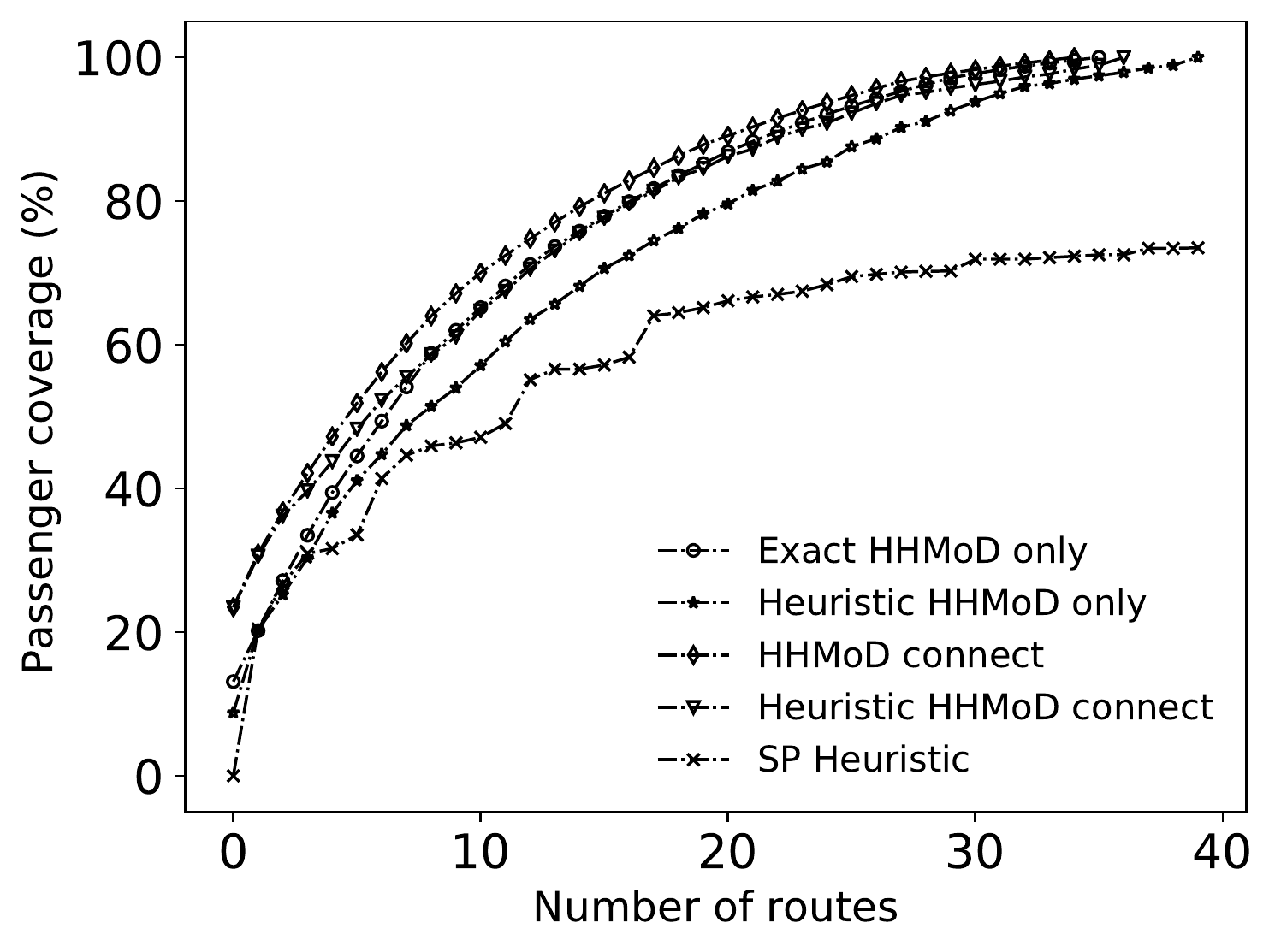}}
    \caption{The passenger coverage level (from hub) with respect to the number of routes generated, with $\lambda=1.3$.}
    \label{fig:coverage_routes_compare}
\end{figure}
\begin{figure}[h!]
    \centering
    \subfloat[AM peak - 5 routes coverage]{\includegraphics[width=0.33\linewidth]{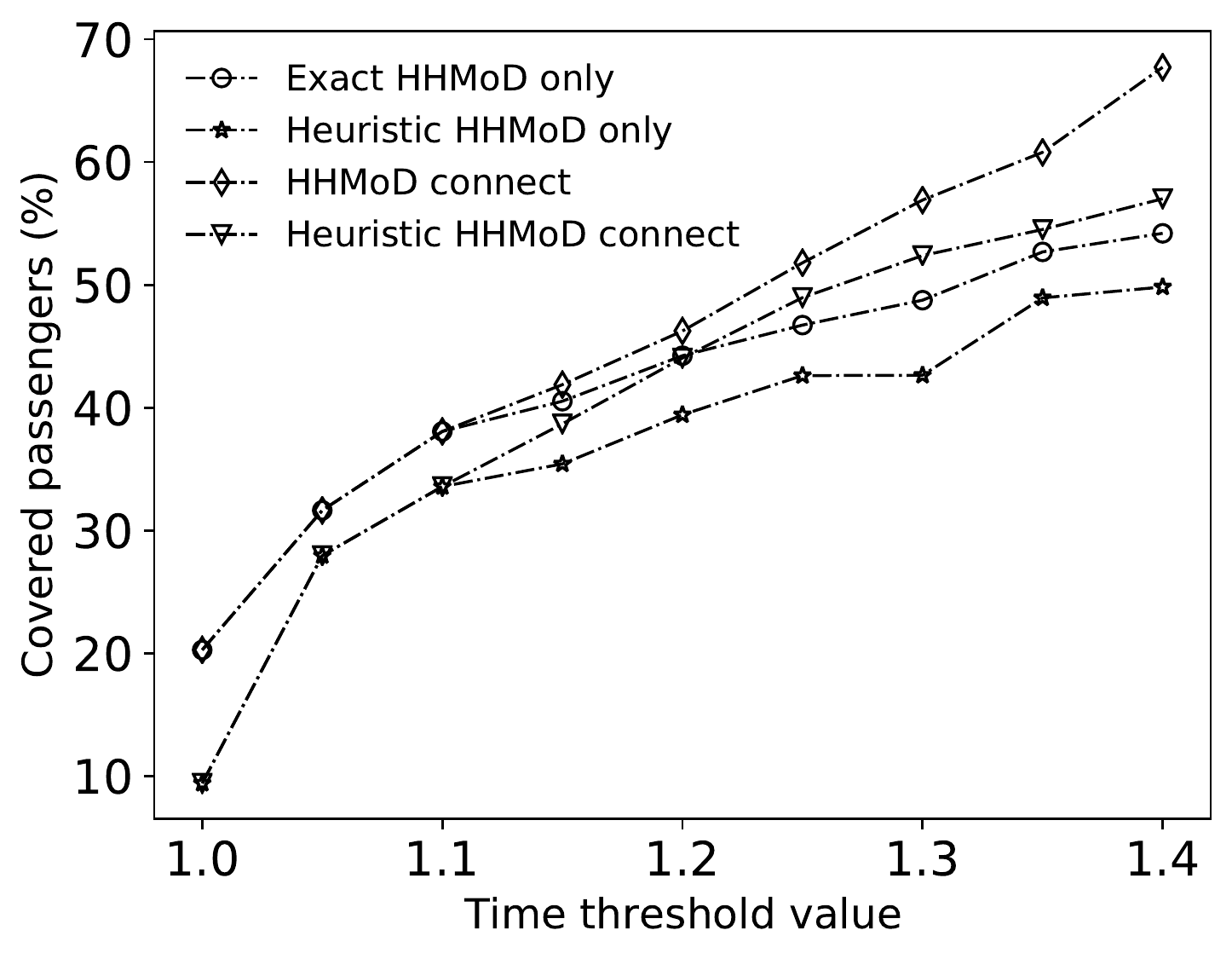}}
    \subfloat[PM peak]{\includegraphics[width=0.33\linewidth]{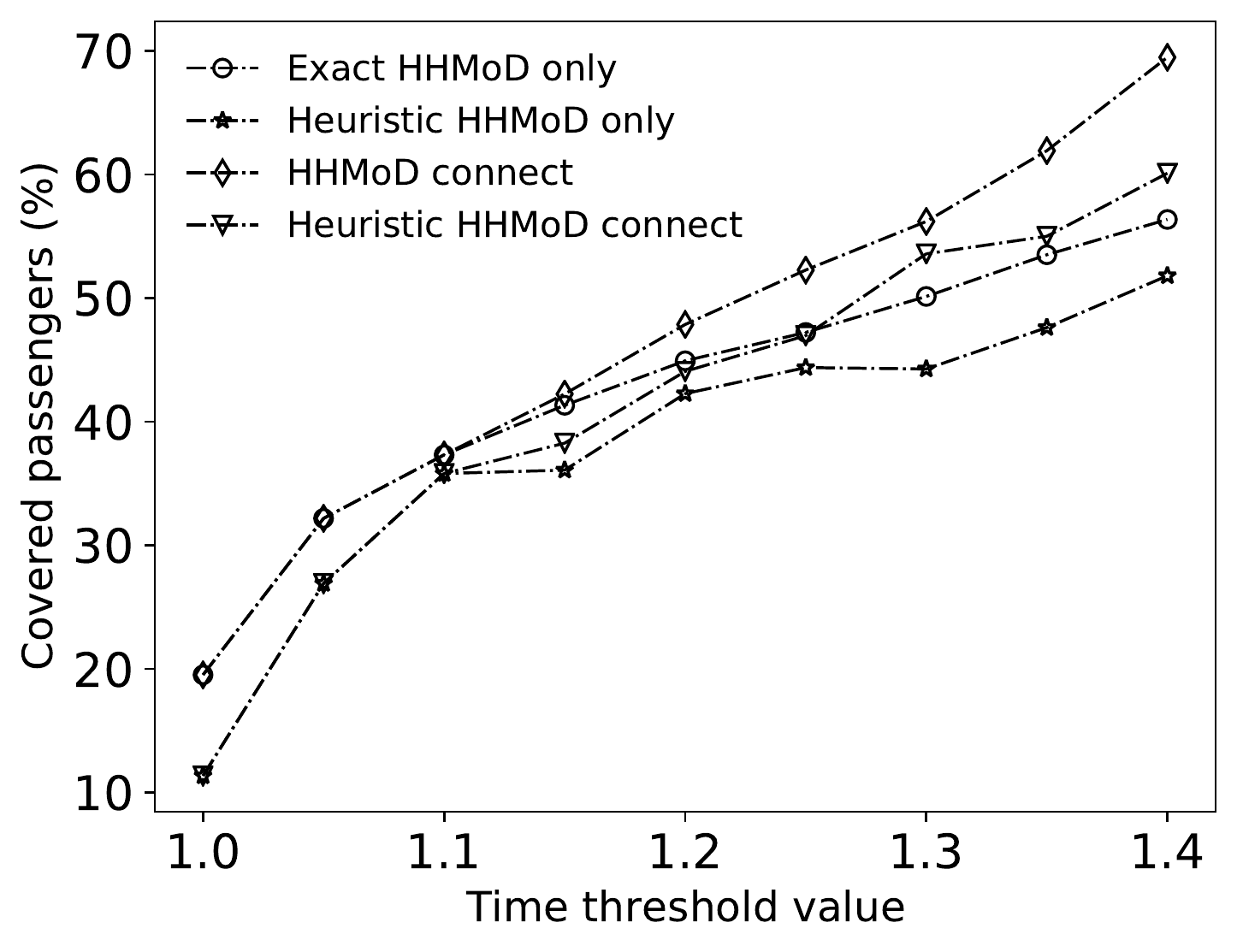}}\\
    \subfloat[AM peak - number of routes to cover]{\includegraphics[width=0.33\linewidth]{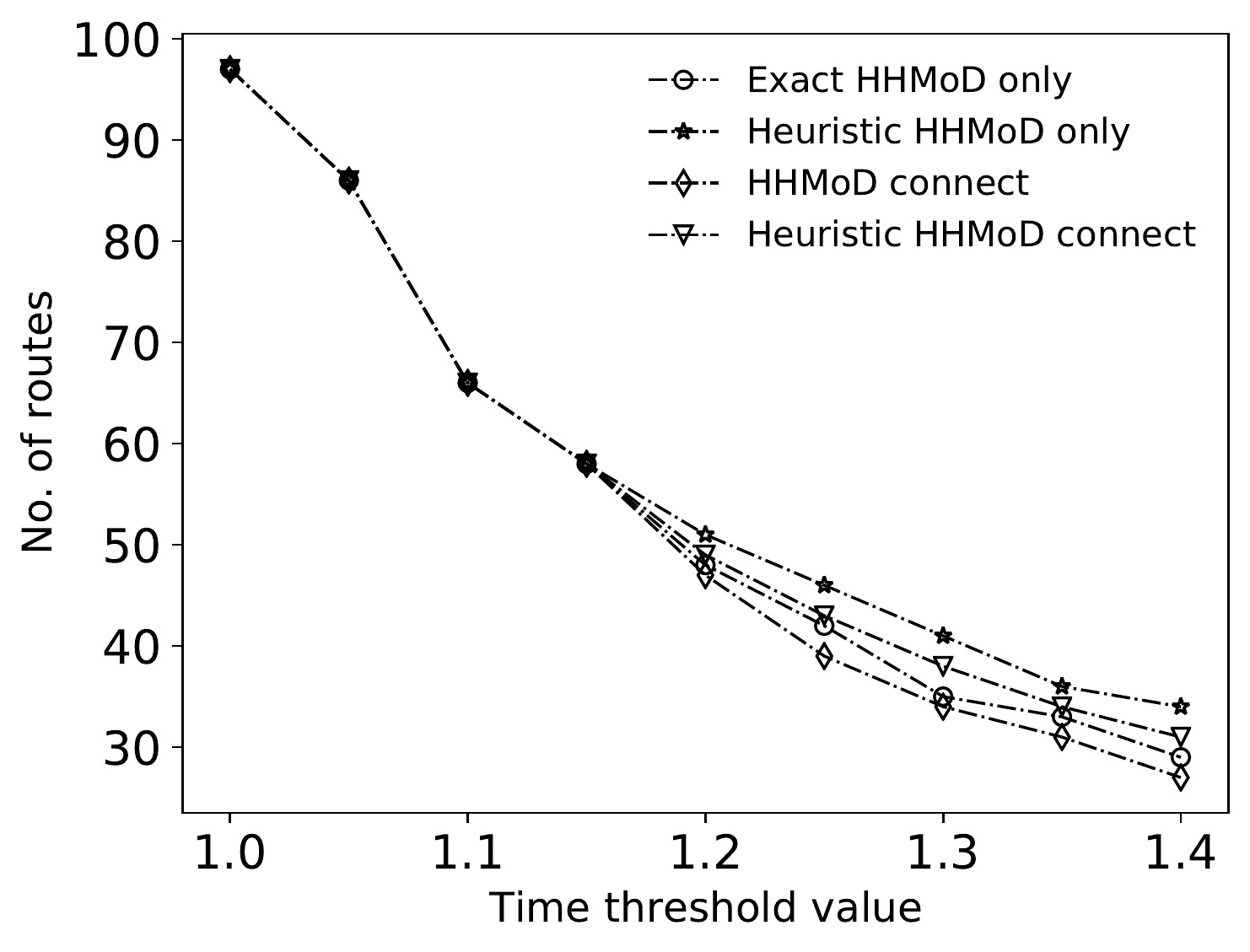}}
    \subfloat[PM peak]{\includegraphics[width=0.33\linewidth]{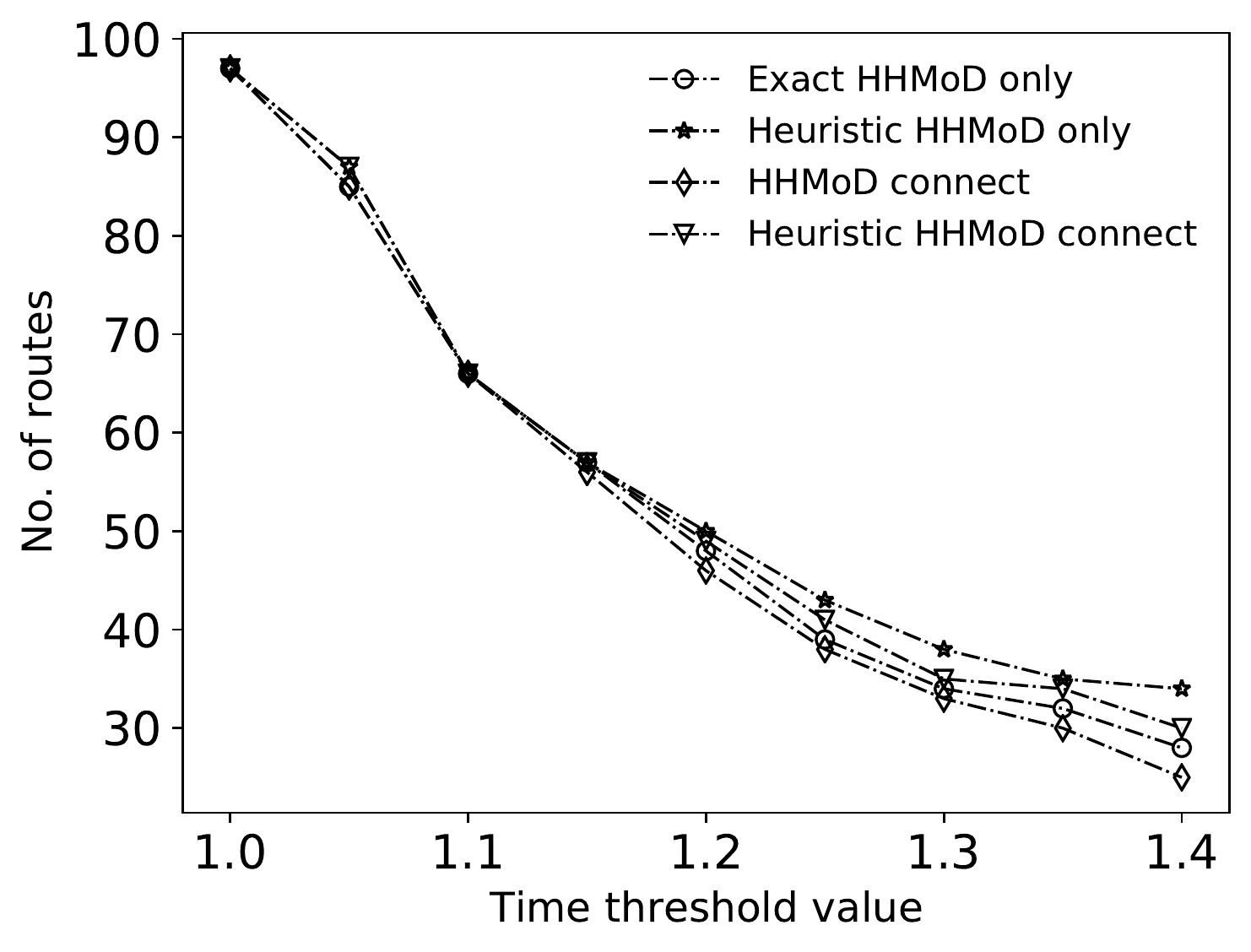}}\\
    \caption{Passenger coverage with top 5 candidate routes}
    \label{fig:coverage_routes_topK_Nroute}
\end{figure}

By varying the $\lambda$ value,  we can further examine the quality of the generated routes based on (1) the demand coverage level of the top candidate routes and (2) the number of routes required to cover all passenger demand. The first indicator is important for decision-makers who seek to make the best use of their limited vehicle resources and the second indicator suggests how decision-makers may maximize the service coverage with minimal cost. Here we only demonstrate the results for the routes generated for passengers departed from the hubs during AM and PM peak periods, and similar performances are observed for other scenarios and for passengers traveling to the hubs. Figures~\ref{fig:coverage_routes_topK_Nroute} (a)-(b) show the total passenger coverage of the top 5 routes with a time threshold value between 1 and 1.4. It can be observed that the exact algorithms for HHMoD only and HHMoD connect have identical performances with a small time threshold value (e.g., $\lambda\leq 1.15$). In these cases, the additional travel time is not sufficient to compensate for the transfer and walking time to and from the subway system and none of the generated paths is connected to the NYC subway. As the time threshold increases, the top 5 routes may cover up to 70\% of the passenger demand by connecting the HHMoD service to the subway system, where the corresponding coverage without such a connection is between 44\% to 55\% depending on the particular time of the day. The results illustrate that a sufficiently high passenger coverage level can be reached with a few routes by the proposed route generation approach, especially during day time, which only requires a reasonable relaxation of the travel time as compared to the direct service from taxis and FHVs. These results highlight the effectiveness of the HHMoD service in serving the passengers at activity hubs and also suggest notable benefits for integrating the HHMoD service with existing transportation facilities. This observation at JFK supports our initial speculation where the HHMoD can be a promising ridesharing solution to consolidates passenger to and from major activity hubs. In addition, we observe that the performances of the heuristic algorithms are 5\% to 10\% inferior as compared to their exact counterparts for the top 5 candidate routes and the resulting passenger coverage of the heuristic HHMoD connect approach may reach over 55\%, which surpasses the coverage of the exact HHMoD only scenario. This indicates that the heuristic algorithms can also generate satisfactory candidate routes and the performances are appealing for large-scale real-world cases considering its much lower computation costs. As for the number of required routes to cover all passenger demand, we observe that fewer than 30 routes may be sufficient with the HHMoD connect approach in all scenarios as shown in Figures~\ref{fig:coverage_routes_allpass} (c)-(d). This represents an additional 20 routes to serve the rest 30\% to 40\% of the passengers that are unsatisfied by the top 5 candidate routes. Without connecting to the subway system, the HHMoD only approach requires 3 to 8 more candidate routes to fulfill all passenger demand as compared to the HHMoD connect approach. And the heuristic methods would require an extra 2 to 10 routes depending on the particular times of the day. 

\begin{figure}[h!]
    \centering
    \subfloat[AM peak]{\includegraphics[width=0.5\linewidth]{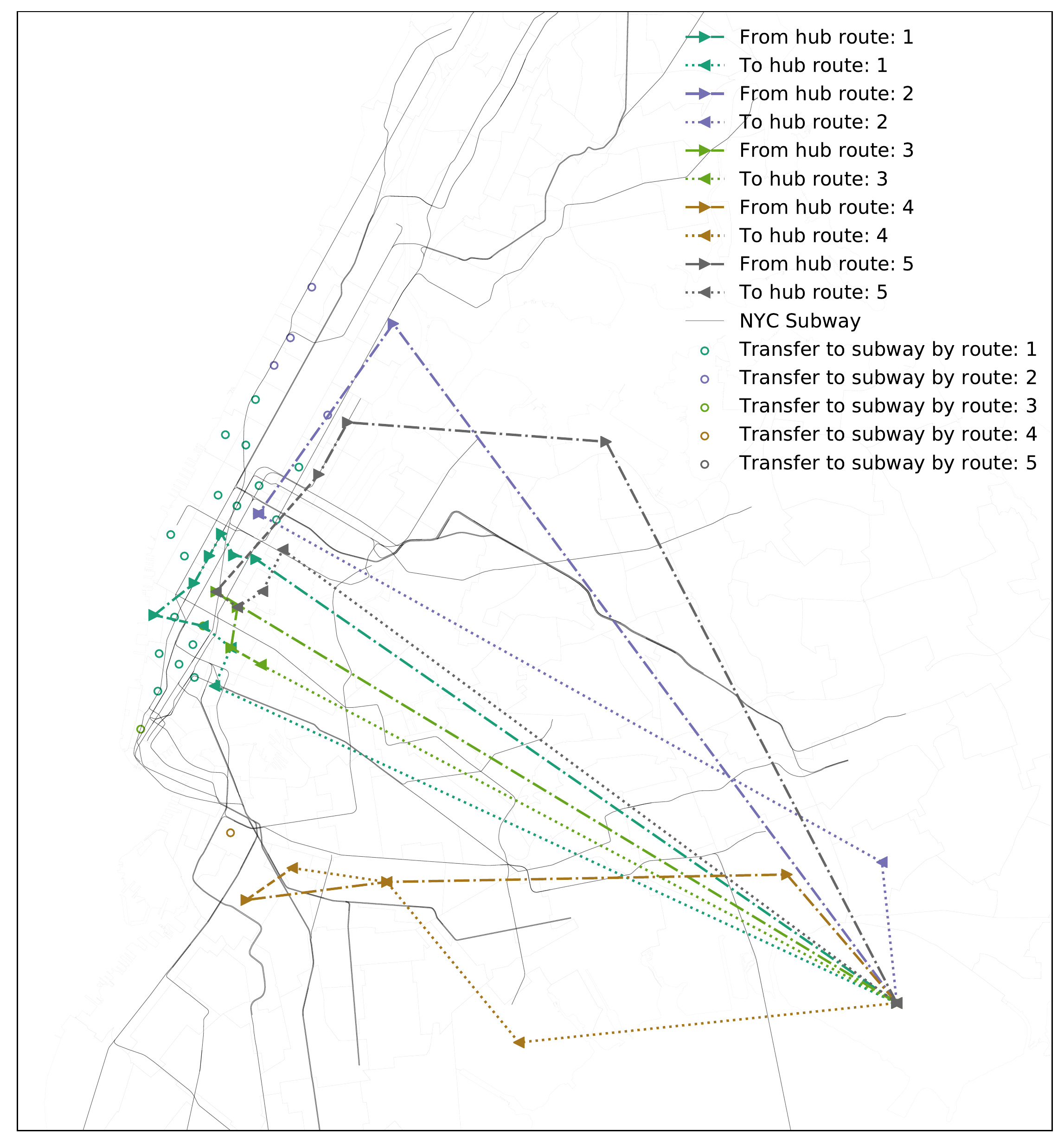}}
    \subfloat[off-peak]{\includegraphics[width=0.5\linewidth]{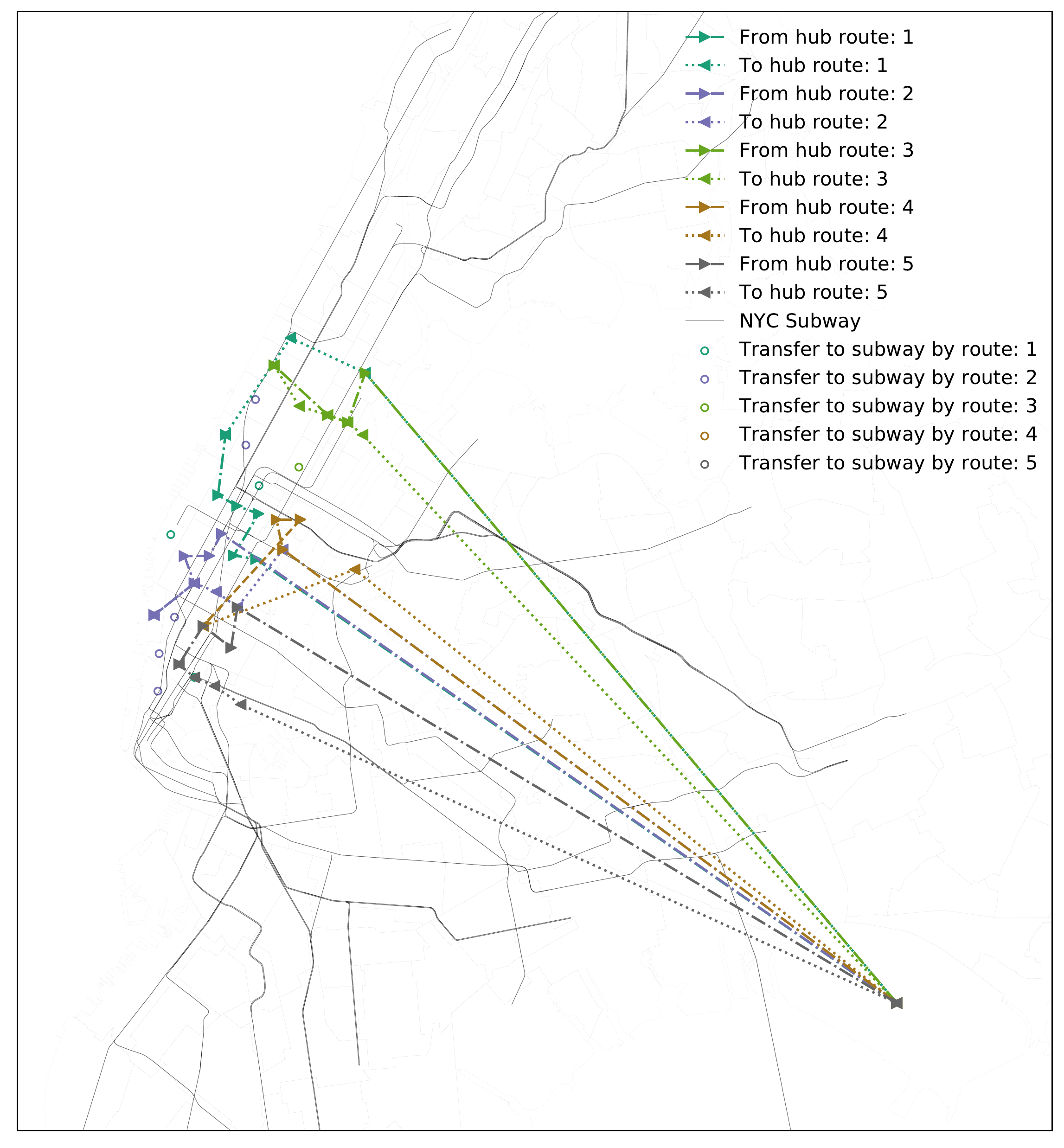}}\\
    \subfloat[PM peak]{\includegraphics[width=0.5\linewidth]{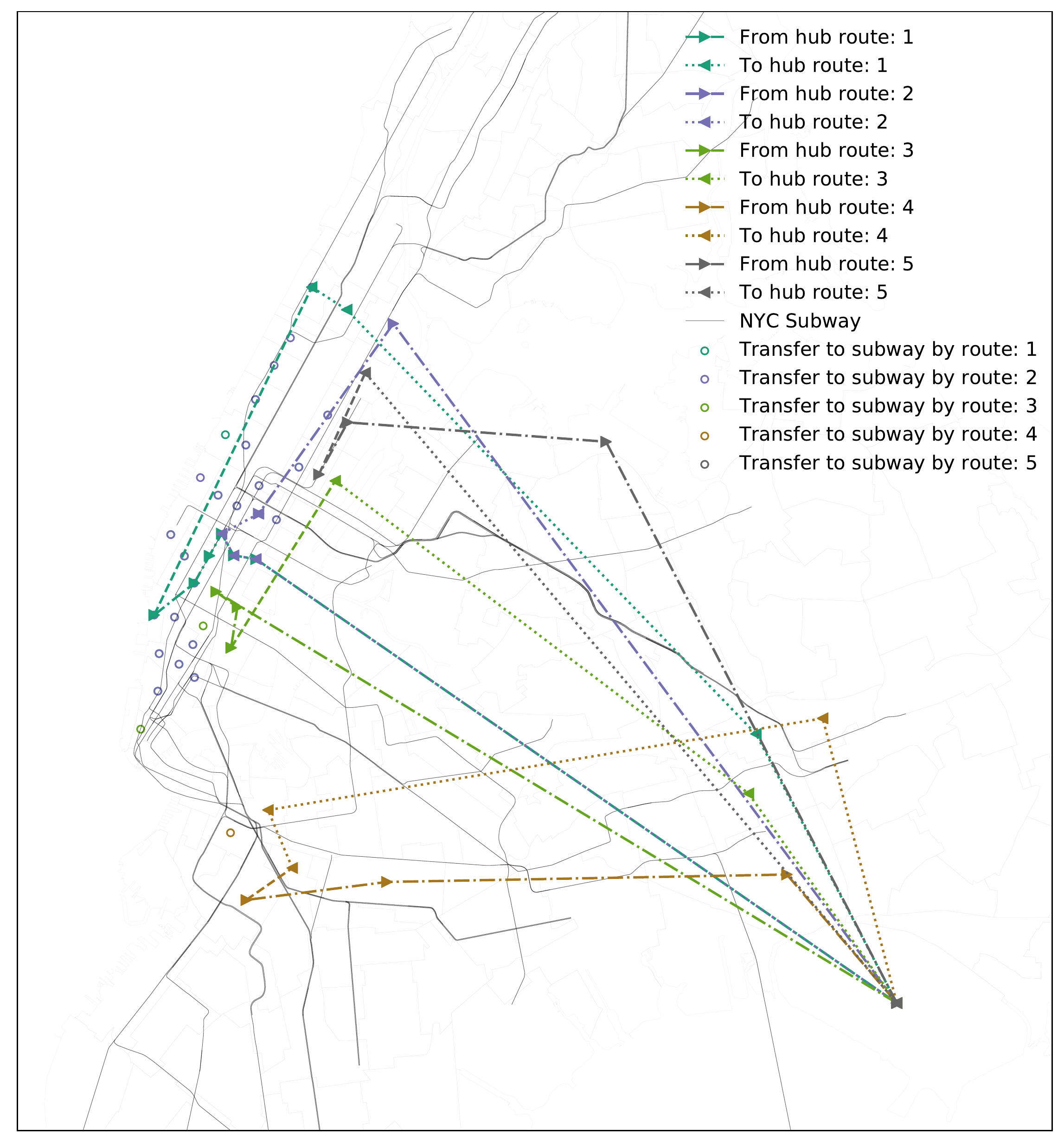}}
    \subfloat[Night time]{\includegraphics[width=0.5\linewidth]{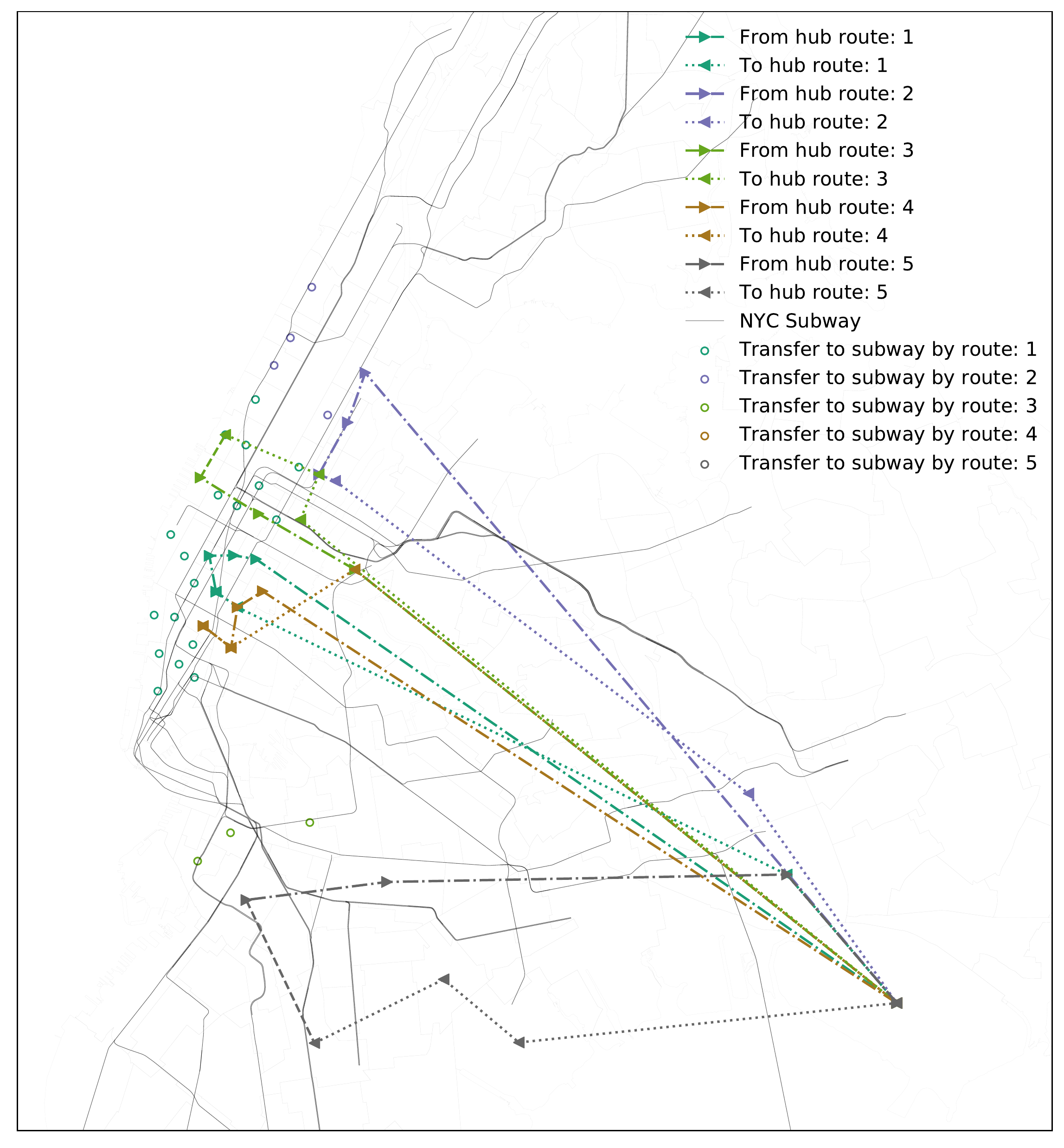}}
    \caption{Visualization of top 5 routes generated with connection to subway}
    \label{fig:viz_DATconnect}
\end{figure}
\begin{figure}[h!]
    \centering
    \subfloat[AM peak]{\includegraphics[width=0.5\linewidth]{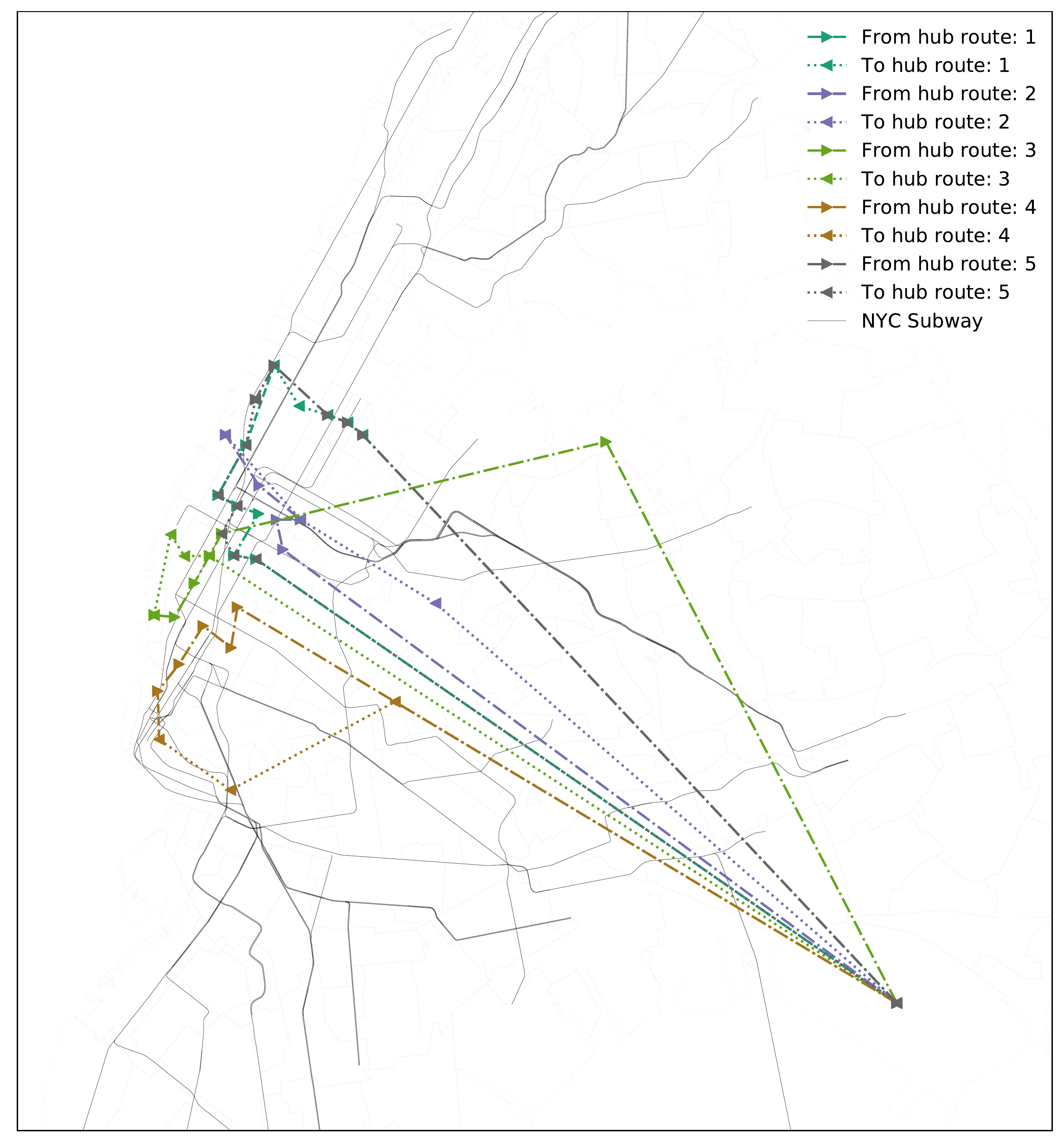}}
    \subfloat[off-peak]{\includegraphics[width=0.5\linewidth]{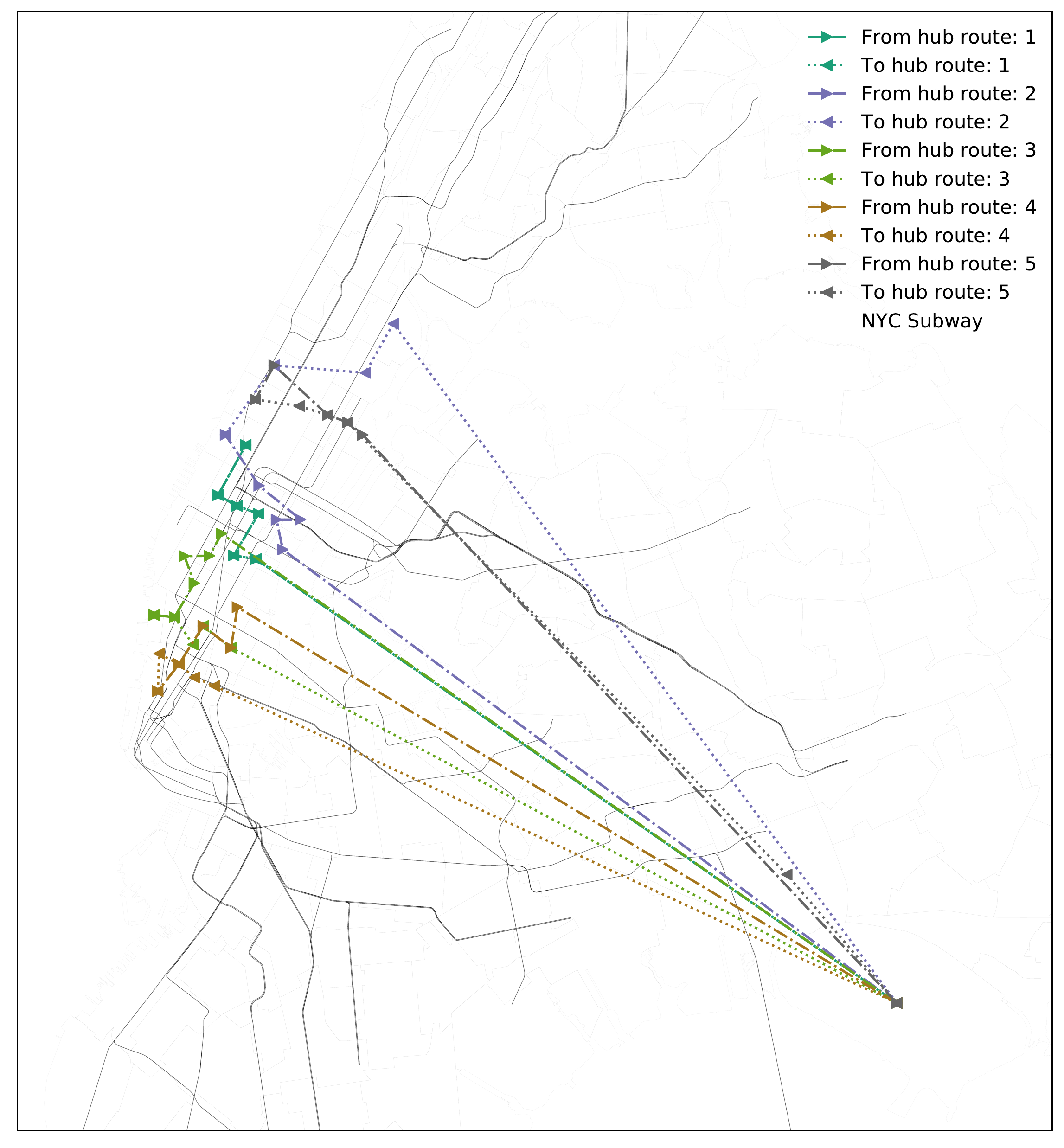}}\\
    \subfloat[PM peak]{\includegraphics[width=0.5\linewidth]{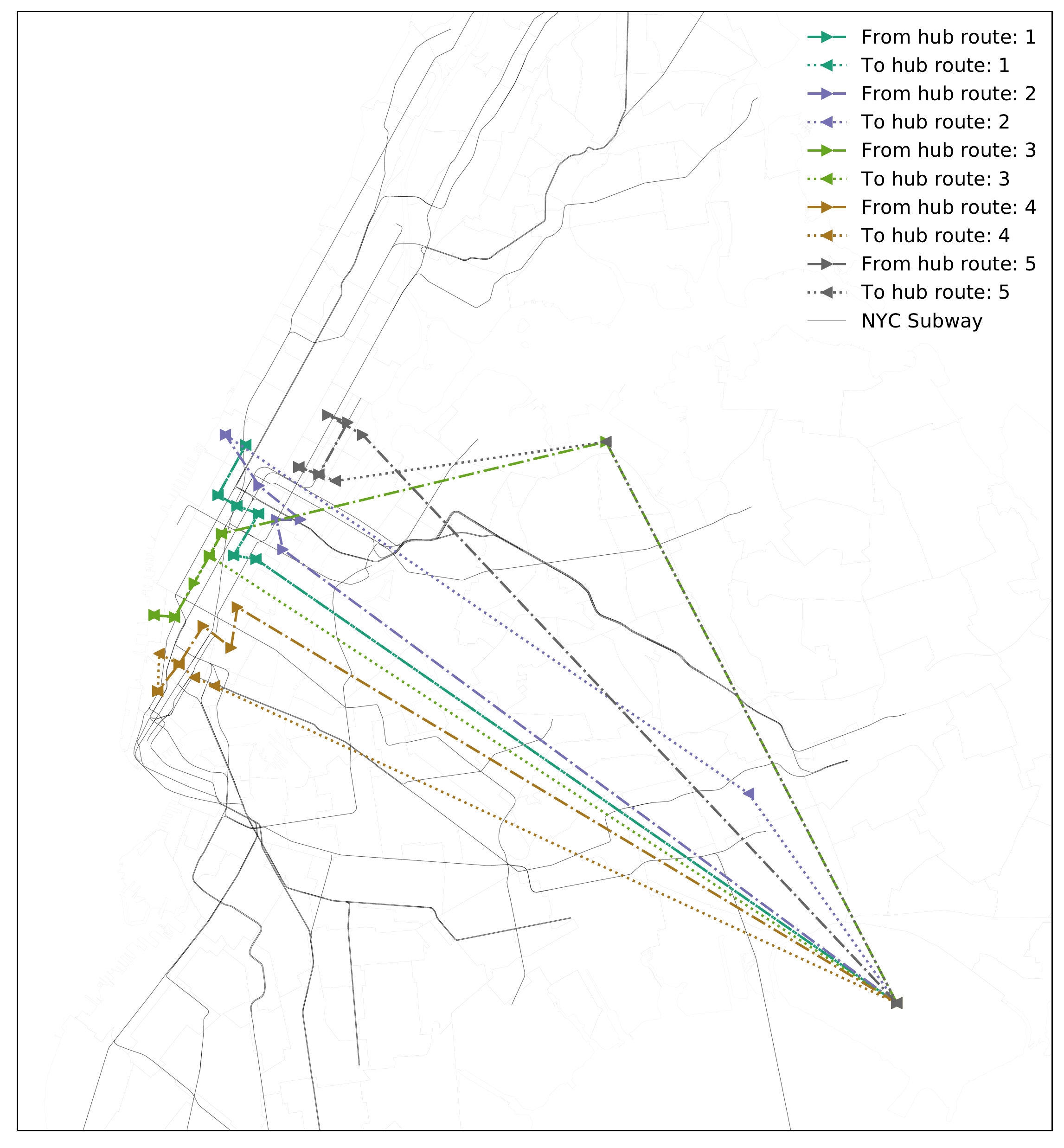}}
    \subfloat[Night time]{\includegraphics[width=0.5\linewidth]{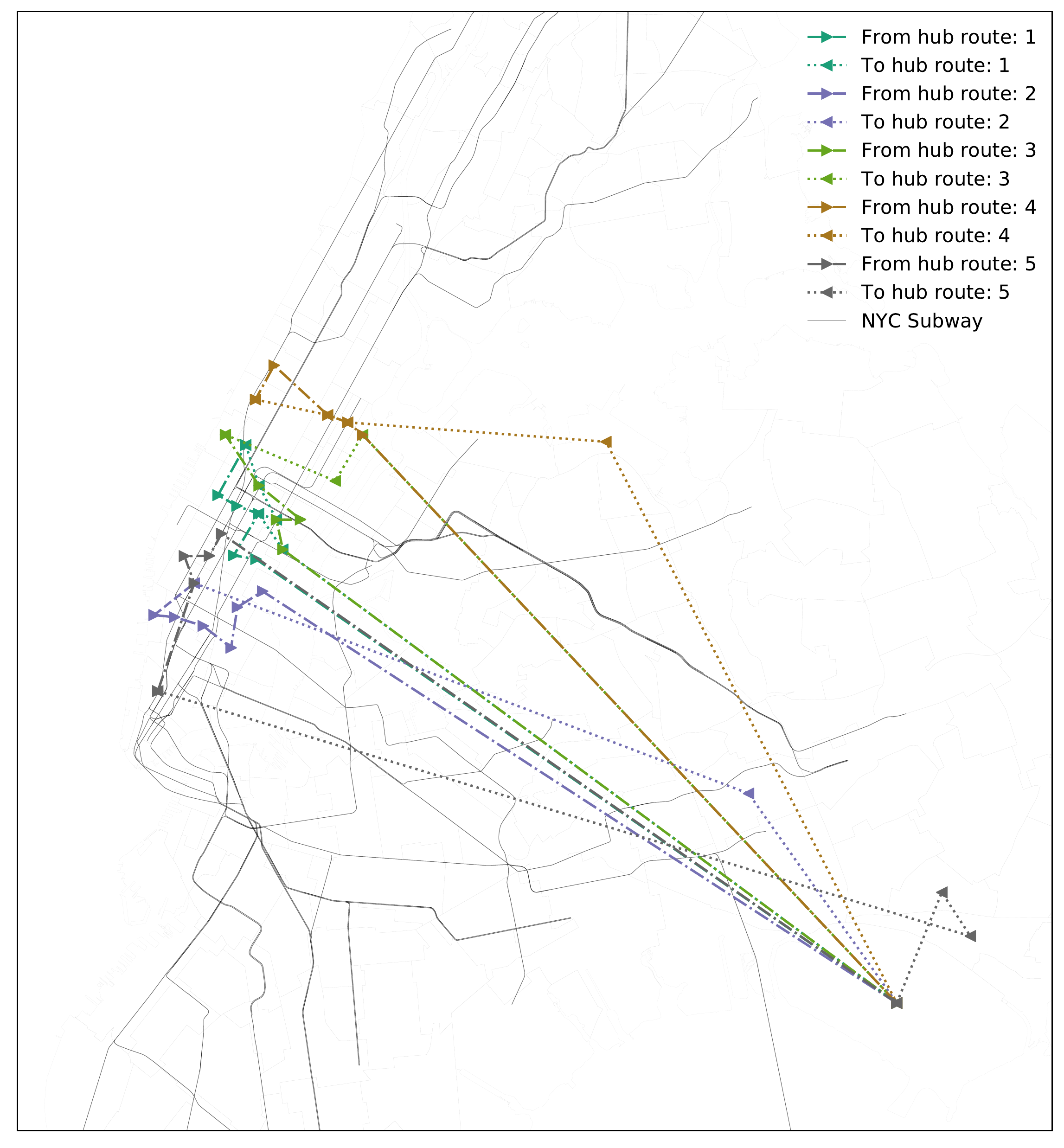}}
    \caption{Visualization of top 5 routes generated without connecting to subway}
    \label{fig:viz_DATonly}
\end{figure}

Finally, by combining the candidate routes to and from the activity hub following equation~\ref{eq:combination}, we arrive at the completed round trip routes for serving the demand to and from the hub, and we visualize the outcomes of the top 5 round trip routes in Figures~\ref{fig:viz_DATconnect} and~\ref{fig:viz_DATonly}. In the figures, we distinguish the segments to and from the hubs with different lines. We also visualize the underlying layout of the NYC subway system and mark the stops that are served by connecting to the subway system in circles. As we can see in the figures, for maximum passenger coverage, the top routes generated for both HHMoD only and HHMoD connect share similar philosophy by prioritizing stops in Manhattan of high passenger demand but also visit several intermediate locations in Queens along the route which is permissible under the travel time constraint. And for all scenarios, the incoming and outgoing routes at the JFK airport are observed to be well-paired to form the round trip loops based on their trip demand and connection distance. From the results, we can also clearly tell the differences between the HHMoD only routes and HHMoD connect routes driven by the distinct philosophies of the underlying algorithms. In particular, the top 2-3 HHMoD connect routes are aligned with the NYC subway lines in Manhattan, and the passenger coverage is maximized by connecting to passengers along multiple subway lines such as lines 1-3 and lines A-F. Unlike the HHMoD only case, the HHMoD connect routes makes fewer stops in the upper west Manhattan areas with the connections to the subway system, which saves travel time from getting stuck in the heavy traffic. This allows for additional resources to operate a dedicated HHMoD route serving the Brooklyn area during AM and PM peak hours as the other routes have already achieved sufficient coverage in Manhattan. We also note that the 3rd route during night time connects to the G line at 21st Station in Queens, making it possible to serve transfer passengers whose destinations are in the west part of Brooklyn. Finally, for both HHMoD only and HHMoD connect, the direct connecting segment between the JFK airport and the LGA airport is observed to satisfy the passenger demand that travels between the two airport hubs during AM and PM peak hours.

\subsection{Robust route scheduling}
With the K-MCR finalized, we next discuss the results for the robust route scheduling problem under demand uncertainty, and here we only focus on the scheduling of the HHMoD connect routes due to its superior performances in the route generation step. As the input for the robust scheduling problems, the candidate HHMOD connect routes are generated with $\lambda=1.3$ that cover all the candidate stops, and we consider that there are 200 available homogeneous vehicles with the capacity of 20 seats (such as medium-sized shuttle vans with rooms for luggage). We use Gurobi 9.0 package to solve the MILP for both master and recourse problems on a PC with 3.6 GHz CPU and 32GB RAM. The stopping criterion is set as the relative gap between the lower bound and the upper bound of the C\&CG algorithm being smaller than $10^{-4}$:
\begin{equation*}
\frac{UB-LB}{UB}\leq 10^{-4}    
\end{equation*}
\begin{figure}[h!]
    \centering
    \subfloat[\label{fig:converge}Example of algorithm convergence for the HHMoD connect during PM peak hours]{\includegraphics[width=0.7\linewidth]{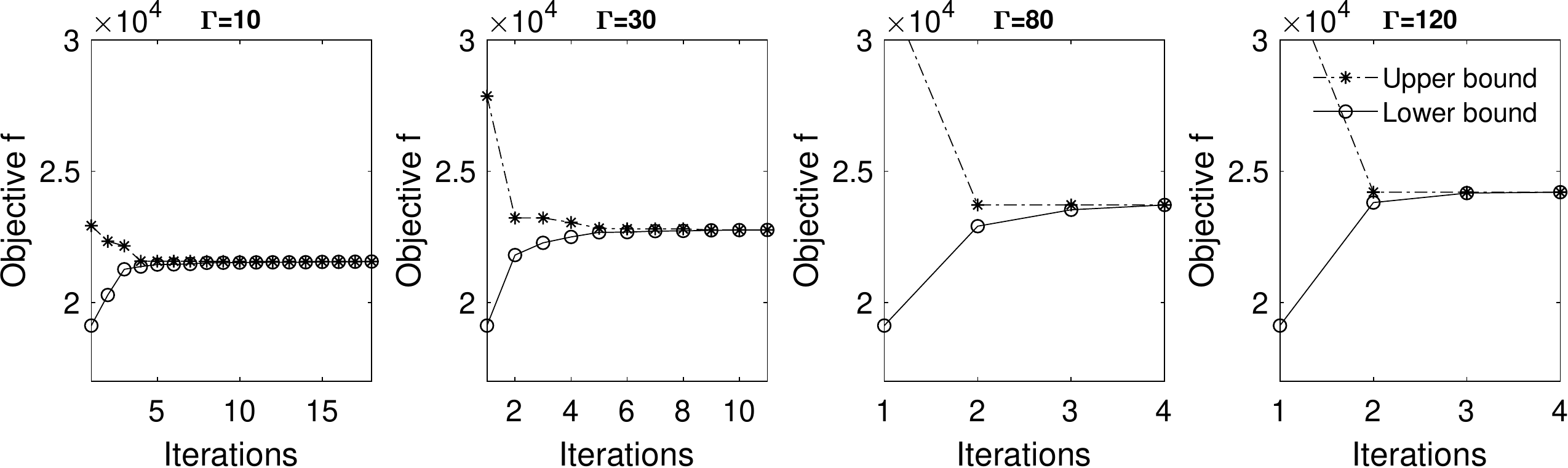}}\\
    \subfloat[CPU Time for different time of day]{\includegraphics[width=0.7\linewidth]{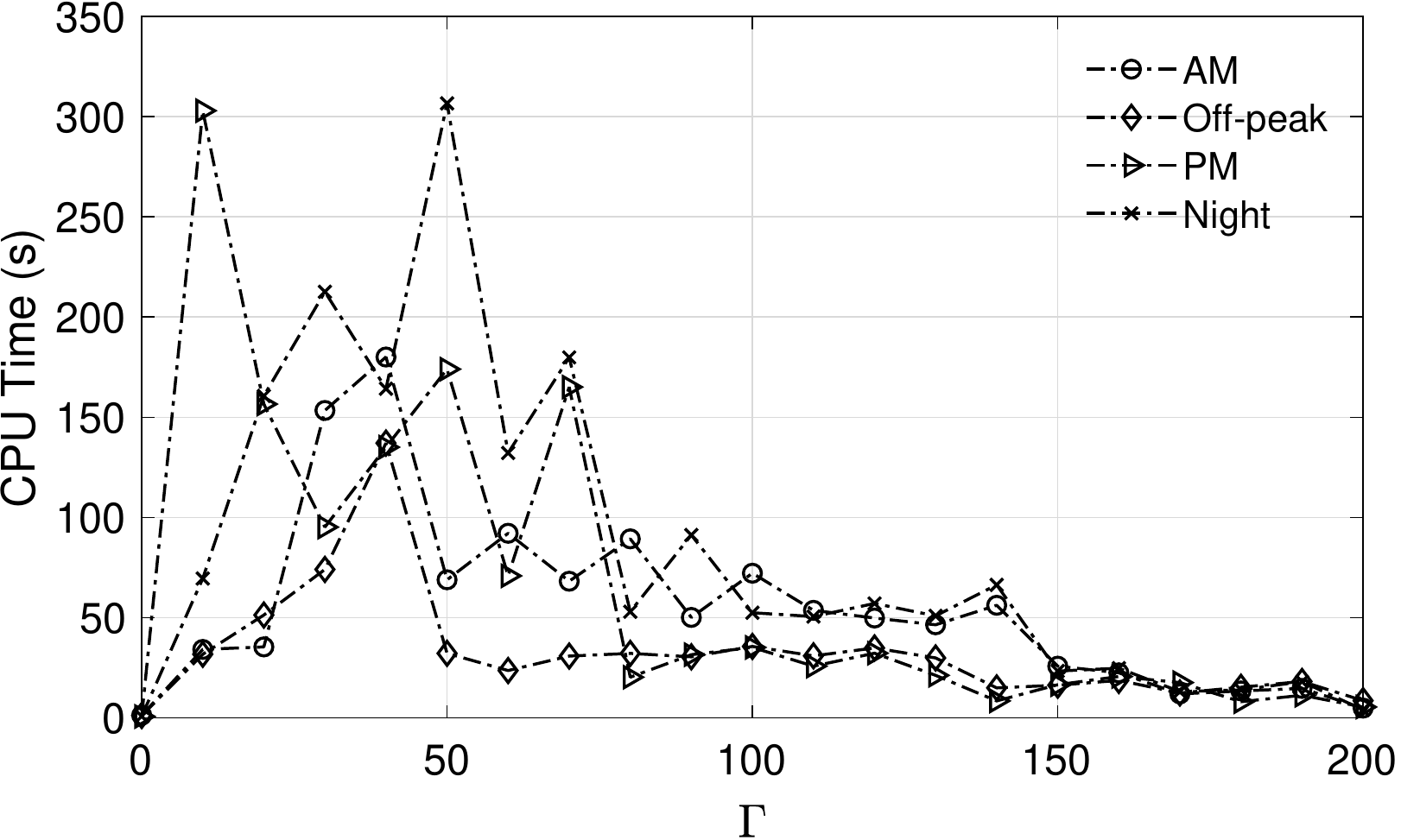}}
    \caption{Convergence and computation time for the robust management of the planned HHMoD service}
    \label{fig:cpu_robust}
\end{figure}
We first summarize the computational performances for the robust route scheduling problem from both the convergence and the computational time perspectives, and the results are shown in Figure~\ref{fig:cpu_robust}. We observe that the two-stage RO converges to the global optimum in 5-6 iterations in most cases. The convergence speed is faster for cases with very small $\Gamma$ (close to 0) or large $\Gamma$ (close to 200) values due to simpler problem structures that are similar to the nominal optimization problem or the single state optimization problem with box uncertainty constraints. In most of the scenarios, the robust route scheduling can be solved to global optimum within 200 seconds. The most computationally expensive scenarios are for $\Gamma$ values in the range between 10 to 70, where we see a higher number of iterations and longer computational time, and a snapshot of the convergence can be found in Figure~\ref{fig:converge} for the off-peak period. We recall that each additional iteration for the C\&CG algorithm will introduce a large number of variables and constraints into the master problem. While the recourse problems are relatively easier to solve, the additional iterations and constraints lead to excessive computation time for the master problem, which accounts for almost 100\% of total computation time in our experiments. On the other hand, even though certain $\Gamma$ values are found to be associated with more iterations and high computational time, the C\&CG converges quickly in the first 8 iterations (such as the cases with $\Gamma=10,30$ for PM peak scenario). Therefore, one may choose to perform early stopping and terminate with a minor optimality gap for reduced computational time with nearly identical performances under demand uncertainty. As for other $\Gamma$ values, a small optimality gap can be achieved in only 3-5 iterations. In this regard, although large-scale two-stage RO is in general difficult to solve, our numerical experiments suggest that the robust route scheduling problem can still be solved within a reasonable time frame. And this supports the applicability of the RO framework for the scheduling of urban HHMoD services over a short-time period. 

\begin{figure}[H]
    \centering
    \subfloat[Change of objective function values\label{fig:rb_total_all}]{\includegraphics[width=0.4\linewidth]{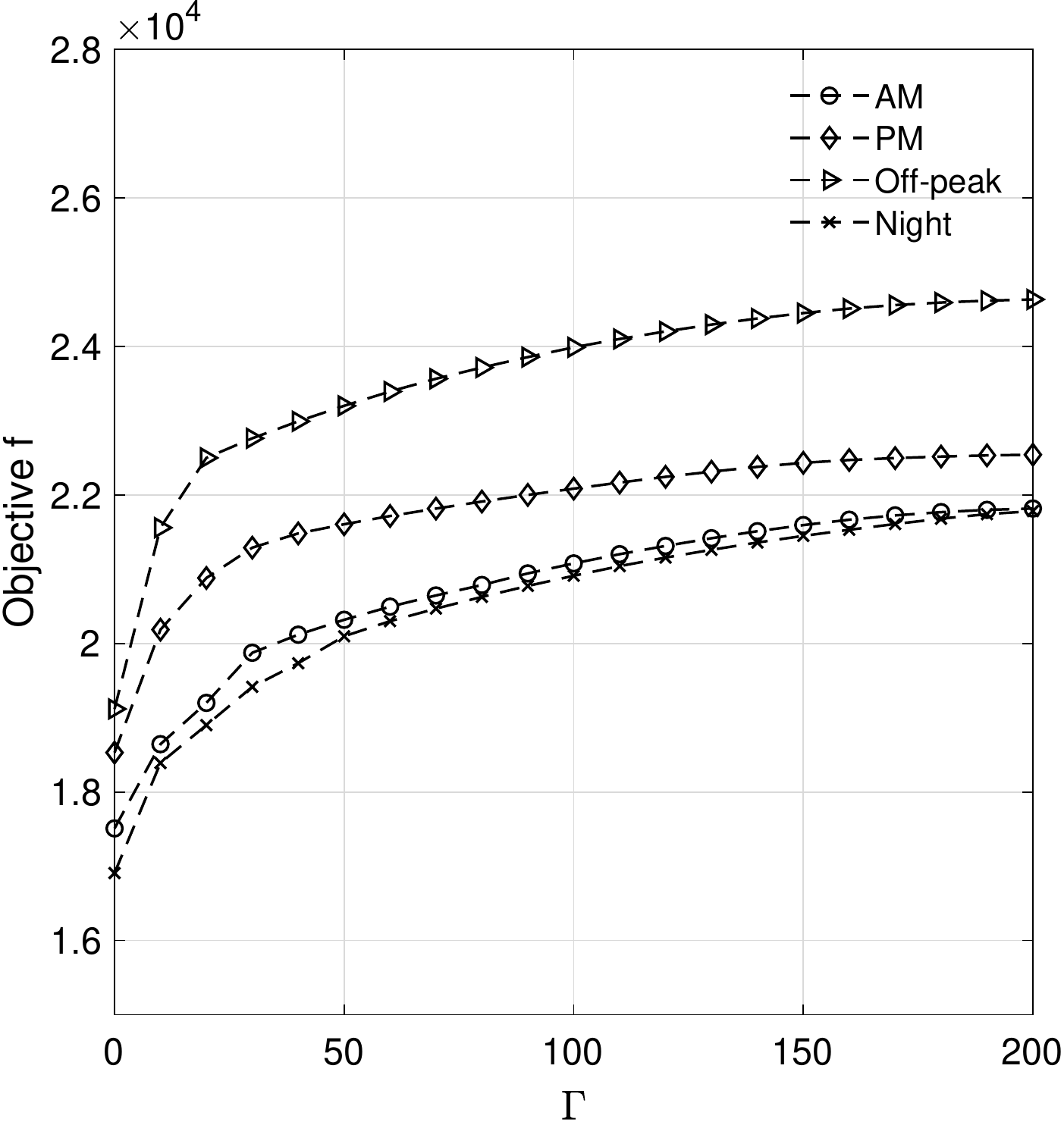}}
    \subfloat[Decompose objective function into $F_{operation}$ and $F_{waiting}+F_{loss}$\label{fig:fun_decom}]{\includegraphics[width=0.5\linewidth]{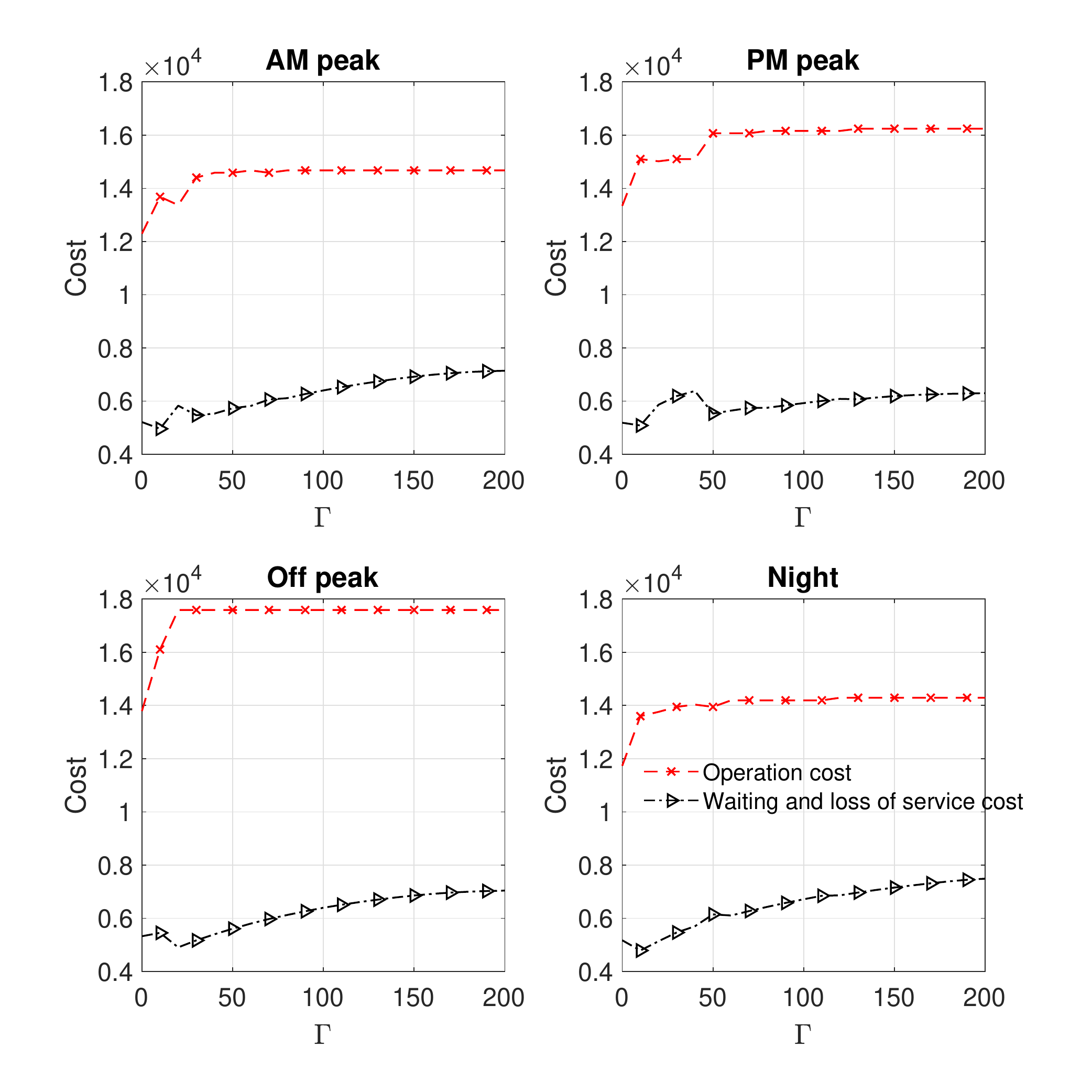}}
    \caption{Change of objective function with respect to different demand uncertainty}
\end{figure}

Next, we discuss the impacts of the budget uncertainty on the performances of the scheduled HHMoD routes, and Figure~\ref{fig:rb_total_all} demonstrates how the objective function may change with increasing budget level. We observe that the objective function value resembles a concave and monotonically increasing function as the level of demand uncertainty increases. This observation is consistent across different times of the day. Based on the concave function shape, we report that majority of the impacts due to the demand uncertainty can be accounted for by modeling the demand uncertainty with a budget of 25\% of the stops ($\Gamma=50$) with a sharp increase in objective function values as compared to the nominal function value ($\Gamma=0$). And the objective function values increase slowly for including the remaining stops in the budget of uncertainty. In addition, the changes in function values also reveal the differences in the robust route scheduling at a different time of the day. Specifically, the off-peak period is found to be associated with the highest cost, followed by the PM peak, AM peak, and night period. The reason is due to that the passenger demand level for PM peak and off-peak period at JFK is higher than that of AM peak and nigh time period. In addition, the off-peak period has the highest cost since the destinations, and origins of passengers to and from the hub are more diverse, and hence requires operating more vehicles (as shown in Figure~\ref{fig:fun_decom}) to avoid the large penalty for the loss of services. And more conservative route schedules can be observed for the off-peak and night periods as the gap of function values between $\Gamma=200$ and $\Gamma=0$ is larger than the other two cases. And such conservative schedules are necessary as the demand distribution tends to have higher variations for the off-peak and night time periods.  

In the end, we present the cost analysis to justify the values of the robust routing schedules by comparing the changes in objective function values under average scenarios and under heavy demand variations. We generate the same 100 random demand realizations for the average scenarios and calculate the expected cost for the robust route schedules with different $\Gamma$ values. The cost-effectiveness in the average case can be measured as:

\begin{equation}
    \bar{G}_{\Gamma,0}=\frac{\bar{F}_{\Gamma}-\bar{F}_0}{\bar{F}_0}
\end{equation}
where $\bar{F}_{\Gamma}$ represents the average objective function values for the planned route schedules with budget $\Gamma$. We also measure the worst-case performances of the robust schedules by considering two demand variation scenarios: (1) the full variation case with all $p_i=1$ and (2) the half variation case with all $p_i=0.5$. And the corresponding cost-effectiveness under worst-case scenario can be computed as:
\begin{equation}
    \hat{G}_{\Gamma,0}=\frac{\hat{F}_{\Gamma}-\hat{F}_0}{\hat{F}_0}
\end{equation}
Here, $\hat{F}_{\Gamma}$ denotes the objective function value under demand variation. The value of $\bar{G}_{\Gamma,0}$ indicates the level of conservatism of the robust solution and the value of $\hat{G}_{\Gamma,0}$ measures the level of robustness of the route schedules as compared to the nominal results. Besides these three metrics, we also include the loss services rate as the percentage of unsatisfied demand over the total realized demand. We present the results on the gap under average and worst cases in Figure~\ref{fig:gap_cost} and summarize the loss of services and the number of operated vehicles in each cases in Table~\ref{tab:cost_analysis}.

\begin{figure}[H]
    \centering
    \subfloat[AM peak]{\includegraphics[width=0.25\linewidth]{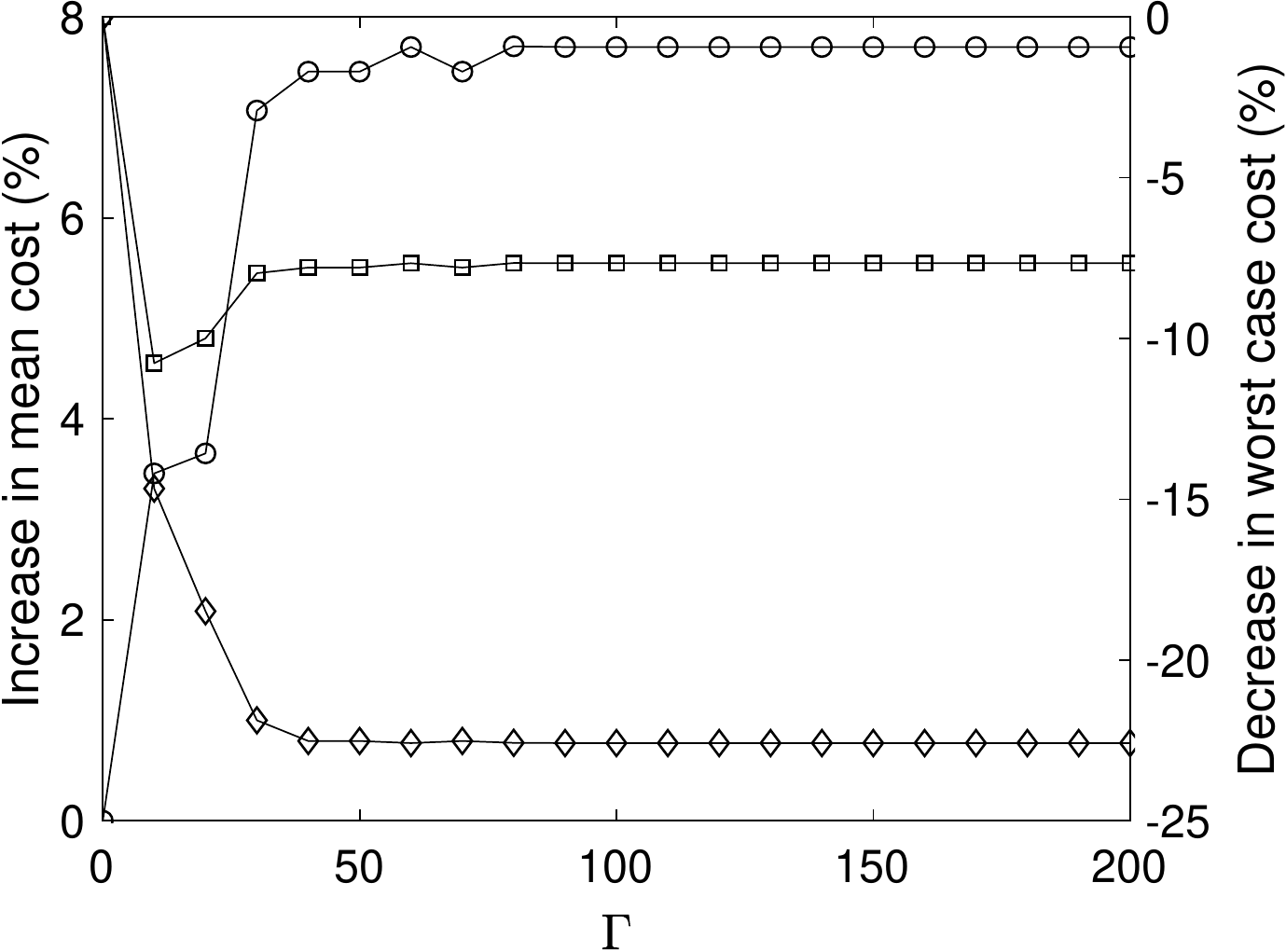}}
    \subfloat[PM peak]{\includegraphics[width=0.25\linewidth]{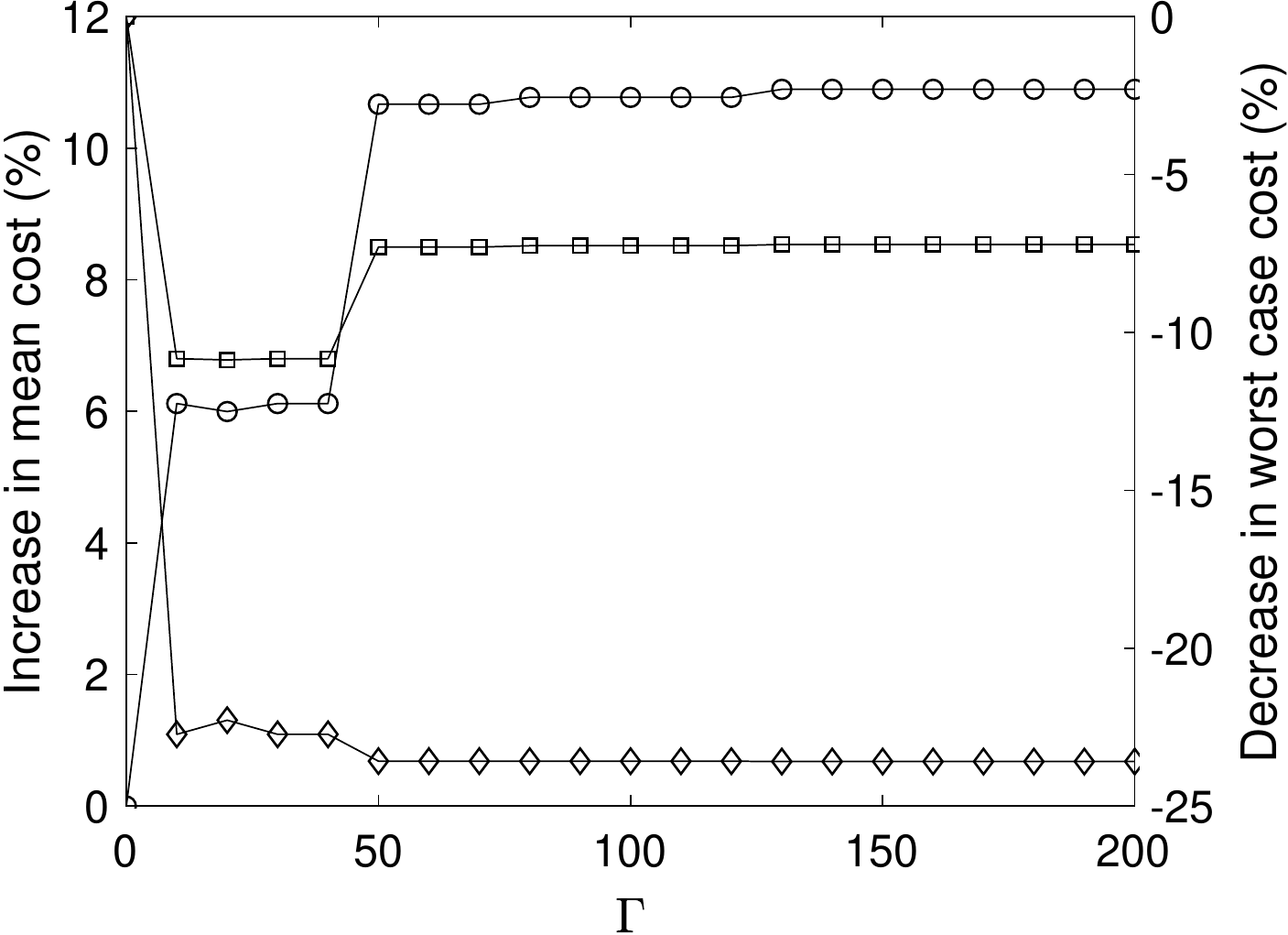}}
    \subfloat[Off-peak]{\includegraphics[width=0.25\linewidth]{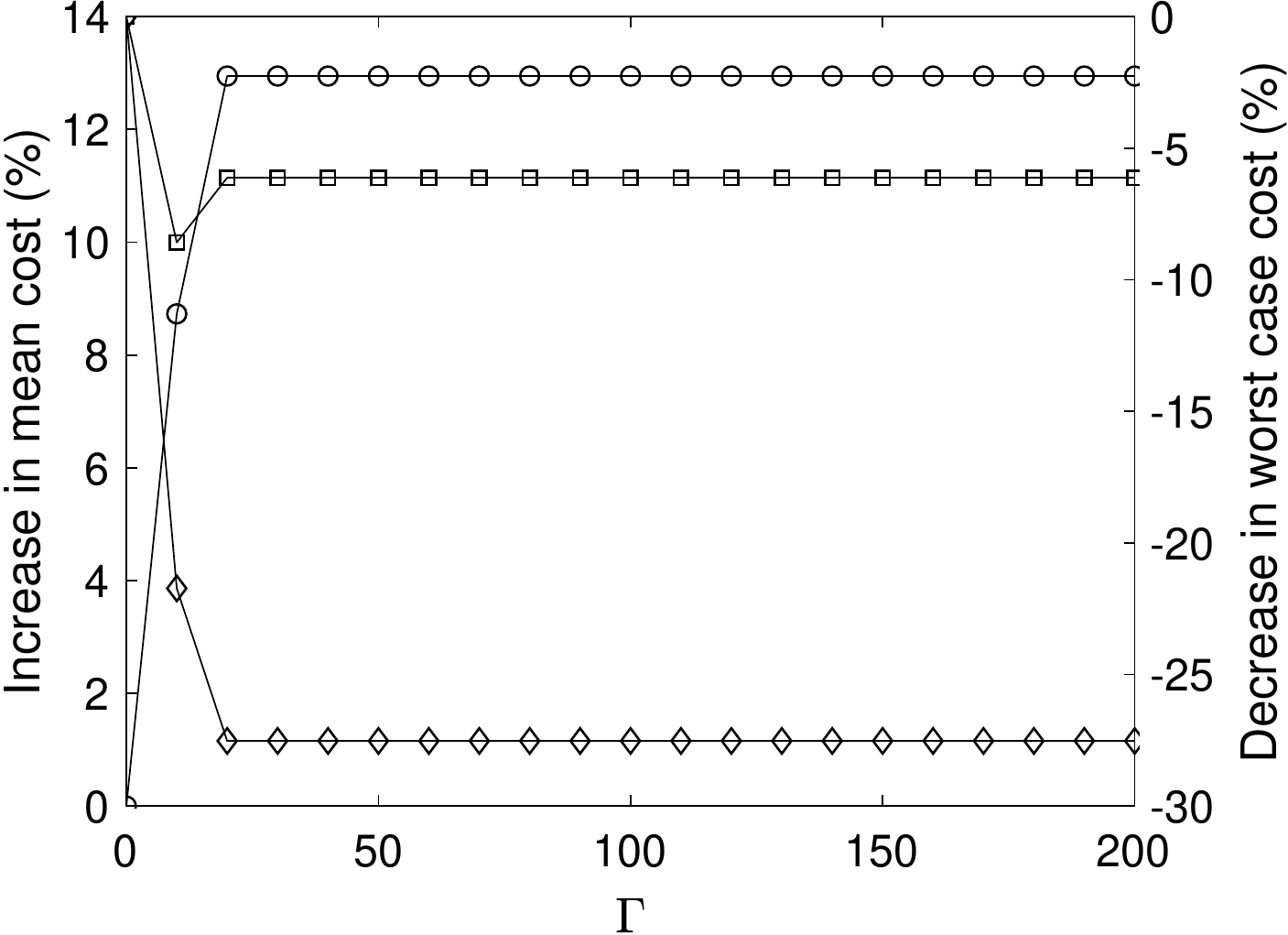}}
    \subfloat[Night time]{\includegraphics[width=0.25\linewidth]{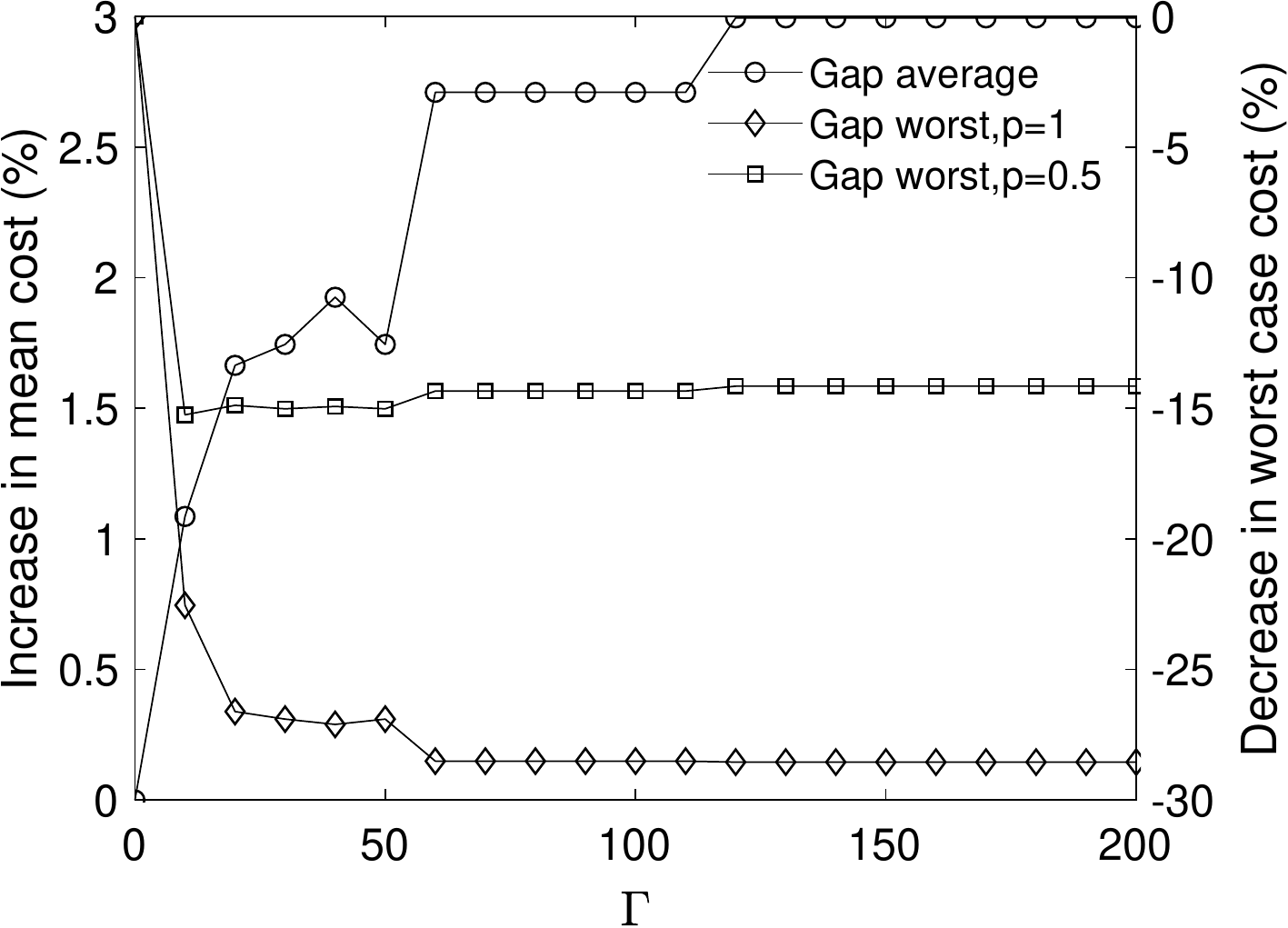}}
    \caption{Change of objective function with respect to different demand uncertainty}
    \label{fig:gap_cost}
\end{figure}

\begin{table}[h!]
\caption{Change of loss of service and operated number of vehicles under the different budget of uncertainty. (L represents the average case, $L_{0.5}$ and $L_1$ corresponds to the worst cases, and $N$ denotes the number of vehicles)}
\tiny
\label{tab:cost_analysis}
\begin{tabular}{lllllllllllllllll}
\toprule
      & \multicolumn{4}{l}{AM}                                           & \multicolumn{4}{l}{PM}    & \multicolumn{4}{l}{Off-peak} & \multicolumn{4}{l}{Night} \\ 
$\Gamma$ & N   & L    & \multicolumn{1}{l}{$L_{0.5}$} & \multicolumn{1}{l}{$L_{1}$} & N   & L    & $L_{0.5}$ & $L_{1}$    & N    & L     & $L_{0.5}$  & $L_{1}$    & N   & L    & $L_{0.5}$ & $L_{1}$    \\ 
\hline
0     & 146 & 1.69 & 6.51                      & 12.25                   & 158 & 0.55 & 5.8  & 11.49 & 157  & 1.97  & 7.38  & 15.29 & 138 & 2.96 & 8.93 & 16.47 \\
10    & 161 & 0.44 & 0.45                      & 4.76                    & 178 & 0    & 0    & 1.31  & 182  & 1.32  & 1.28  & 4.13  & 159 & 0.4  & 0.41 & 4.68  \\
20    & 158 & 1.4  & 1.47                      & 3.89                    & 177 & 0    & 0    & 1.48  & 198  & 0     & 0     & 0.04  & 161 & 0.4  & 0.41 & 2.8   \\
30    & 169 & 0.44 & 0.45                      & 1.09                    & 178 & 0    & 0    & 1.31  & 198  & 0     & 0     & 0.04  & 164 & 0    & 0    & 2.38  \\
40    & 171 & 0.44 & 0.45                      & 0.8                     & 178 & 0    & 0    & 1.31  & 198  & 0     & 0     & 0.04  & 165 & 0    & 0    & 2.27  \\
50    & 171 & 0.44 & 0.45                      & 0.8                     & 189 & 0    & 0    & 0     & 198  & 0     & 0     & 0.04  & 164 & 0    & 0    & 2.38  \\
60    & 172 & 0.44 & 0.45                      & 0.74                    & 189 & 0    & 0    & 0     & 198  & 0     & 0     & 0.04  & 167 & 0    & 0    & 1.48  \\
70    & 171 & 0.44 & 0.45                      & 0.8                     & 189 & 0    & 0    & 0     & 198  & 0     & 0     & 0.04  & 167 & 0    & 0    & 1.48  \\
80    & 172 & 0.44 & 0.45                      & 0.74                    & 190 & 0    & 0    & 0     & 198  & 0     & 0     & 0.04  & 167 & 0    & 0    & 1.48  \\
90    & 172 & 0.44 & 0.45                      & 0.74                    & 190 & 0    & 0    & 0     & 198  & 0     & 0     & 0.04  & 167 & 0    & 0    & 1.48  \\
100   & 172 & 0.44 & 0.45                      & 0.74                    & 190 & 0    & 0    & 0     & 198  & 0     & 0     & 0.04  & 167 & 0    & 0    & 1.48  \\
110   & 172 & 0.44 & 0.45                      & 0.74                    & 190 & 0    & 0    & 0     & 198  & 0     & 0     & 0.04  & 167 & 0    & 0    & 1.48  \\
120   & 172 & 0.44 & 0.45                      & 0.74                    & 190 & 0    & 0    & 0     & 198  & 0     & 0     & 0.04  & 168 & 0    & 0    & 1.41  \\
130   & 172 & 0.44 & 0.45                      & 0.74                    & 191 & 0    & 0    & 0     & 198  & 0     & 0     & 0.04  & 168 & 0    & 0    & 1.41  \\
140   & 172 & 0.44 & 0.45                      & 0.74                    & 191 & 0    & 0    & 0     & 198  & 0     & 0     & 0.04  & 168 & 0    & 0    & 1.41  \\
150   & 172 & 0.44 & 0.45                      & 0.74                    & 191 & 0    & 0    & 0     & 198  & 0     & 0     & 0.04  & 168 & 0    & 0    & 1.41  \\
160   & 172 & 0.44 & 0.45                      & 0.74                    & 191 & 0    & 0    & 0     & 198  & 0     & 0     & 0.04  & 168 & 0    & 0    & 1.41  \\
170   & 172 & 0.44 & 0.45                      & 0.74                    & 191 & 0    & 0    & 0     & 198  & 0     & 0     & 0.04  & 168 & 0    & 0    & 1.41  \\
180   & 172 & 0.44 & 0.45                      & 0.74                    & 191 & 0    & 0    & 0     & 198  & 0     & 0     & 0.04  & 168 & 0    & 0    & 1.41  \\
190   & 172 & 0.44 & 0.45                      & 0.74                    & 191 & 0    & 0    & 0     & 198  & 0     & 0     & 0.04  & 168 & 0    & 0    & 1.41  \\
200   & 172 & 0.44 & 0.45                      & 0.74                    & 191 & 0    & 0    & 0     & 198  & 0     & 0     & 0.04  & 168 & 0    & 0    & 1.41 \\
\bottomrule
\end{tabular}
\end{table}

As we increases the budget of uncertain from 0 to 10, we observe from Table~\ref{tab:cost_analysis} that there is at least a 10\% increase in the number of operated vehicles in all cases. And there may be 41 more vehicles in operation for $\Gamma=200$ than that of $\Gamma=0$ for the off-peak scenarios and the number is at least 26 more as in the AM scenario. Nevertheless, these significantly more number of vehicles in operation may only lead to an increase in average objective function value by 1-2\% for the night time period or by up to 12.95\% during the PM peak period. As a consequence, a majority of the increases in the operation cost are translated into the savings of loss of services and waiting time cost even during the normal demand scenarios. Specifically, we can verify from Table~\ref{tab:cost_analysis} that the additional operation costs contribute to the reduction of 0.55\%~2.96\% of unsatisfied passengers for normal conditions. And the benefits of the robust route scheduling is more evident if we look into the gains during the worst-case scenarios with both $p=0.5$ and $p=1$. In particular, Figure~\ref{fig:gap_cost}(d) suggests that a 2.7\% increase in daily operation costs (with $ \Gamma=60$) may reduce the cost during half demand variations by 14.3\% and by 28.5\% with full demand variations. More importantly, this will also translate into 1.48\% of unsatisfied passengers instead of 16.47\% during the full demand variation cases. Therefore, these findings serve as strong support for the adoption of the robust scheduling for the planning of HHMoD services, especially considering its potential to avoid drastic loss of passengers, which is crucial to the long-term sustainable operation of HHMoD. Even for the most conservative case ($\Gamma>=20$ for off-peak), we still observe a 6.1\% reduction in objective function value with half demand variations and a 27.53\% cost saving with the full demand variations during the off-peak hours. And this again can contribute to reducing the number of unsatisfied passengers from 15.29\% to only 0.04\% during the worst-case scenarios. One may also choose a less conservative strategy at $\Gamma=10$ where significant savings in worst-case scenarios can still be achieved. Consequently, we report that robust route scheduling does not lead to overly conservative solutions during normal operations. At the same time, we can achieve significant savings in terms of operation cost during worst-case scenarios and reductions in the rate of unserved passengers in all scenarios. And these properties make it an ideal tool to support the scheduling of the HHMoD services over a short-time period by offering robust operation strategies against the demand uncertainties. As a final remark, the staircase patterns for the metrics in Figure~\ref{fig:gap_cost} indicates that the performances of the robust route scheduling plateau after certain budget levels, which implies that the further increase in $\Gamma$ will not lead additional cost even during normal operations. In this regard, one may directly solve the single-stage problem with box uncertainty constraints ($\Gamma=200$) instead of the case with a medium level budget of uncertainty. This will obtain the same quality solution with minimum computational time, which could be a potential option if the robust route scheduling needs to be carried out in real-time.

\section{Conclusion}
In this study, we present the three-stage framework for optimal planning and scheduling of the HHMoD service at urban transportation hubs. The proposed framework consists of the route generation, the route combination, and the robust route schedule optimization. We develop efficient and effective route generation algorithms with connectivity to existing public transportation systems to generate the high-quality candidate route set. And a two-stage robust optimization model is developed to find the vehicle and route headway configurations that are resilient to the worst-case realization of passenger demand distributions. We conduct comprehensive numerical experiments for planning the HHMoD at the JFK airport in NYC. To best mimic the real-world settings, we use NYC taxi and FHV data to calibrate the potential passenger demand and use GoogleMap API to obtain the corresponding road traffic information. The results highlight the effectiveness of the proposed route generation algorithms for the HHMoD service, with 5 top candidates route being able to cover nearly 70\% of total passenger demand and fewer than 30 routes are needed to meet all demand across the 100 stops. The results also demonstrate the cost-effectiveness of the robust route schedules during the average demand conditions and its superior performances in reducing the unsatisfied passengers during both normal and worst-case conditions. Moreover, we find that the quality of the generated routes and the performances of the robust route schedules are consistent across different times of the day. 

There are several future directions to extend the scope of our study further. First, as our case study concerns the HHMoD services at the JFK airport, it will be interesting to examine the effectiveness of the solution framework at other activity hubs located in the central urban areas and test the performances in a different city. Second, while the proposed algorithms only consider the given travel time information, the exact and heuristic algorithms may be modified to incorporate the structural properties of the underlying service network (e.g. grid network or the planar graph) and design specific algorithms that can obtain optimal solutions more efficiently. Specifically, as the longest path problem is a special case of our problem, polynomial algorithms may exist on certain types of networks~\cite{ioannidou2009longest}. Finally, the robust scheduling problem can be further extended to account for the fleet with heterogeneous vehicles. This will contribute to the more efficient use of available resources and provide even less conservative operation strategies.


\bibliographystyle{unsrt}
\bibliography{sample}

\begin{thebibliography}{10}

\bibitem{zhang2010can}
Ming Zhang.
\newblock Can transit-oriented development reduce peak-hour congestion?
\newblock {\em Transportation Research Record}, 2174(1):148--155, 2010.

\bibitem{nycMTA2018}
Metropolitan~Transportation Authority.
\newblock {New York City} transit ridership trend, 2018.

\bibitem{nycfhv2019}
{New York City Taxi and Limousine Commission and Department of Transportation}.
\newblock Improving efficiency and managing growth in {New York's} for-hire
  vehicle sector, 2019.

\bibitem{qian2020impact}
Xinwu Qian, Tian Lei, Jiawei Xue, Zengxiang Lei, and Satish~V Ukkusuri.
\newblock Impact of transportation network companies on urban congestion:
  Evidence from large-scale trajectory data.
\newblock {\em Sustainable Cities and Society}, 55:102053, 2020.

\bibitem{malucelli1999demand}
Federico Malucelli, Maddalena Nonato, and Stefano Pallottino.
\newblock Demand adaptive systems: some proposals on flexible transit.
\newblock In {\em Operational research in industry}, pages 157--182. Springer,
  1999.

\bibitem{transitglobal}
Val{\'e}rie Guihaire and Jin-Kao Hao.
\newblock Transit network design and scheduling: A global review.
\newblock {\em Transportation Research Part A: Policy and Practice},
  42(10):1251--1273, 2008.

\bibitem{trn2009}
Konstantinos Kepaptsoglou and Matthew Karlaftis.
\newblock Transit route network design problem.
\newblock {\em Journal of transportation engineering}, 135(8):491--505, 2009.

\bibitem{trn2013}
Reza~Zanjirani Farahani, Elnaz Miandoabchi, Wai~Yuen Szeto, and Hannaneh
  Rashidi.
\newblock A review of urban transportation network design problems.
\newblock {\em European Journal of Operational Research}, 229(2):281--302,
  2013.

\bibitem{lampkin1967}
W~Lampkin and PD~Saalmans.
\newblock The design of routes, service frequencies, and schedules for a
  municipal bus undertaking: A case study.
\newblock {\em Journal of the Operational Research Society}, 18(4):375--397,
  1967.

\bibitem{Silman1974}
Lionel~Adrian Silman, Zeev Barzily, and Ury Passy.
\newblock Planning the route system for urban buses.
\newblock {\em Computers \& operations research}, 1(2):201--211, 1974.

\bibitem{cipriani2012}
Ernesto Cipriani, Stefano Gori, and Marco Petrelli.
\newblock Transit network design: A procedure and an application to a large
  urban area.
\newblock {\em Transportation Research Part C: Emerging Technologies},
  20(1):3--14, 2012.

\bibitem{bee2013}
Milo{\v{s}} Nikoli{\'c} and Du{\v{s}}An Teodorovi{\'c}.
\newblock Transit network design by bee colony optimization.
\newblock {\em Expert Systems with Applications}, 40(15):5945--5955, 2013.

\bibitem{drezner2002}
Zvi Drezner and Said Salhi.
\newblock Using hybrid metaheuristics for the one-way and two-way network
  design problem.
\newblock {\em Naval Research Logistics (NRL)}, 49(5):449--463, 2002.

\bibitem{fan2004optimal}
Wei Fan.
\newblock {\em Optimal transit route network design problem: Algorithms,
  implementations, and numerical results}.
\newblock The University of Texas at Austin, 2004.

\bibitem{genetic2003}
Partha Chakroborty.
\newblock Genetic algorithms for optimal urban transit network design.
\newblock {\em Computer-Aided Civil and Infrastructure Engineering},
  18(3):184--200, 2003.

\bibitem{quadrifoglio2008simulation}
Luca Quadrifoglio, Maged~M Dessouky, and Fernando Ord{\'o}{\~n}ez.
\newblock A simulation study of demand responsive transit system design.
\newblock {\em Transportation Research Part A: Policy and Practice},
  42(4):718--737, 2008.

\bibitem{crainic2012designing}
Teodor~Gabriel Crainic, Fausto Errico, Federico Malucelli, and Maddalena
  Nonato.
\newblock Designing the master schedule for demand-adaptive transit systems.
\newblock {\em Annals of Operations Research}, 194(1):151--166, 2012.

\bibitem{nourbakhsh2012structured}
Seyed~Mohammad Nourbakhsh and Yanfeng Ouyang.
\newblock A structured flexible transit system for low demand areas.
\newblock {\em Transportation Research Part B: Methodological}, 46(1):204--216,
  2012.

\bibitem{errico2013survey}
Fausto Errico, Teodor~Gabriel Crainic, Federico Malucelli, and Maddalena
  Nonato.
\newblock A survey on planning semi-flexible transit systems: Methodological
  issues and a unifying framework.
\newblock {\em Transportation Research Part C: Emerging Technologies},
  36:324--338, 2013.

\bibitem{li2010feeder}
Xiugang Li and Luca Quadrifoglio.
\newblock Feeder transit services: choosing between fixed and demand responsive
  policy.
\newblock {\em Transportation Research Part C: Emerging Technologies},
  18(5):770--780, 2010.

\bibitem{cortes2002design}
Cristi{\'a}n~E Cort{\'e}s and R~Jayakrishnan.
\newblock Design and operational concepts of high-coverage point-to-point
  transit system.
\newblock {\em Transportation Research Record}, 1783(1):178--187, 2002.

\bibitem{chen2017analysis}
Peng~Will Chen and Yu~Marco Nie.
\newblock Analysis of an idealized system of demand adaptive paired-line hybrid
  transit.
\newblock {\em Transportation Research Part B: Methodological}, 102:38--54,
  2017.

\bibitem{luo2020paired}
Sida Luo and Yu~Marco Nie.
\newblock Paired-line hybrid transit design considering spatial heterogeneity.
\newblock {\em Transportation Research Part B: Methodological}, 132:320--339,
  2020.

\bibitem{pei2019real}
Mingyang Pei, Peiqun Lin, and Junfeng Ou.
\newblock Real-time optimal scheduling model for transit system with flexible
  bus line length.
\newblock {\em Transportation Research Record}, 2673(4):800--810, 2019.

\bibitem{chen2013b}
Chao Chen, Daqing Zhang, Zhi-Hua Zhou, Nan Li, T{\"u}lin Atmaca, and Shijian
  Li.
\newblock B-planner: Night bus route planning using large-scale taxi gps
  traces.
\newblock In {\em 2013 IEEE international conference on pervasive computing and
  communications (PerCom)}, pages 225--233. IEEE, 2013.

\bibitem{pinelli2016data}
Fabio Pinelli, Rahul Nair, Francesco Calabrese, Michele Berlingerio, Giusy
  Di~Lorenzo, and Marco~Luca Sbodio.
\newblock Data-driven transit network design from mobile phone trajectories.
\newblock {\em IEEE Transactions on Intelligent Transportation Systems},
  17(6):1724--1733, 2016.

\bibitem{kuhn1955hungarian}
Harold~W Kuhn.
\newblock The hungarian method for the assignment problem.
\newblock {\em Naval research logistics quarterly}, 2(1-2):83--97, 1955.

\bibitem{furth1981setting}
Peter~G Furth and Nigel~HM Wilson.
\newblock Setting frequencies on bus routes: Theory and practice.
\newblock {\em Transportation Research Record}, 818(1981):1--7, 1981.

\bibitem{gkiotsalitis2017exact}
Konstantinos Gkiotsalitis and Oded Cats.
\newblock Exact optimization of bus frequency settings considering demand and
  trip time variations.
\newblock In {\em Proceedings of the Transportation Research Board 96th Annual
  Meetings}, number 17-01871, 2017.

\bibitem{bertsimas2003robust}
Dimitris Bertsimas and Melvyn Sim.
\newblock Robust discrete optimization and network flows.
\newblock {\em Mathematical programming}, 98(1-3):49--71, 2003.

\bibitem{bertsimas2012adaptive}
Dimitris Bertsimas, Eugene Litvinov, Xu~Andy Sun, Jinye Zhao, and Tongxin
  Zheng.
\newblock Adaptive robust optimization for the security constrained unit
  commitment problem.
\newblock {\em IEEE transactions on power systems}, 28(1):52--63, 2012.

\bibitem{zeng2013solving}
Bo~Zeng and Long Zhao.
\newblock Solving two-stage robust optimization problems using a
  column-and-constraint generation method.
\newblock {\em Operations Research Letters}, 41(5):457--461, 2013.

\bibitem{an2014reliable}
Yu~An, Bo~Zeng, Yu~Zhang, and Long Zhao.
\newblock Reliable p-median facility location problem: two-stage robust models
  and algorithms.
\newblock {\em Transportation Research Part B: Methodological}, 64:54--72,
  2014.

\bibitem{taxidata2018}
2018 {NYC} {TLC} trip record data, accessed May, 2020.
\newblock Available online at
  \url{https://www1.nyc.gov/site/tlc/about/tlc-trip-record-data.page}.

\bibitem{ioannidou2009longest}
Kyriaki Ioannidou, George~B Mertzios, and Stavros~D Nikolopoulos.
\newblock The longest path problem is polynomial on interval graphs.
\newblock In {\em International Symposium on Mathematical Foundations of
  Computer Science}, pages 403--414. Springer, 2009.

\bibitem{yen1971finding}
Jin~Y Yen.
\newblock Finding the k shortest loopless paths in a network.
\newblock {\em Management Science}, 17(11):712--716, 1971.

\end{thebibliography}
\section*{Appendix}
In the following, we summarize the implementation details for the shortest-path based heuristic approach. 
\subsubsection*{Initial route generation}
The initial candidate routes are created by finding $k$-shortest routes from each stop $i$ to hubs ~\cite{yen1971finding}. 
The process results in the candidate path set $R^h$ with $k|N|$ candidate routes and we denote the stops that each $r\in R^h$ traverses through as $N^r$. Since there may be multiple candidate routes that pass stop $i$, we then perform passenger demand assignment based on travel time:
\begin{equation}
    P_{i}^r = \frac{e^{-t_{i}^r}}{\sum_{r'\in R_i^h}e^{-t_{i}^{r'}}},  \forall\,i \in N, r \in R_{i}^{h},
\end{equation}\\
where $t_{i}^{r}$ is the route travel time if passengers at stop $i$ take route $r$ and $R_i^h$ is the set of heuristically generated routes that traverse from stop $i$. We can therefore measure the relative importance of each stop and each route as stop weight $W_{i}$ and route weight $W^{r}$ ($\mathbbm{1}_i^r$ denotes whether route $r$ travels by stop $i$):
\begin{equation}
    W_i=\sum_{j\in N} \sum_{r\in R_j^h} \mathbbm{1}_i^r q_jP_{j}^r,\quad  W^r =\sum_{i\in N^r }W_i
\end{equation}
\subsubsection*{Candidate route expansion}
The generated candidate routes in $R^h$ may be further combined to create new routes that extend the coverage of passenger demand. We consider expanding two routes $r_1,r_2\in R^h$ following two rules: 1) if $r_1$ is a sub-route of $r_2$ where $N_{r_1}^h\subseteq N_{r_2}^h$ and 2) if $r_1$ and $r_2$ share common segments. The route expansion is then performed based on crossover operation over the actual link segments of the two routes following Algorithm~\ref{alg:crossover}. And the weight of the expanded route is also calculated by the stop weight $W_{i}$. 
\begin{algorithm}[H]
\begin{algorithmic}[1]
\Require
$r_{1}=\{l_{1},l_{2},...,l_{L}\} $(link segments), 
$r_{2}=\{p_{1},p_{2},...,p_{P}\}$ (link segments),     
$N_{r_1}^h, N_{r_2}^h, G\leftarrow \emptyset$.
\For{$l_{i}=p_{j}$ ($l_i\in r_{1}$, $p_j\in r_{2}$)}
    \If{$N_{r_1}^h \subseteq N_{r_2}^h$}
    \State $\bar{r}\leftarrow \{l_1,l_2,...l_i,p_{j+1},...,p_{P}\}, G\leftarrow G\cup {\bar{r}}$
    \Else
    \State $\bar{r}_{1}$=$\{l_1,l_2,...,l_i,p_{j+1},...,p_{P}\}$, $\bar{r}_{2}$=$\{p_1,p_2,...,p_j,l_{i+1},...,l_{L}\}$, $G \leftarrow G \bigcup \{\bar{r}_{1},\bar{r}_{2}\}$
    \EndIf
\EndFor
\State \Return {$G$} 
\end{algorithmic}
\caption{Candidate Routes Extension}
\label{alg:crossover}
\end{algorithm}

\subsubsection*{Route pruning}
The set of candidate routes may grow rapidly after the route expansion. 
To eliminate unnecessary routes, we conduct route pruning following two metrics as in~\cite{pinelli2016data}. The first metric is route circuity (total route length divided by the euclidean distance). 
We choose routes with length longer than $l_{thd}$, and circuity smaller than $C_{thd}$. The second metric is the subset relationship. If a route is a sub-route of the other route with a smaller length, then the shorter route will be discarded. 
The last metric is route similarity. It quantifies the spatial adjacency of two routes:
\begin{equation}
    S(r_1,r_2) = \frac{\sum_{l\in r_1} \min_{p\in r_2}dist(l,p)+\sum_{p\in r_2} \min_{l\in r_1}dist(p,l)}{L_{r_1}+L_{r_2}}
\end{equation}
where $L_r$ is the length of route $r$ and $dist(p,l)$ is the euclidean distance between the center of two road segments $p$ and $l$. To prune similar routes, we first sort the weight of the routes in $R^h$ in descending order:
\begin{equation}
    W_{i=1}\geq W_{i=2} \geq W_{i=3} \geq ... \geq W_{i=|R^{h}|}
\end{equation}
We then remove the following set of routes from $R^h$ based on route similarity threshold $S_{thd}$:
\begin{equation}
    \{r_{i}| \exists 1 \leq j \leq i-1, S(r_{i},r_{j}) < S_{thd}\}
\end{equation}
In this manner, it removes the repetitive routes with lower weight (importance). 







\end{document}